\documentclass[11pt, reqno]{amsart}

\allowdisplaybreaks
\usepackage{color}
\usepackage{amssymb,amsthm,amsmath,bm}
\usepackage[numbers,sort&compress]{natbib}
\usepackage{xcolor}
\usepackage[unicode,breaklinks=true,colorlinks=true]{hyperref}
\linespread{1.35}

\textwidth 16.00cm \textheight 24.5cm \topmargin -1.2cm \oddsidemargin 0.0cm \evensidemargin 0.0cm
\parskip0.02cm
\setcounter{section}{0}
\setcounter{figure}{0}
\setcounter{equation}{0}

\title[ ]{Global Regularity of weak solutions to the generalized Leray equations and its applications}

\author[B. Lai]{Baishun Lai }

\address{LCSM (MOE) and School of Mathematics and Statistics, Hunan Normal University, Changsha 410081, Hunan,China}

\email{laibaishun@hunnu.edu.cn}

\author[C. Miao]{Changxing Miao}

\address{Institute of Applied Physics and Computational Mathematics, P.O. Box 8009, Beijing 100088, P.R. China }

\email{miao\_{}changxing@iapcm.ac.cn}

\author[X. Zheng]{Xiaoxin Zheng}

\address{ School of Mathematics and Systems Science, Beihang University, Beijing 100191, P.R. China}

\email{xiaoxinzheng@buaa.edu.cn}


\keywords{}
 \date{\today}


\theoremstyle{plain}
\newtheorem{corollary}{Corollary}[section]
\newtheorem{theorem}{Theorem}[section]
\newtheorem{lemma}{Lemma}[section]
\newtheorem{proposition}{Proposition}[section]

\theoremstyle{definition}
\newtheorem{definition}{Definition}[section]

\newtheorem{remark}{Remark}[section]

\def\R{\mathbb R}

\newcommand{\beq}{\begin{equation}}
\newcommand{\eeq}{\end{equation}}
\newcommand{\ben}{\begin{eqnarray}}
\newcommand{\een}{\end{eqnarray}}
\newcommand{\beno}{\begin{eqnarray*}}
\newcommand{\eeno}{\end{eqnarray*}}
 \newcommand{\ba}{\begin{aligned}}
 \newcommand{\ea}{\end{aligned}}

\begin{document}

\subjclass[2000]{35Q30; 35B40;  76D05.}
 \keywords{Global regularity; fractional difference quotient; optimal decay estimates; self-similar solution}

\begin{abstract}
We investigate a regularity for weak solutions of  the following generalized Leray equations
\begin{equation*}
(-\Delta)^{\alpha}V- \frac{2\alpha-1}{2\alpha}V+V\cdot\nabla V-\frac{1}{2\alpha}x\cdot \nabla V+\nabla P=0,
\end{equation*}
which arises from the study of self-similar solutions to the generalized Naiver-Stokes equations in $\mathbb R^3$.
Firstly, by making use of the vanishing viscosity and developing non-local effects of the fractional diffusion operator, we prove uniform estimates for weak solutions $V$ in the weighted Hilbert space $H^\alpha_{\omega}(\R^3)$. Via the differences characterization of Besov spaces and the bootstrap argument, we  improve the regularity for weak solution from $H^\alpha_{\omega}(\R^3)$ to $H_{\omega}^{1+\alpha}(\R^3)$.
This regularity result, together linear theory for the non-local Stokes system, lead to pointwise estimates
of $V$ which allow us to obtain a natural pointwise property of the self-similar solution constructed in \cite{LXZ}. In particular, we obtain an optimal decay estimate of the self-similar solution to the classical Naiver-Stokes equations by means of the special structure of Oseen tensor.
 This answers the question proposed by  Tsai \cite[Comm. Math. Phys., 328 (2014), 29-44]{T}.

\end{abstract}
\maketitle

\section{Introduction and main results}

Several works have been devoted to  the study of the existence  and the partial regularity of weak solutions to the  three dimensional incompressible Navier-Stokes equations with the fractional diffusion in  $\R^{3}\times (0,+\infty)$
\begin{align}\label{NS}
\left\{
\begin{aligned}
&u_{t}+(-\Delta)^{\alpha} u+u\cdot\nabla u+\nabla p=0, \\
&\mbox{div}\, u=0,\\
\end{aligned} \right.
\end{align}
complemented with the initial condition
\begin{equation}\label{NS-i}
u(\cdot,0)=u_{0}\ \ \ \ \ \mbox{in}\ \ \ \R^{3}.
\end{equation}
When $0<\alpha<1$, the fractional Laplacian $(-\Delta)^{\alpha}$ corresponds to the L\'{e}vy operator
$$
(-\Delta)^{\alpha} u(x)\triangleq c_{\alpha} {\rm p.v.}\int_{\R^{3}}\frac{u(x)-u(z)}{|x-z|^{3+2\alpha}}\;{\rm d}z,
\;\;\; c_{\alpha}
=\frac{\alpha(1-\alpha)4^{\alpha}\Gamma(\frac{3}{2}+\alpha)}{\Gamma(2-\alpha)\pi^{\frac{3}{2}}}.$$
From the stochastic process point of view, the fractional Laplacian
$(-\Delta)^{\alpha}$ is an infinitesimal generator of the spherically
symmetric $2\alpha$-stable L\'{e}vy process, denoted by $X_{t}$,
which possesses the self-similarity properties such that
$$
X_{1}=t^{-\frac{1}{2\alpha}}X_{t}\qquad \ \ \mbox{for all}\ \ t>0.
$$

In the framework of stochastic process,  \eqref{NS} can be deduced  via the following stochastic representation
\cite{Cons,Zhang}
\begin{equation*}
\left \{\begin{aligned}
&X_{t}(x)=x+\int_{0}^{t}u(X_{s}(x),s)\,{\rm d}s+\sqrt{2}{\rm B}_{t},\quad\; t\geq0, \\
&u(\cdot,t)=\mathbb{P}\mathbb{E}\left[(\nabla^{t}X_{t}^{-1})(u(\cdot,0)\circ X_{t}^{-1})\right],
\end{aligned}\right.
\end{equation*}
where $\textrm{B}_{t}$ is a three-dimensional Brownian motion,
$X_{t}^{-1}(x)$ denotes the inverse of the mapping $x\rightarrow X_{t}$, and
$$\mathbb P={\rm Id}-\nabla (-\Delta)^{-1}{\rm div}$$
is called the Helmholtz projection onto the divergence-free vector fields.

System \eqref{NS} was first proposed by Frisch-Lesieur-Brissaud
\cite{Fr} in the study of Markovian random coupling model for turbulence,
and it  was used to describe a fluid motion with
internal friction interaction by physicists in \cite{Me}. From the viewpoint of PDEs, the fractional  Navier-Stokes equations \eqref{NS}  corresponds to
 a hyper-dissipative (or hypo-dissipative) model for the case of $\alpha>1$ (or
$\alpha<1$).  For $\alpha\geq \frac{5}{4}$, the system \eqref{NS}-\eqref{NS-i}  is subcritical under the energy scaling,  and both in $\R^{3}$ and in a periodic setting $\mathbb{T}^{3}$, has been proved to be global  well-posedness for smooth initial data with decay in infinite, for details see \cite{Lions,Mat}.
For  $\alpha\in [0,\frac{5}{4})$, the global well-posedness of smooth solution for \eqref{NS}-\eqref{NS-i} is still an open problem.
 Usually the $\alpha=\frac{5}{4}$ is  called Lions' critical exponent. For some hypo-dissipative model,   Colombo-De Lellis-De Rosa in \cite{Colombo}, by making use of   the  convex integration methods introduced by De Lellis-Szekelyhidi Jr. in \cite{De Lellis},   shown the ill-posedness of Leray-Hopf solutions to hypodissipative Navier-Stokes system \eqref{NS}-\eqref{NS-i} with $\alpha<\frac{1}{5}$ under the  periodic setting $\mathbb{T}^{3}$.  Subsequently,  De Rosa \citep{Rosa} improved Colombo-De Lellis-De Rosa's results by extending  $\alpha<1/5$ to $\alpha<1/3$. However, for the dissipative model in range $\alpha\in [\frac{1}{3},\frac{5}{4})$, the uniqueness of the energy week solution  for \eqref{NS}-\eqref{NS-i} is still an open problem. Besides,
 an analogue of the Caffarelli-Kohn-Nirenberg result was established in \cite{Colombo-1, Katz} for the hyper-dissipative range $\alpha\in (1,\frac{5}{4})$, and in \cite{Tang} for the hypo-dissipative range $\alpha\in(\frac{3}{4},1)$.

The motivation of the present paper arises from  the study on the regularity and asymptotics   of a velocity field $u(x,t)$ and pressure $p(x,t)$ of  self-similar form:
$$
u(x,t)= \lambda^{2\alpha-1}(t)U\big(\lambda(t)x\big),\ \  p(x,t)=\lambda^{2(2\alpha-1)}(t)P\big(\lambda(t)x\big).
$$
where $\lambda(t)=t^{-\frac{1}{2\alpha}}$ with  $t>0$ (or $\lambda(t)=(-t)^{-\frac{1}{2\alpha}}$ with  $t<0$)
associate with  forward self-similar solutions (or backward self-similar solutions, resp.) to system \eqref{NS}.  One easily verifies that the profile pair $(U,P)$  fulfills either the generalized Leray equations in $\R^{3}$
\begin{align}\label{E-forwad}
\left\{
\begin{aligned}
&(-\Delta)^{\alpha}U- \frac{2\alpha-1}{2\alpha}U-\frac{1}{2\alpha}x\cdot \nabla U+\nabla P+U\cdot\nabla U=0 \\
&\mbox{div}\, U=0\\
\end{aligned}\ \right.
\end{align}
or
\begin{align}\label{E-back}
\left\{
\begin{aligned}
&(-\Delta)^{\alpha}U+\frac{2\alpha-1}{2\alpha}U+\frac{1}{2\alpha}x\cdot \nabla U+\nabla P+U\cdot\nabla U=0 \\
&\mbox{div}\, U=0\\
\end{aligned}\ \right..
\end{align}

For the classical Naiver-Stokes equations \eqref{NS}, the question on the existence of backward self-similar solutions was already raised by Leray \cite{Ler} in 1934 who observed that if such a nontrivial solution exists then it would necessarily lead to the phenomenon of finite-time blow up.
Ne\v{c}as-R{\aa u}\v{z}i\v{c}ka-\v{S}ver\'{a}k \cite{NRS}  proved that any $U\in L^{3}(\R^{3})$ which solves \eqref{E-back} has to vanish. Later, this celebrated  result was extended to $U\in L^{q}(\R^{3})$ for some $q\in(3,+\infty)$ by Tsai~\cite{Tsai}. However, the story on  forward self-similar solutions is different.
Using harmonic analysis method together with  perturbation argument,  Cannone-Meyer-Planchon \cite{Cannon-Meyer-Planchon, C2} firstly proved the  existence and uniqueness of the small forward self-similar solutions in the framework of  homogeneous Besov spaces, see also Koch and Tataru \cite{Koch} in $\text{BMO}^{-1}(\R^3)$.
It is necessary to point out that the perturbation argument can not work for large forward self-similar solutions.
  Jia and \v{S}ver\'{a}k \cite{JS} constructed a scale-invariant solution with large initial values by developing so
  called local-in-space regularity near the initial time.

In this paper, we mainly focus on  the regularity and asymptotics  of the forward self-similar solutions
 of system \eqref{NS}. Due to singularity arising from  self-similarity, we can not directly construct a solution to \eqref{E-forwad} in the Sobolev spaces. Thus, one decomposes
$$
U=U_{0}+V
$$
and considers the difference part $V$, which  satisfies in $\R^{3}$
\begin{align}\label{E}
\left\{
\begin{aligned}
&(-\Delta)^{\alpha}V- \frac{2\alpha-1}{2\alpha}V(x)-\frac{1}{2\alpha}x\cdot \nabla V+\nabla P=F_{0}+F_{1}(V), \\
&\textnormal{div}\,V=0,
\end{aligned}\ \right. ,
\end{align}
where
\begin{equation*}\left\{
\begin{aligned}&F_{0}=-U_{0}\cdot\nabla U_{0},\ \  F_{1}(V)=-(U_{0}+V)\cdot\nabla V-V\cdot\nabla U_{0},\\
&U_{0}=e^{-(-\Delta)^{\alpha}}u_{0}.
\end{aligned}\right.\qquad\end{equation*}
The readers will find that compared with nonlinearity $F_{1}(V)$, the force $F_{0}$ plays a dominant role in  the study of the decay estimates of $V$.

It is clear that finding a forward self-similar solution to system \eqref{NS}-\eqref{NS-i} is equivalent to prove the existence of the profile $V(x)$ to \eqref{E}.
For $\alpha=1$, Jia and \v{S}ver\'{a}k in \cite{JS} proved  \eqref{E} admits at least one solution $V(x)$
 by developing the local-in-space regularity near the initial time and the {\em topological degree} method under the framework of the weighted H\"{o}lder space.
 It is obvious from
 $$
 u(x,t)=\frac{1}{\sqrt{t}}(V+U_{0})\Big(\frac{x}{\sqrt{t}}\Big)
 $$
  that the asymptotic behaviour of $V$ at infinity is closely related to the regularity of $u$ near $|x|=1,t=0$.
  On the other hand, the local regularity of $u_{0}$ propagates for a short time, this fact helps us to obtain better  decay estimate of $V$ at infinite if $u_{0}$ is smooth in $\R^{3}\setminus\{0\}$, see \cite{JS}. That is to say, the asymptotics of $V$ at infinity essentially depend on the regularity of the initial data $u_{0}(x)$. To be precise, Jia and \v{S}ver\'{a}k in \cite{JS} proved for $0<\gamma<1$
\begin{equation}\label{add-01}
|V|(x)\lesssim \left \{
\begin{aligned}
&(1+|x|)^{-2}\ \ \ &\mbox{if}\ \ u_{0}(x)\in C_{{\rm loc}}^{\gamma}(\R^{3}\setminus\{0\}),\\
&(1+|x|)^{-3}\log(2+|x|) \ \ \  &\mbox{if}\ \ u_{0}(x)\in C_{{\rm loc}}^{1,\gamma}(\R^{3}\setminus\{0\}),\\
&(1+|x|)^{-3} \ \ \  &\mbox{if}\ \ u_{0}(x)\in C_{{\rm loc}}^{3,\gamma}(\R^{3}\setminus\{0\}),
\end{aligned}
\right.
\end{equation}
 see also \cite{T}.  When $u_{0}(x)\in C_{{\rm loc}}^{1,\gamma}(\R^{3}\setminus\{0\})$, there is a logarithmic  loss caused by roughly potential estimates, to be more precise
$$
|V|(x)\leq C\int_{0}^{1}\int_{\R^{3}}\mathcal{O}(x-y,1-s)s^{-\frac{3}{2}}U_{0}\cdot\nabla U_{0}(y/\sqrt{s})\,{\rm d}y{\rm d}s
\leq C(1+|x|)^{-3}\log(2+|x|).
$$
When $u_{0}(x)$ is smooth enough away from 0, then  $V$  possesses the optimal pointwise decay
$$|V|\lesssim |U_{0}\cdot\nabla U_{0}|\sim (1+|x|)^{-3}, \qquad\quad \forall \;\; |x|\gg1,$$
by making use of  local-in-space regularity near the initial time. In this case, one can  obtain local regularity estimate
$$
\big\|\partial_{t}\big(u(x,t)-e^{t\Delta}u_{0}\big)\big\|_{L^{\infty}(\mathbb B_{1/2}(x_{0})\times[0,T])}\leq C(u_{0})
 $$
for some $T\ll1$ and $|x_{0}| >1$.  This estimate together  with  scaling invariance leads  to
 the optimal decay  without using potential estimates,
 and thus  avoid the logarithmic loss, for details  see \cite[Theorem 4.1]{JS}. It was asked in \cite{T} that one may be able to obtain the optimal decay  of $V$ under the low regularity condition on $u_{0}$.

Later, Korobkov-Tsai \cite{KT} used the blow-up argument to construct a solution to Leray equation \eqref{E} for $\alpha=1$.
The advantage of this method can be used to deal with the case of half-space, but this method
can not provide the pointwise decay estimates. Recently,  Lai-Miao-Zheng \cite{LXZ} used the $L^{2}$ weighted estimate to prove the solution constructed in \cite{KT} satisfying the pointwise decay.
   Besides, for the hypo-dissipative case, i.e., $\frac{5}{8}<\alpha<1$, the authors  proved an existence of  weak solution to \eqref{E} in Sobolev space  $H^{\alpha}(\R^{3})$, which is stated as follows:

\begin{theorem}[Theorem 3.6, \cite{LXZ}]\label{thm-1}
Let $\alpha\in(\frac58,1]$.	Then system \eqref{E} admits at least one weak  solution $V\in H^{\alpha}(\R^{3})$ defined (see  Definition \ref{def-weak} below) such that
$$
\|V\|_{H^{\alpha}(\R^{3})}\leq C(U_{0}).
$$
Moreover,  if $u_{0}\in C^{0,1}(\R^{3}\setminus\{0\})$, then for $\alpha=1$
\begin{equation}\label{add-02}
|V|(x)\leq C(1+|x|)^{-3}\log(2+|x|).
\end{equation}
\end{theorem}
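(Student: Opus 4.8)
The plan is to prove the two assertions by separate mechanisms: the existence-with-bound by a regularization/compactness scheme, and the pointwise decay by an Oseen-tensor representation combined with a bootstrap. For the existence of $V\in H^\alpha(\R^3)$ I would first regularize \eqref{E} — say by a Galerkin truncation onto finitely many divergence-free modes, or by solving a coercive approximate problem via the Leray--Schauder degree — so that each approximate level admits a solution $V_N$, and then establish a uniform a priori bound. Testing \eqref{E} against $V$ itself kills $\langle\nabla P,V\rangle$ by $\mathrm{div}\,V=0$, the dissipation yields $\|(-\Delta)^{\alpha/2}V\|_{L^2}^2$, and after one integration by parts $\langle x\cdot\nabla V,V\rangle=-\tfrac32\|V\|_{L^2}^2$, so the two linear lower-order terms combine into
\begin{equation*}
\Big(\tfrac{3}{4\alpha}-\tfrac{2\alpha-1}{2\alpha}\Big)\|V\|_{L^2}^2=\tfrac{5-4\alpha}{4\alpha}\|V\|_{L^2}^2,
\end{equation*}
which is a genuinely coercive (good) term for $\alpha\le 1$. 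Hence the left-hand side controls the full $H^\alpha$-norm, and it remains to absorb the forcing.

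The nonlinearity splits favourably: the trilinear term $\langle(U_0+V)\cdot\nabla V,V\rangle$ vanishes by the divergence-free cancellation $\int(U_0+V)\cdot\nabla|V|^2=0$, leaving only the coupling $\langle V\cdot\nabla U_0,V\rangle$ and the source $\langle F_0,V\rangle$ with $F_0=-\mathrm{div}(U_0\otimes U_0)$. Estimating $\int|V|^2|\nabla U_0|$ by H\"older and the Gagliardo--Nirenberg interpolation $\|V\|_{L^{r}}\lesssim\|V\|_{L^2}^{1-\theta}\|(-\Delta)^{\alpha/2}V\|_{L^2}^{\theta}$, and treating $F_0$ dually in $H^{-\alpha}$, one bounds these by $\varepsilon\|(-\Delta)^{\alpha/2}V\|_{L^2}^2+C(U_0)$ \emph{precisely when} the interpolation exponent satisfies $\theta<1$; this is exactly where the restriction $\alpha>\tfrac58$ enters, since it guarantees the integrability of $\nabla U_0$ and of $U_0\otimes U_0$ needed for the embeddings to close. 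Absorbing the $\varepsilon$-terms gives $\|V_N\|_{H^\alpha}\le C(U_0)$ uniformly. One then passes to the limit: weak convergence in $H^\alpha$ together with Rellich compactness on balls (with a diagonal extraction over an exhaustion of $\R^3$) handles the nonlinear term, and the pressure $P$ is recovered from the divergence-free constraint through the de Rham / Helmholtz decomposition. The delicate point here is that the coupling term is essentially critical at the lower end of the range, so the whole scheme hinges on $\alpha>\tfrac58$.

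For the decay when $\alpha=1$, I would pass through the time-dependent picture: $u(x,t)=t^{-1/2}(V+U_0)(x/\sqrt t)$ solves Navier--Stokes, so $u-e^{t\Delta}u_0=-\int_0^t e^{(t-s)\Delta}\mathbb P(u\cdot\nabla u)\,\dd s$, and evaluating at $t=1$ after rescaling yields the Oseen-tensor representation
\begin{equation*}
V(x)=-\int_0^1\!\!\int_{\R^3}\mathcal O(x-y,1-s)\,s^{-3/2}\big((V+U_0)\cdot\nabla(V+U_0)\big)(y/\sqrt s)\,\dd y\,\dd s,\qquad |\mathcal O(x,t)|\lesssim(|x|+\sqrt t)^{-3}.
\end{equation*}
Starting from $V\in H^1\hookrightarrow L^6\cap L^2$ (the $\alpha=1$ case of the first part), a bootstrap through this formula upgrades the decay of $V$. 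Since $u_0\in C^{0,1}(\R^3\setminus\{0\})$ forces $|U_0\cdot\nabla U_0|\lesssim(1+|x|)^{-3}$, the source $F_0$ dominates, and the borderline convolution $\int|x-y|^{-3}(1+|y|)^{-3}\,\dd y$ produces exactly the factor $(1+|x|)^{-3}\log(2+|x|)$, while the genuinely quadratic contributions decay at least as fast once the first rate is in hand. The main obstacle is twofold: closing the bootstrap so that the self-interaction $V\cdot\nabla V$ and the coupling $V\cdot\nabla U_0$ do not degrade the rate, and the logarithmic loss intrinsic to this critical potential estimate — indeed, removing that logarithm under weaker regularity is precisely the question of \cite{T} that the present paper sets out to resolve.
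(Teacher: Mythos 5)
The first and main assertion of the theorem is not reached by your argument. You reduce existence to a direct a priori estimate: after the cancellations you correctly identify (the trilinear term vanishes because $\mathrm{div}(U_0+V)=0$, and the linear terms combine into the coercive piece $\tfrac{5-4\alpha}{4\alpha}\|V\|_{L^2}^2$), you claim the remaining coupling $\int_{\R^3}V\cdot\nabla U_0\cdot V\,\dd x$ is absorbed as $\varepsilon\|\Lambda^\alpha V\|_{L^2}^2+C(U_0)$. It is not, for large data: H\"older, interpolation and Young give at best $\varepsilon\|\Lambda^\alpha V\|_{L^2}^2+C(\varepsilon)N(U_0)\|V\|_{L^2}^2$ with $N(U_0)$ a (large) norm of $\nabla U_0$, and the only term available to absorb $N(U_0)\|V\|_{L^2}^2$ is $\tfrac{5-4\alpha}{4\alpha}\|V\|_{L^2}^2$, whose constant is fixed and of order one; using Hardy's inequality instead ($|\nabla U_0|\lesssim\langle x\rangle^{-2\alpha}$) merely trades this for a large multiple of $\|V\|_{\dot H^\alpha}^2$, which is equally unabsorbable. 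Your scheme therefore closes only when $U_0$ is small, i.e.\ in the perturbative regime of \cite{Cannon-Meyer-Planchon}, whereas the theorem is a large-data statement. This is precisely why the proof in \cite{LXZ} (following \cite{JS,KT}) does not proceed by a direct energy estimate: the uniform bound $\|V\|_{H^\alpha}\le C(U_0)$ is obtained by a contradiction/blow-up argument together with a Liouville-type theorem for the limiting equation, and existence then follows from topological degree applied to the viscous approximation \eqref{Ev}. That blow-up mechanism is the missing idea, and without it the bound $\|V\|_{H^{\alpha}}\le C(U_0)$ is unproved.

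For the decay estimate at $\alpha=1$ your outline (Oseen representation, dominance of $F_0=-U_0\cdot\nabla U_0$, the borderline convolution producing the factor $(1+|x|)^{-3}\log(2+|x|)$) is the same mechanism the paper attributes to \cite{JS,LXZ} and reuses in Section 4, so the route is right in spirit. But the bootstrap cannot be started from $V\in H^1\hookrightarrow L^6\cap L^2$ alone: to dominate the quadratic contributions $V\cdot\nabla V$ and $V\cdot\nabla U_0$ in the Duhamel formula one first needs a seed pointwise decay of $V$, which in the paper and in \cite{LXZ} comes from a weighted $L^2$/Sobolev estimate (yielding initially $|V|\lesssim(1+|x|)^{-1/2}$, then $(1+|x|)^{-1}$, then $(1+|x|)^{-2}$ before the critical step). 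This second gap is reparable by inserting the weighted energy estimates; the first one is structural.
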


\vskip0.35cm

The keypoint of constructing  a solution  in \cite{LXZ} by making use of the {\em topological degree} theory
consists of  the blowup argument and viscous approximation method.
In contrary to the framework of weighted H\"{o}lder space used in \cite{JS}, Theorem \ref{thm-1} only provide uniformly $H^{\alpha}(\R^{3})$ bound of solution $V$,
  and donot provide any information on pointwise behavior for the case $\alpha<1$.  By the way,  the authors also established
  a higher local regularity of $V$ via the classical elliptic regularity theory and bootstrap technique in \cite{LXZ}.

The main goal includes two aspects, one is to establish the global regularity of the weak solution constructed in
Theorem \ref{thm-1} in weighted Sobolev space.  Another is to establish  the decay estimates of $V$ at infinity according to the global regularity established by the previous step.
 This helps us to recover the natural pointwise decay estimate of self-similar solutions to
generalized Navier-Stokes equations \eqref{NS}.
In particular, we obtain the optimal decay estimate of the self-similar solution to the
classical Naiver-Stokes equations under the lower regularity condition for $u_0$.

\begin{theorem}\label{thm1.2}
	    Let $\alpha\in(\frac56,1]$ and $V\in H_{\sigma}^{\alpha}(\R^{3})$ be the weak solution  established in Theorem~\ref{thm-1}.
		Then  $V\in H^{1+\alpha}_{\omega}(\R^3)$ $($ see Definition \ref{def-2} below$)$ satisfies
		\begin{equation*}
	\big	\|\sqrt{1+|\cdot|}V\big\|_{H^{1+\alpha}(\R^{3})}\leq C(U_{0}).
		\end{equation*}
 As a product, one has
 $$|V(x)|\leq  C(U_{0})(1+|x|)^{-\frac{1}{2}}.$$
 The Hilbert space  $H_{\sigma}^{s}(\R^{3})$ with $s\in\mathbb{R}$ consists of tempered distributions $u$ such that $\textnormal{div}\,u=0$, endowed with the following norm
 $$
 \|u\|^2 _{H_{\sigma}^s(\mathbb{R}^3)}= \int_{\mathbb{R}^3} \left(1+|\xi|^2\right)^s|\hat{u}(\xi)|^2\,\mathrm{d}\xi<\infty,
 $$
where  $\hat{u}$ denotes the Fourier transform of $u$.

 \end{theorem}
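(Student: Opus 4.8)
The plan is to follow the two-stage scheme announced in the abstract. First I would upgrade the plain $H^\alpha$ bound of Theorem~\ref{thm-1} to a \emph{weighted} bound $\|\sqrt{w}\,V\|_{H^\alpha(\R^3)}\le C(U_0)$ with the weight $w(x)=1+|x|$, and then run a finite–difference bootstrap to gain one full derivative and land in $H^{1+\alpha}_\omega(\R^3)$; the pointwise decay will come out of a weighted Sobolev embedding. For the weighted $H^\alpha$ estimate I would test \eqref{E} against $w\,V$. Integrating the drift term by parts and using $\textnormal{div}\,V=0$ (so that $\textnormal{div}(wx)=3+4|x|$) gives the coercive identity
$$\Big\langle -\tfrac{2\alpha-1}{2\alpha}V-\tfrac{1}{2\alpha}x\cdot\nabla V,\;wV\Big\rangle=\frac{5-4\alpha}{4\alpha}\int_{\R^3}|V|^2\dd x+\frac{3-2\alpha}{2\alpha}\int_{\R^3}|x|\,|V|^2\dd x,$$
where both coefficients are strictly positive for $\alpha\le1$, so the zeroth-order and drift terms together control $\|\sqrt{w}\,V\|_{L^2}^2$.

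The nonlocal term is handled through its bilinear form and the algebraic splitting
$$\big(V(x)-V(y)\big)\cdot\big(w(x)V(x)-w(y)V(y)\big)=\frac{w(x)+w(y)}{2}\,|V(x)-V(y)|^2+\frac{w(x)-w(y)}{2}\big(|V(x)|^2-|V(y)|^2\big).$$
The first piece produces a positive weighted Gagliardo energy comparable to $\|\sqrt{w}\,(-\Delta)^{\alpha/2}V\|_{L^2}^2$, while the second, after symmetrization, equals $\int_{\R^3}|V|^2\,(-\Delta)^\alpha w\,\dd x$ with $|(-\Delta)^\alpha w|\lesssim (1+|x|)^{1-2\alpha}\lesssim1$ for $\alpha>\tfrac12$, hence is absorbed. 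The pressure contributes $\langle\nabla P,wV\rangle=-\int_{\R^3}P\,(\nabla w\cdot V)\dd x$, estimated via the weighted $L^2$ bound on $P$ recovered from $\Delta P=\textnormal{div}(F_0+F_1(V))$; the forcing is controlled using the smoothness and decay of $U_0$, and the only genuinely cubic term satisfies $\langle V\cdot\nabla V,wV\rangle=-\tfrac12\int_{\R^3}(\nabla w\cdot V)|V|^2\dd x$, bounded by $\int_{\R^3}|V|^3\dd x\le C\|V\|_{H^\alpha}^3$ since $|\nabla w|\le1$. All these manipulations would first be carried out on the smooth, decaying viscous approximations $V_\epsilon$ from the construction in \cite{LXZ} (where the $-\epsilon\Delta V_\epsilon$ term contributes the nonnegative $\epsilon\int_{\R^3}w|\nabla V_\epsilon|^2\dd x$), with all constants uniform in $\epsilon$; passing $\epsilon\to0$ by weak lower semicontinuity yields $\|\sqrt{w}\,V\|_{H^\alpha}\le C(U_0)$.

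For the bootstrap I would use the second-difference characterization of Besov spaces: for $0<s<2$, $\|f\|_{\dot H^s}^2\sim\int_{\R^3}\|\delta_h^2 f\|_{L^2}^2\,|h|^{-(3+2s)}\dd h$ with $\delta_h^2 f(x)=f(x+h)-2f(x)+f(x-h)$. Applying $\delta_h^2$ to \eqref{E}, the translation-invariant operator $(-\Delta)^\alpha$ commutes with $\delta_h^2$, while the drift $x\cdot\nabla$ generates a commutator $[\delta_h^2,x\cdot\nabla]V$ that is first order in both the field and the finite difference, hence controlled by the weighted $H^\alpha$ norm just established. Testing the differenced equation against $w\,\delta_h^2 V$ reproduces the coercive structure of Step one and bounds $\|\sqrt{w}\,(-\Delta)^{\alpha/2}\delta_h^2 V\|_{L^2}^2+\|\sqrt{w}\,\delta_h^2 V\|_{L^2}^2$ uniformly; dividing by $|h|^{3+2(1+\alpha)}$, integrating in $h$, and rewriting the result in terms of $\sqrt{w}\,V$ (the commutators between $\delta_h^2$ and $\sqrt{w}$ being lower order, since $\sqrt{w}$ has bounded first and second derivatives) gives $\sqrt{w}\,V\in H^{1+\alpha}(\R^3)$.

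The hard part will be the nonlinear term at the $H^{1+\alpha}$ level, namely controlling $\int_{\R^3}|h|^{-(3+2(1+\alpha))}\langle\delta_h^2 F_1(V),\,w\,\delta_h^2 V\rangle\dd h$: estimating the transported differences of the quadratic interaction $(U_0+V)\cdot\nabla V$ demands a product/commutator estimate in weighted Besov spaces, and closing it requires $\sqrt{w}\,V$ to sit in a high enough Sobolev space to dominate $\nabla V$ in the relevant norm. This integrability–embedding balance is precisely what forces $\alpha>\tfrac56$, below which the cubic interaction can no longer be absorbed by the coercive terms; if a single application is insufficient, one iterates, first gaining a fraction of a derivative to improve the embedding and then repeating until reaching $1+\alpha$. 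Finally, since $\alpha>\tfrac56$ gives $1+\alpha>\tfrac32$, the embedding $H^{1+\alpha}(\R^3)\hookrightarrow L^\infty(\R^3)$ applied to $\sqrt{w}\,V$ yields $\sqrt{1+|x|}\,|V(x)|\le\|\sqrt{w}\,V\|_{H^{1+\alpha}}\le C(U_0)$, that is $|V(x)|\le C(U_0)(1+|x|)^{-1/2}$.
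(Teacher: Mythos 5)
Your overall architecture matches the paper's: a weighted $H^\alpha$ energy estimate first, a difference-quotient bootstrap to reach $H^{1+\alpha}_\omega$, and the embedding $H^{1+\alpha}(\R^3)\hookrightarrow L^\infty(\R^3)$ applied to $\sqrt{1+|\cdot|}\,V$ for the decay. The technical route differs in two places. For the weighted $H^\alpha$ step you treat the nonlocal term through the symmetric Gagliardo bilinear form and the pointwise splitting identity (which is algebraically correct), whereas the paper works with the regularized weight $h_\varepsilon=\sqrt{(1+|x|)/(1+\varepsilon|x|^2)}$ and reduces everything to the commutator estimate $\|[\Lambda^\alpha,\phi]f\|_{L^2}\lesssim\|f\|_{L^2}$ of Lemma~\ref{lem-Comm}; your route is arguably more elementary, but note that your bound $|(-\Delta)^\alpha w|\lesssim(1+|x|)^{1-2\alpha}\lesssim 1$ fails near the origin, where $(-\Delta)^\alpha(1+|x|)$ blows up like $|x|^{1-2\alpha}$ (the weight is only Lipschitz there); this is an integrable singularity and can be absorbed via Hardy's inequality, but it must be said. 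Your pressure discussion also glosses over the fact that the part of $P$ generated by $U_0\cdot\nabla U_0$ need not lie in $L^2(\R^3)$, which is why the paper splits $P=P_1+P_2$ and integrates the $P_2$ contribution by parts using a weighted bound on $\nabla P_2$.

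The genuine gap is in the bootstrap. You propose to jump from $H^\alpha_\omega$ directly to $H^{1+\alpha}$ by applying $\delta_h^2$ and testing against $w\,\delta_h^2V$, asserting that the drift commutator $[\delta_h^2,x\cdot\nabla]V=h\cdot\bigl(\nabla V(\cdot+h)-\nabla V(\cdot-h)\bigr)$ and the differenced nonlinearity are ``controlled by the weighted $H^\alpha$ norm just established.'' They are not: both involve $\nabla V$, which for $\alpha<1$ is not yet known to be in $L^2$, and pairing them against $w\,\delta_h^2V$ after integration by parts reintroduces $\nabla\delta_h^2V$, which you also do not control --- the estimate is circular at this stage. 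This is precisely why the paper inserts an intermediate step (Proposition~\ref{prop3.2}): it tests with the \emph{fractional} difference quotient $\vartriangle_k^{h^\alpha}V=(V(\cdot+h\mathbf{e}_k)-V(\cdot))/|h|^\alpha$, which is an admissible $H^1_\omega$ test function given only $V\in H^\alpha_\omega$, to obtain $V\in B^{2\alpha}_{2,\infty}(\R^3)$; since $2\alpha>5/3>1$ this yields $\nabla V\in L^2(\R^3)$, and only then are standard first-order difference quotients legitimate for the passage to $H^{1+\alpha}$ (with the residual $\partial_{x_k}V$ terms handled by an $\dot H^{-\alpha}$--$\dot H^\alpha$ duality and interpolation). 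Your closing remark that one could ``iterate, gaining a fraction of a derivative'' points in the right direction, but without specifying the fractional normalization of the difference quotient (or an equivalent device) the scheme as written does not close; a secondary point is that a sup-in-$h$ second-difference estimate only delivers $B^{1+\alpha}_{2,\infty}$ rather than $H^{1+\alpha}$, so the $h$-integrated version, or a limit $h\to0$ as in the paper, is needed to get the stated space.
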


\begin{remark}
The restriction on $\alpha\in (5/6,1]$ is caused by convection  term $V\cdot\nabla V$.
The fact $V\in H^{\alpha}(\R^{3})$ implies that $V\cdot\nabla V\in W^{-1,\frac{3}{3-2\alpha}}(\R^{3})$.
 Roughly speaking, this fact  together with  the elliptic regularity theory yields that
$$
V\in W^{2\alpha-1,\frac{3}{3-2\alpha}}(\R^{3})\hookrightarrow L^{\frac{6}{3-2\alpha}}(\R^{3})
$$
where the imbedding need condition $\alpha>\frac{5}{6}$.
It is natural to ask whether the similar result  holds for the critical case $\alpha=5/6$.
\end{remark}

 To illuminate the motivations of this paper in detail, we sketch the proof  of Theorem~\ref{thm1.2}.
Compared with the case $\alpha=1$, the main obstacle in establishing global regularity originates  from the fractional diffusion operator.
To overcome this difficulty, with the method of vanishing viscosity, we will establish $H_{\omega}^\alpha(\R^3)$-estimate for weak solution $V$ by  choosing the suitable test function  $\varphi=\sqrt{\frac{1+|x|}{1+\varepsilon|x|^2}}V$ in the weak sense. Next, in view of difference characterisation  of  Besov  space, we improve the regularity of $V$ to a new level
$$
\big\| V\big\|_{B_{2,\infty}^{2\alpha}(\R^3)}<C\left(U_0\right),
$$
which  affords us  the $L^2$-bound of $\nabla V$. This bound can help us to choose
$
-D_k^{-h}\big(g^2_\varepsilon D^h_kV\big)$ with  $g_\varepsilon=\frac{1}{\sqrt{1+\varepsilon|x|^2}}
$  as the test function $\varphi$ in  equality \eqref{eq.weak-ESTIMATE} to obtain $V\in H^{1+\alpha}(\R^3).$ With this regularity in hand, the bootstrap argument enables us to conclude that
\begin{equation*}
\big\|V\big\|_{H_{\omega}^{1+\alpha}(\R^3)}<C\left(U_0\right),
\end{equation*}
by developing commutator estimates.

\smallskip
The second main result of this work is to improve the order of decay of $V$ at infinity by
establishing the corresponding linear theory for the non-local Stokes system with the singularity force and the basic properties of the non-local Oseen kernel.
In particular, for the classical Navier-Stokes equations, by means of  the special structure of Oseen kernel with the decay estimates of the second order derivatives of $V$,
 we can remove logarithmic loss, and establish  the  optimal decay estimates of $V$ under the lower regularity condition for $u_{0}$.
It is stated as follows.
\begin{theorem}\label{T1.3}
Let  $u_{0}=\frac{\sigma(x)}{|x|^{2\alpha-1}} \text{ with }\sigma(x)=\sigma(x/|x|)\in C^{0,1}(\mathbb{S}^{2})$, and $\frac{5}{6}<\alpha\leq1$, which satisfies $\mathrm{div}\, u_{0}=0$ in $\R^{3}$ in the distribution sense, then the solution $V(x)$ constructed by Theorem \ref{thm-1} fulfills
$$
|V|(x)\leq C(1+|x|)^{2-4\alpha}.
$$
Moreover, if $\alpha=1$, we can obtain the optimal
 decay estimate of $V$ under the condition $\sigma(x)\in C^{1,1}(\mathbb{S}^{2})$, {\rm i.e.}
$$
|V|(x)\leq C(1+|x|)^{-3}.
$$
\end{theorem}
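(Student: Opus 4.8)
The plan is to read off the decay of $V$ from a Duhamel representation in self-similar variables, in which $F_0$ is the dominant source and the nonlinearity $F_1(V)$ a perturbation tamed by Theorem~\ref{thm1.2}. Passing back to the evolutionary picture, a solution of the stationary system \eqref{E} is the time-one profile of the non-local Stokes evolution driven by $F_0+F_1(V)$, so that
\begin{equation*}
V(x)=\int_0^1\!\!\int_{\R^3}\mathcal{O}_\alpha(x-y,1-s)\,s^{-\frac{3}{2\alpha}}\big(F_0+F_1(V)\big)\!\left(y\big/s^{\frac{1}{2\alpha}}\right)\dd y\,\dd s,
\end{equation*}
where $\mathcal{O}_\alpha$ denotes the kernel of $e^{-t(-\Delta)^\alpha}\mathbb{P}$. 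The linear core of the argument is the pair of pointwise bounds
\begin{equation*}
|\mathcal{O}_\alpha(x,t)|\lesssim t\big(t^{\frac{1}{2\alpha}}+|x|\big)^{-(3+2\alpha)},\qquad |\nabla_x\mathcal{O}_\alpha(x,t)|\lesssim t\big(t^{\frac{1}{2\alpha}}+|x|\big)^{-(4+2\alpha)},
\end{equation*}
which I would obtain from the scaling $\mathcal{O}_\alpha(x,t)=t^{-\frac{3}{2\alpha}}\mathcal{O}_\alpha\big(x/t^{\frac{1}{2\alpha}},1\big)$ together with the smoothness and far-field decay of the fractional heat kernel.

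Next I would quantify the forcing. Because $u_0=\sigma(x)/|x|^{2\alpha-1}$ is homogeneous of degree $1-2\alpha$ and $U_0=e^{-(-\Delta)^\alpha}u_0$ is its convolution against a smooth, rapidly decaying kernel, the far field of $U_0$ matches that of $u_0$; the Lipschitz hypothesis $\sigma\in C^{0,1}(\mathbb{S}^{2})$ yields $|\nabla u_0(x)|\lesssim|x|^{-2\alpha}$, whence $|U_0(x)|\lesssim(1+|x|)^{1-2\alpha}$ and $|\nabla U_0(x)|\lesssim(1+|x|)^{-2\alpha}$. Hence
\begin{equation*}
|F_0(x)|=\big|U_0\cdot\nabla U_0\big|(x)\lesssim(1+|x|)^{1-4\alpha}.
\end{equation*}
For the nonlinear terms in $F_1(V)=-(U_0+V)\cdot\nabla V-V\cdot\nabla U_0$, I would feed in the bounds of Theorem~\ref{thm1.2}, namely $|V(x)|\lesssim(1+|x|)^{-1/2}$ and $V\in H^{1+\alpha}_\omega(\R^3)$, to check that every contribution decays at least as fast as $F_0$, confirming that $F_0$ dictates the behaviour at infinity.

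Inserting these bounds into the representation formula and performing the space-time convolution against $\mathcal{O}_\alpha$ yields the first assertion $|V(x)|\lesssim(1+|x|)^{2-4\alpha}$. The crude estimate $|V|\le\int_0^1\!\int|\mathcal{O}_\alpha|\,|F_0+F_1(V)|$ loses exactly one power relative to the decay of $F_0$, which accounts for the exponent $2-4\alpha$ rather than $1-4\alpha$. The a~priori rate $(1+|x|)^{-1/2}$ of Theorem~\ref{thm1.2}, available for $\alpha>5/6$, serves as the input of a short bootstrap: since $2-4\alpha<-1/2$ throughout this range, each pass through the convolution genuinely improves the decay of $V$, and thereby of $F_1(V)$, until the rate saturates at $(1+|x|)^{2-4\alpha}$.

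The delicate point, and the heart of the proof, is the second assertion: the removal of the logarithmic loss in \eqref{add-01} for $\alpha=1$ under the sharper hypothesis $\sigma\in C^{1,1}(\mathbb{S}^{2})$. The mechanism is cancellation. Since $\mathrm{div}\,U_0=\mathrm{div}\,V=0$, I would write $F_0=-\mathrm{div}(U_0\otimes U_0)$ and recast $F_1(V)$ in divergence form as well, then integrate by parts to move the derivative onto the kernel, representing $V$ through $\nabla_x\mathcal{O}_\alpha$ acting on $U_0\otimes U_0\lesssim(1+|x|)^{-2}$. The extra power carried by $\nabla_x\mathcal{O}_\alpha$ removes the borderline integrability responsible for the factor $\log(2+|x|)$ and restores the optimal decay $(1+|x|)^{-3}$. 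The obstacle is that integration by parts concentrates the difficulty near the singular time $s=1$, where the convection term $V\cdot\nabla V$ must be controlled without loss; here I would invoke the pointwise decay of the second derivatives of $V$, which the $C^{1,1}$ regularity of $u_0$ supplies through the bootstrap underlying Theorem~\ref{thm1.2}. Matching the fine structure of the Oseen tensor against these second-order estimates is the main technical hurdle, as is the derivation of the non-local kernel bounds of the first paragraph for $\alpha<1$; together they give $|V(x)|\lesssim(1+|x|)^{-3}$ and answer Tsai's question.
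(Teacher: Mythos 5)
Your overall architecture --- a Duhamel representation of $V$ through the non-local Oseen kernel, a two-pass bootstrap to reach $(1+|x|)^{2-4\alpha}$, and then divergence structure plus second-derivative decay plus the fine structure of the Oseen tensor for $\alpha=1$ --- coincides with the paper's (Propositions \ref{L4.2}--\ref{prop-4.3}). But two of your central claims are wrong in a way that breaks the argument. First, the kernel bounds you start from are false: $|\mathcal{O}_\alpha(x,t)|\lesssim t\big(t^{\frac{1}{2\alpha}}+|x|\big)^{-(3+2\alpha)}$ is the bound for the scalar kernel $G^{(\alpha)}$, not for $\mathcal{O}_\alpha=\mathbb{P}G^{(\alpha)}$. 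The Leray projection contributes $\partial_k\partial_j\mathbb{E}\ast G^{(\alpha)}$, whose far field is exactly the homogeneous Calder\'on--Zygmund profile $\partial_k\partial_j\mathbb{E}\sim|x|^{-3}$ (see \eqref{Ossen} and Lemma \ref{L4.1}); the true bound is only $(t^{\frac{1}{2\alpha}}+|x|)^{-3-\ell}$ for $D^\ell_x\mathcal{O}$. This slow algebraic tail is the entire source of the difficulty; with your bounds the theorem would be nearly immediate, which should have been a warning sign.

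Second, your mechanism for removing the logarithm at $\alpha=1$ does not suffice. Passing to divergence form and using $\Upsilon=\nabla\mathcal{O}$ against $f=U_0\otimes U_0+\cdots\lesssim(1+|x|)^{-2}$ does cure the region $|y|\ll|x|$ where the log arose, but it creates a new borderline region $|y-x|\le|x|/2$: there $|\bar f(y,s)|\lesssim|x|^{-2}$ while
\begin{equation*}
\int_0^1\!\!\int_{|z|\le|x|/2}\big(\sqrt{1-s}+|z|\big)^{-4}\,\mathrm{d}z\,\mathrm{d}s=O(1),
\end{equation*}
so size estimates yield only $|x|^{-2}$, not $|x|^{-3}$. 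The gain does not come from ``the extra power carried by $\nabla_x\mathcal{O}$''; it comes from a second-order Taylor expansion of $\bar f$ about $x$ combined with the vanishing of the zeroth and first spherical moments of the leading homogeneous part $\mathcal{F}=\partial_h\partial_k\partial_j\mathbb{E}$ of $\Upsilon$ (the cancellations \eqref{cancel} of Lemma \ref{C4.1}), with the Taylor remainder controlled by $|\nabla^2\bar f|\lesssim(1+|x|)^{-4}$. That last input, the pointwise bound $|\nabla^2V|\lesssim(1+|x|)^{-3+\delta}$, is not ``supplied by Theorem \ref{thm1.2}'' (which gives only weighted $H^{1+\alpha}$ control); it requires the separate weighted elliptic estimate for $|x|\partial_{kk}V$ and the three-stage potential-theoretic bootstrap of Proposition \ref{prop-4.2}. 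Your outline of the first assertion is essentially sound once the kernel bounds are corrected, but the second assertion, as proposed, has a genuine hole precisely at the step the theorem is about.
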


\begin{remark}  From the proof of Theorem \ref{T1.3}, it follows that the solution $V$ constructed by Jia and \v{S}ver\'{a}k in \cite{JS} possess  the optimal
 decay estimate under the lower regularity condition i.e.,  $u_{0}\in C^{1,1}(\R^{3}\setminus\{0\})$.  This answers the question proposed by  Tsai in \cite{T}.
\end{remark}

\begin{remark}
The essential ingredient in establishing the optimal decay estimate  is how to gain  the decay estimate of the second order derivative of $V$.
It seems to us that for $5/6<\alpha<1$, it is impossible to  obtain any  pointwise estimate of $D^{2}V$,
this fact prevents us from establishing the optimal decay of $V$ for the generalized Leray equations.
\end{remark}

The proof of Theorem \ref{T1.3} goes roughly as follows.  First, we establish the corresponding linear theory for the non-local Stokes system with the singularity force,  which is of independent interest.  Thus, we can represent $V$ as
\begin{equation}\label{V-Represent}
V(x)=\int_{0}^{1}\int_{\R^{3}}\mathcal{O}(x-y,1-s)s^{\frac{1}{2\alpha}-2}\{F_{0}+F_{1}(V)\}(y/s^{\frac{1}{2\alpha}})\,{\rm d}y{\rm d}s,
\end{equation}
where $\mathcal{O}$ is the Oseen kernel of the the non-local Stokes operator.
Combining this representation with the roughly decay of $V$ obtained in Theorem \ref{thm1.2},
we can improve the decay estimate of $V$ to
$$
|V(x)|\leq C(1+|x|)^{2-4\alpha}.
$$
Next, we will remove the logarithmic  loss in \eqref{add-02} to obtain the optimal decay estimate of $V$  for $\alpha=1$.  Here, we decompose  \eqref{V-Represent} into
$$
\int_{0}^{1}\int_{|y|<|x|/2}\cdots\,{\rm d}y{\rm d}s+\int_{0}^{1}\int_{|x|/2<|y|<2|x|}\cdots\,{\rm d}y{\rm d}s+\int_{0}^{1}\int_{|y|>2|x|}\cdots\,{\rm d}y{\rm d}s.
$$
The latter two terms are easily controlled by $(1+|x|)^{-3}$, but  the first term is controlled by
$$
\int_{0}^{1}\int_{|y|<|x|/2}(|x-y|+\sqrt{1-s})^{-3}(\sqrt{s}+|y|)^{-3}\,{\rm d}y{\rm d}s\lesssim (1+|x|)^{-3} \log (2+|x|)
$$
by traditional argument which causes a logarithmic loss.
To avoid this loss, we first  obtain  the decay estimate as
$$
|D^{2}V|\leq C(1+|x|)^{-3+\delta}, \;\;\;\text{for}\;\;0<\delta\ll 1$$
by establishing the regularity estimate of the second order derivative of $V$ in weighted Sobolev space.
 This estimate, together with the special structure of the Oseen tensor,
 finally leads to the optimal controlled estimate on  the first term.  We finally  obtain the optimal estimate of $V$:
$$
|V(x)|\thicksim |U_{0}\cdot\nabla U_{0}|\leq C(1+|x|)^{-3}.
$$

As a direct application of Theorem \ref{T1.3}, one can  use Lemma \ref{Lfreeterm}  to  obtain the natural pointwise property of the
self-similar solution constructed in \cite{LXZ}.

\begin{corollary}
	Assume $\frac{5}{6}<\alpha\leq1$. Let  $u_{0}=\frac{\sigma(x)}{|x|^{2\alpha-1}} \text{ with }\sigma(x)=\sigma(x/|x|)\in C^{0,1}(\mathbb{S}^{2})$, which satisfies $\mathrm{div}\, u_{0}=0$ in $\R^{3}$ in the distribution sense. Then problem \eqref{NS} admits at least one forward self-similar
	solution $u\in \mathrm{BC}_{w}\big ([0,+\infty),\,L^{\frac{3}{2\alpha-1},\infty}(\R^3)\big)$  such that
		\begin{equation*}
		|u(x,t)|\leq\frac{C}{(|x|+t^{\frac{1}{2\alpha}})^{2\alpha-1}}\quad\text{and}\quad \left|u(x,t)-e^{-t(-\Delta)^{\alpha}}u_{0}\right|\leq \frac{Ct^{\frac{2\alpha-1}{2\alpha}}}{(|x|+t^{\frac{1}{2\alpha}})^{4\alpha-2}}
		\end{equation*}
		for all $(x,t)\in \R^{3}\times(0,+\infty)$. Moreover, if $\alpha=1$ and $\sigma(x)\in C^{1,1}(\mathbb S^{2})$, we have
$$
\left|u(x,t)-e^{t\Delta}u_{0}\right|\leq \frac{Ct}{(|x|+t^{\frac{1}{2}})^{3}}\ \ \mbox{for all}\ \  (x,t)\in \R^{3}\times(0,+\infty).
$$
\end{corollary}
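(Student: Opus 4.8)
The plan is to obtain all the space--time estimates by inserting the profile bounds of Theorems~\ref{thm-1} and~\ref{T1.3} into the self-similar ansatz
$$
u(x,t)=t^{-\frac{2\alpha-1}{2\alpha}}U\big(t^{-\frac{1}{2\alpha}}x\big),\qquad U=U_{0}+V,
$$
which, by the derivation recalled in the introduction, solves \eqref{NS}--\eqref{NS-i} precisely when $V$ solves \eqref{E}. Since Theorem~\ref{thm-1} produces such a $V$ and Theorem~\ref{T1.3} supplies its decay, the corollary is essentially a bookkeeping of scaling exponents, once the linear part is separated off. I would therefore first record that, because $u_{0}=\sigma(x/|x|)|x|^{1-2\alpha}$ is homogeneous of degree $-(2\alpha-1)$, the self-similarity of the fractional heat semigroup gives $e^{-t(-\Delta)^{\alpha}}u_{0}(x)=t^{-\frac{2\alpha-1}{2\alpha}}U_{0}\big(t^{-\frac{1}{2\alpha}}x\big)$ with $U_{0}=e^{-(-\Delta)^{\alpha}}u_{0}$, so that the difference is carried entirely by the profile $V$:
$$
u(x,t)-e^{-t(-\Delta)^{\alpha}}u_{0}(x)=t^{-\frac{2\alpha-1}{2\alpha}}V\big(t^{-\frac{1}{2\alpha}}x\big).
$$

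Next I would insert the bound $|V|(x)\le C(1+|x|)^{2-4\alpha}$ from Theorem~\ref{T1.3} and substitute $y=t^{-1/(2\alpha)}x$. Writing $\big(1+|x|t^{-1/(2\alpha)}\big)^{2-4\alpha}=t^{\frac{4\alpha-2}{2\alpha}}\big(t^{1/(2\alpha)}+|x|\big)^{2-4\alpha}$ and using the cancellation $-\frac{2\alpha-1}{2\alpha}+\frac{4\alpha-2}{2\alpha}=\frac{2\alpha-1}{2\alpha}$, the powers of $t$ collect to give exactly the second displayed estimate $\big|u-e^{-t(-\Delta)^{\alpha}}u_{0}\big|\le Ct^{\frac{2\alpha-1}{2\alpha}}\big(|x|+t^{1/(2\alpha)}\big)^{2-4\alpha}$. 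In the borderline case $\alpha=1$, $\sigma\in C^{1,1}(\mathbb S^{2})$, the improved profile decay $|V|(x)\le C(1+|x|)^{-3}$ from Theorem~\ref{T1.3} yields the last estimate by the identical computation with exponent $-3$ in place of $2-4\alpha$.

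For the first estimate I would control the linear part through Lemma~\ref{Lfreeterm}, which provides $|U_{0}(x)|\le C(1+|x|)^{1-2\alpha}$; scaling this exactly as above produces $\big|e^{-t(-\Delta)^{\alpha}}u_{0}(x)\big|\le C\big(|x|+t^{1/(2\alpha)}\big)^{1-2\alpha}$. Since $|x|+t^{1/(2\alpha)}\ge t^{1/(2\alpha)}$, the already established difference estimate is dominated by the same quantity, so adding the two pieces gives $|u(x,t)|\le C\big(|x|+t^{1/(2\alpha)}\big)^{1-2\alpha}$, which is the claimed bound. This pointwise bound also settles the function-space membership: because $|x|^{1-2\alpha}\in L^{\frac{3}{2\alpha-1},\infty}(\R^{3})$, the estimate shows $u(\cdot,t)$ lies in $L^{\frac{3}{2\alpha-1},\infty}(\R^{3})$ with norm uniform in $t$, and the weak-$\ast$ continuity on $(0,\infty)$ together with the convergence $u(\cdot,t)\rightharpoonup u_{0}$ as $t\to0^{+}$ follows from the self-similar structure and the standard continuity of the semigroup, yielding $u\in\mathrm{BC}_{w}\big([0,\infty),L^{\frac{3}{2\alpha-1},\infty}(\R^{3})\big)$.

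I expect the pointwise estimates to be routine, since the decay of $V$ is already the content of Theorem~\ref{T1.3} and the free-part decay is exactly what Lemma~\ref{Lfreeterm} supplies; the only genuinely delicate point is verifying the $\mathrm{BC}_{w}$ membership, that is, the weak-$\ast$ continuity up to the singular time $t=0$, rather than the pointwise bounds themselves.
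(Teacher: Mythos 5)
Your argument is correct and is essentially the paper's own route: the corollary is treated there as a direct consequence of Theorem \ref{T1.3} and Lemma \ref{Lfreeterm}, obtained by inserting the profile decay of $V$ and $U_{0}$ into the self-similar ansatz $u(x,t)=t^{-\frac{2\alpha-1}{2\alpha}}(U_{0}+V)(t^{-\frac{1}{2\alpha}}x)$ and tracking the scaling exponents exactly as you do. The $\mathrm{BC}_{w}$ membership is inherited from the construction of the solution in \cite{LXZ}, consistent with your remark that this part rests on the existence theory rather than on the new pointwise bounds.
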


Now we conclude this section with a definition of a the weak solution of the generalized Leray equations \eqref{E} and some notations.

\begin{definition}[Weak Solutions]\label{def-weak}
	We say that the couple $(V,\,{P})$ is a weak solution to problem~\eqref{E} if
	\begin{itemize}
		\item $V\in  H^\alpha(\R^3)$ and ${P}\in L^{2}(\R^3) ;$
		\item $(V,\,{P})$ satisfies \eqref{E} in the sense of distribution in $\R^3,$ i.e.  for all vector fields $\varphi\in H^1(\mathbb{R}^3)$ satisfying $\big\||\cdot |\varphi(\cdot)\big\|_{H^1(\mathbb{R}^3)}<+\infty,$
		the couple $(V,P)$ fulfills
		\begin{align}\label{eq.weak-ESTIMATE}
		\begin{split}
		&\int_{\mathbb{R}^3}(-\Delta)^\frac{\alpha}{2} V:(-\Delta)^\frac{\alpha}{2}\varphi\,\mathrm{d}x+\frac{1}{2\alpha}\int_{\mathbb{R}^3}x\cdot \nabla\varphi\cdot V\,\mathrm{d}x-\frac{\alpha-2}{\alpha}\int_{\mathbb{R}^3} V\cdot\varphi\,\mathrm{d}x\\
		=&\int_{\mathbb{R}^3}P\, \mathrm{div}\,\varphi\,\mathrm{d}x+\int_{\mathbb{R}^3}V\cdot \nabla \varphi\cdot V\,\mathrm{d}x-\int_{\mathbb{R}^3} V\cdot U_0\cdot\nabla\varphi\,\mathrm{d}x-\int_{\mathbb{R}^3}(V+U_0)\cdot \nabla  U_0\cdot \varphi\,\mathrm{d}x.
		\end{split}
		\end{align}
	\end{itemize}
\end{definition}

\noindent\textbf{Notation:} We first agree that  $\langle x\rangle =(1+|x|^2)^{\frac12}$ and $\Lambda=\sqrt{-\Delta}$. We define
\[\mathbb{B}_R(x)=\big\{y\in\R^3\big|\,|x-y|<R\big\}\]
and $\mathbb{B}^c_R(x)=\R^3 \backslash \mathbb{B}_R(x).$
Define the fractional difference quotient  $\vartriangle_k^{h^\alpha}u$ by
$$\vartriangle_k^{h^\alpha}u:=\frac{u(x+h\mathbf{e}_k)-u(x)}{|h|^\alpha}\qquad h\in\mathbb{R},\,\,h\neq0.$$
As usual, we define the commutator as
$$
[A,B]=AB-BA.
$$

This paper is organized as follows: In the next section, we give some preliminary lemmas which will be used in the forthcoming proof. In Section 3, we  devote to  the proof of Theorem \ref{thm1.2}. Finally, decay estimates of $V$ are established in Section 4.

\section{Preliminaries}
\setcounter{equation}{0}

In this section, let's us begin with the so-called Littlewood-Paley decomposition , see e.g.,  \cite{BCD11}.
Suppose that  $(\chi,\varphi)$ be a couple of smooth functions with  values in $[0,1]$
such that  $\text{supp}\,\chi\subset \big\{\xi\in\mathbb{R}^{3}\big||\xi|\leq\frac{4}{3}\big\}$,
$\text{supp}\,\varphi\subset\big\{\xi\in\mathbb{R}^{3}\,\big|\,\frac{3}{4}\leq|\xi|\leq\frac{8}{3}\big\}$ and
\begin{equation*}
\chi(\xi)+\sum_{j\in \mathbb{N}}\varphi(2^{-j}\xi)=1\qquad \forall\,\xi\in \mathbb{R}^{3}.
\end{equation*}
For any $u\in \mathcal{S}'(\mathbb{R}^{3})$, let us define
\begin{equation*}
\Delta_{-1}u\triangleq\chi(D)u\quad\text{and}\quad   {\Delta}_{j}u\triangleq\varphi(2^{-j}D)u\qquad \forall\,j\in\mathbb{N}.
\end{equation*}
Moreover, we can define the low-frequency cut-off:
\begin{equation*}
{S}_{j}u\triangleq\chi(2^{-j}D)u.
\end{equation*}
So, we easily find  that
\begin{equation*}
u=\sum_{j\geq-1}{\Delta}_{j}u\qquad \text{in}\quad\mathcal{S}'(\mathbb{R}^{3}),
\end{equation*}
which  corresponds to  the \emph{inhomogeneous Littlewood-Paley decomposition}.
In usual, we always use the following properties of quasi-orthogonality:
\begin{equation*}
{\Delta}_{j}{\Delta}_{j'}u\equiv 0\quad \text{if}\quad |j-j'|\geq 2.
\end{equation*}
\begin{equation*}
{\Delta}_{j}({S}_{j'-1}u{\Delta}_{j'}v)\equiv0\quad \text{if}\quad |j-j'|\geq5.
\end{equation*}
We shall also use the \emph{homogeneous Littlewood-Paley} operators governed by
\begin{equation*}
\dot{S}_{j}u\triangleq\chi(2^{-j}D)u \quad\text{and}\quad {\Delta}_{j}u\triangleq\varphi(2^{-j}D)u\qquad\forall\,j\in\mathbb{Z}.
\end{equation*}
We denoted by  $\mathcal{S}_{h}'(\mathbb R^3)$ the space of tempered distributions $u$ such that
\begin{equation*}
\lim_{j\rightarrow-\infty}\dot{S}_{j}u=0\quad \text{in} \quad \mathcal{S}'(\mathbb R^3).
\end{equation*}
The \emph{homogeneous Littlewood-Paley decomposition}  can be written as
\begin{equation*}
u=\sum_{j\in\mathbb Z}{\Delta}_{j}u,\quad \text{in}\quad\mathcal{S}'_h(\mathbb{R}^{3}).
\end{equation*}
 Based on the above decomposition in frequency space, we will give definition of the  homogeneous Besov space $	\dot{B}^{s}_{p,q}(\mathbb{R}^{3})$.

\begin{definition}\label{def2.2}
	Let $s\in \mathbb{R}$, $(p,q)\in [1,+\infty]^{2}$ and $u\in \mathcal{S}'_h(\mathbb{R}^{3})$.
Then  the \emph{ homogeneous Besov spaces} can be defined as
	\begin{equation*}
	 \dot{B}^{s}_{p,q}(\mathbb{R}^{3})\triangleq\big\{u\in\mathcal{S}'(\mathbb{R}^{3})\big|\,\,\|u\|_{\dot{B}^{s}_{p,q}(\mathbb{R}^{3})}<+\infty\big\},
	\end{equation*}
	where
	\begin{equation*}
	\|u\|_{{\dot{B}}^{s}_{p,q}(\mathbb{R}^{3})}\triangleq\begin{cases}
	\Big( \displaystyle{\sum_{j\in\mathbb Z}}\,2^{jsq}\|{\Delta}_{j}u\|_{L^{p}(\mathbb{R}^{3})}^{q}\Big)^{\frac{1}{q}}
	\quad&\text{if}\quad q<+\infty,\\
	\displaystyle{\sup_{j\in\mathbb Z}}\,2^{js}\| {\Delta}_{j}u\|_{L^{p}(\mathbb{R}^{3})}\quad&\text{if}\quad q=+\infty.
	\end{cases}
	\end{equation*}
\end{definition}
Now we recall some useful properties of the  homogeneous Besov space $	\dot{B}^{s}_{p,q}(\mathbb{R}^{3})$ defined in Definition \ref{def2.2} .
\begin{lemma}[\cite{BCD11, Miao-book}\label{lem-equi}]
There hold that
	\begin{itemize}
		\item[(i)] For $
		1 \leqslant p_{1} \leqslant p_{2} \leqslant \infty,$ $1 \leqslant r_{1} \leqslant r_{2} \leqslant \infty$ and $ s \in \mathbb{R}$,
		\[\dot{B}_{p_{1}, r_{1}}^{s} (\R^3)\hookrightarrow \dot{B}_{p_{2}, r_{2}}^{s-3\left(\frac{1}{p_{1}}-\frac{1}{p_{2}}\right)} (\R^3).\]
		\item [(ii)]For $s<\frac{3}{2},$ Besov spaces $\dot{B}^s_{2,2}(\R^3)$ coincide  with Hilbert spaces $\dot{H}^s(\R^3).$
		\item[(iii)] For $s\in (0,1)$ 	and $p,\,r\in[1,\infty]$,
		$$
		C^ {-1} \|u\|_{\dot {B}_ {p,r} ^ {s}(\R^3) }
		\leqslant \left\| \frac { \left\| \tau_{-h }u-u\right\|_{ L^p(\R^3 }} {|h|^ {s} } \right\|_ {L ^ {r} \left( \mathbb{R}^{3}, \frac{\textnormal{d}h}{|h|^{3}}\right)}\leq C\| u\|_{\dot {B}_{p,r}^{s}(\R^3)},$$
		where $\tau_{h}u = u(x-h).$
	\end{itemize}
\end{lemma}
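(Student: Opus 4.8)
The plan is to derive all three items from the Littlewood--Paley calculus already set up above, the Bernstein inequalities, and the almost-orthogonality of the blocks $\Delta_j$.

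For (i), I would invoke Bernstein's inequality: since $\Delta_j u$ has Fourier support in the annulus $\{|\xi|\sim 2^j\}$, for $p_1\le p_2$ one has $\|\Delta_j u\|_{L^{p_2}}\lesssim 2^{3j(\frac{1}{p_1}-\frac{1}{p_2})}\|\Delta_j u\|_{L^{p_1}}$. Multiplying by $2^{j(s-3(\frac1{p_1}-\frac1{p_2}))}$ converts the left weight into $2^{js}$, so that $2^{j(s-3(\frac1{p_1}-\frac1{p_2}))}\|\Delta_j u\|_{L^{p_2}}\lesssim 2^{js}\|\Delta_j u\|_{L^{p_1}}$; taking the $\ell^{r_2}$ norm in $j$ and using the sequence embedding $\ell^{r_1}\hookrightarrow\ell^{r_2}$ (valid since $r_1\le r_2$) then yields the desired inclusion. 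For (ii), I would argue by Plancherel: $\|\Delta_j u\|_{L^2}^2=\int|\varphi(2^{-j}\xi)|^2|\widehat u(\xi)|^2\,\mathrm d\xi$, and summing over $j$ while using that $\sum_j|\varphi(2^{-j}\xi)|^2$ is bounded above and below away from the origin, together with $2^{2js}\sim|\xi|^{2s}$ on the support of $\varphi(2^{-j}\cdot)$, gives $\|u\|_{\dot B^s_{2,2}}^2\sim\int|\xi|^{2s}|\widehat u|^2\,\mathrm d\xi=\|u\|_{\dot H^s}^2$. The restriction $s<\frac32$ is precisely what makes the homogeneous objects define one and the same complete space inside $\mathcal S'_h(\R^3)$, so that the norm equivalence upgrades to an identification of spaces rather than a mere formal comparison.

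For (iii) I would prove the two inequalities separately. The easy direction (difference norm $\lesssim$ Besov norm) starts from $\tau_{-h}u-u=\sum_j(\tau_{-h}\Delta_j u-\Delta_j u)$ and the two block estimates $\|\tau_{-h}\Delta_j u-\Delta_j u\|_{L^p}\le 2\|\Delta_j u\|_{L^p}$ and, via the mean value theorem combined with Bernstein's gradient bound $\|\nabla\Delta_j u\|_{L^p}\lesssim 2^j\|\Delta_j u\|_{L^p}$, $\|\tau_{-h}\Delta_j u-\Delta_j u\|_{L^p}\lesssim 2^j|h|\,\|\Delta_j u\|_{L^p}$. Combining them gives $\|\tau_{-h}u-u\|_{L^p}\lesssim\sum_j\min(1,2^j|h|)\|\Delta_j u\|_{L^p}$; dividing by $|h|^s$ and taking the $L^r(\mathrm dh/|h|^3)$ norm, the resulting discrete-continuous convolution converges exactly because $s\in(0,1)$, where $s>0$ controls the regime $|h|\to\infty$ (low frequencies) and $s<1$ controls $|h|\to0$ (high frequencies).

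The hard direction (Besov norm $\lesssim$ difference norm), which I expect to be the main obstacle, requires reconstructing each dyadic block from first-order differences alone. Writing $\Delta_j u=g_j\ast u$ with $g_j=2^{3j}\psi(2^j\cdot)$ and $\psi=\mathcal F^{-1}\varphi\in\mathcal S$, the crucial point is the cancellation $\int_{\R^3} g_j\,\mathrm dh=\varphi(0)=0$ (because $\varphi$ is supported in an annulus), which turns $g_j\ast u$ into the representation $\Delta_j u=\int_{\R^3} g_j(h)\big(\tau_{-h}u-u\big)\,\mathrm dh$ and hence $\|\Delta_j u\|_{L^p}\le\int_{\R^3}|g_j(h)|\,\|\tau_{-h}u-u\|_{L^p}\,\mathrm dh$. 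Multiplying by $2^{js}$ and reorganizing the integral against the measure $\mathrm dh/|h|^3$, the effective kernel is a fixed Schwartz profile evaluated at $2^j h$; a Schur-type (generalized Young) estimate then applies, because one checks that $\int K(j,h)\,\mathrm dh/|h|^3$ is finite uniformly in $j$ and $\sum_j K(j,h)$ is finite uniformly in $h$, both guaranteed by $s>0$ and the rapid decay of $\psi$. This bounds $\big\|2^{js}\|\Delta_j u\|_{L^p}\big\|_{\ell^r}$ by $\big\||h|^{-s}\|\tau_{-h}u-u\|_{L^p}\big\|_{L^r(\mathrm dh/|h|^3)}$. I would flag this last step as the delicate one, since it is exactly here that the cancellation $\int g_j=0$ and the precise interplay between the dyadic scaling and the weight $\mathrm dh/|h|^3$ must be exploited; the remaining estimates are routine applications of Bernstein's inequalities.
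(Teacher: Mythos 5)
The paper states this lemma without proof, citing \cite{BCD11, Miao-book}, so there is no internal argument to compare against; your write-up is correct and is essentially the standard proof found in those references. Specifically: Bernstein on each annulus plus the sequence embedding $\ell^{r_1}\hookrightarrow\ell^{r_2}$ for (i); Plancherel together with $\sum_j|\varphi(2^{-j}\xi)|^2\sim 1$ for $\xi\neq0$ for (ii); and for (iii) the two block bounds $\|\tau_{-h}\Delta_j u-\Delta_j u\|_{L^p}\lesssim\min(1,2^j|h|)\|\Delta_j u\|_{L^p}$ in the easy direction and the reproducing identity $\Delta_j u=\int g_j(h)\,(\tau_{-h}u-u)\,\mathrm{d}h$ (valid because $\widehat{g_j}(0)=\varphi(0)=0$) followed by a Schur/generalized Young estimate in the hard direction. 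You correctly isolate the cancellation of $\int g_j$ as the crux, and your exponent bookkeeping --- $s>0$ controlling the regime $2^j|h|\gtrsim1$ and $s<1$ the regime $2^j|h|\lesssim1$ --- is exactly where the hypothesis $s\in(0,1)$ enters.
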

Next, we introduce the definitions of the weighted Hilbert space. Firstly we recall the weighted $L^2$-space.
For a non-negative locally integrable function  $\|\cdot\|_{2, \,{\omega}}$  denotes the norm on  $ L_{\omega}^{2}(\R^3)$,  i.e.,
\begin{equation*}\|g\|_{2, \,\omega}=\left(\int_{\mathbb{R}^{3}}|g(x)|^{2} \omega(x) \,\mathrm{d} x\right)^{1 / 2},\end{equation*} where $\omega(x)$ is the nonzero weighted function.
\begin{definition}\label{def-2}
	Let $s>0,$ and $\omega=\langle x\rangle$. Then we define the weighted  space $H_{\omega}^s(\R^3)$ as
	\[H^s_{\omega}(\R^3)\triangleq\big\{u\in\mathcal{S}'(\mathbb{R}^{3})\,\big|\; \|u\|_{H^s_{\omega}(\R^3)} <+\infty\big\},\]
	where
	\[\|u\|_{H^s_{\omega}(\R^3)}\triangleq \|u\|_{L^2_{\omega}(\R^3)}+\big\|\Lambda^su\big\|_{L^2_{\omega}(\R^3)}.\]
\end{definition}
Such a weighted Hilbert space enjoys the following properties which provided a working framework for weak solutions. Before stating it, we introduce the commutator between an operator $\Lambda^\alpha$ and a function $\phi$ defined by the formula
\[[\Lambda^\alpha, \phi] f=\Lambda^\alpha(\phi f)-f \Lambda^\alpha(\phi).\]
\begin{lemma}\label{lem-Comm}
	Let $\alpha\in(0,1),\, f\in L^2(\R^3)$ and $\phi\in \dot{C}^{\beta}(\R^3)\cap \dot{W}^{1,\infty}(\R^3)$, There exists a constant $C>0$ such that for each $ \beta\in[0,\alpha),$
	\[\big\|\left[\Lambda^\alpha,\,\phi\right]f\big\|_{L^2(\R^3)}\leq C\max\left\{\|\phi\|_{\dot{C}^{\beta}(\R^3)},\,\|\phi\|_{\dot{W}^{1,\infty}(\R^3)}\right\}\|f\|_{L^2(\R^3)}.\]
\end{lemma}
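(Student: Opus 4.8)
The plan is to reduce the estimate to an absolutely convergent kernel representation and then a Schur test. For $\alpha\in(0,1)$ the operator $\Lambda^\alpha=(-\Delta)^{\alpha/2}$ on $\R^3$ admits the singular-integral form
\[
\Lambda^\alpha g(x)=c_\alpha\,\mathrm{p.v.}\int_{\R^3}\frac{g(x)-g(y)}{|x-y|^{3+\alpha}}\dd y,
\]
with a normalizing constant $c_\alpha>0$. Writing the commutator as $[\Lambda^\alpha,\phi]f=\Lambda^\alpha(\phi f)-\phi\,\Lambda^\alpha f$ and subtracting the two principal-value integrals, the singular $f(x)-f(y)$ difference cancels, leaving
\[
[\Lambda^\alpha,\phi]f(x)=c_\alpha\int_{\R^3}\frac{\bigl(\phi(x)-\phi(y)\bigr)f(y)}{|x-y|^{3+\alpha}}\dd y .
\]
I would first establish this identity for $f$ in a dense class (Schwartz functions), where every term is legitimate, and note that the kernel $K(x,y)=c_\alpha(\phi(x)-\phi(y))|x-y|^{-3-\alpha}$ no longer carries any difference of $f$, so no cancellation or principal value is needed to make sense of it. The whole problem then reduces to showing that the integral operator with kernel $K$ is bounded on $L^2(\R^3)$, after which density extends the bound to all $f\in L^2$.

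The core is a Schur test in which the $y$-integral is split at the diagonal. Near the diagonal, $|x-y|<1$, I use the Lipschitz control $|\phi(x)-\phi(y)|\le\|\phi\|_{\dot W^{1,\infty}}|x-y|$, which gives $|K(x,y)|\lesssim\|\phi\|_{\dot W^{1,\infty}}|x-y|^{-2-\alpha}$; this is integrable over $\{|x-y|<1\}$ in $\R^3$ precisely because $-2-\alpha>-3$, i.e.\ because $\alpha<1$, and contributes $\tfrac{C}{1-\alpha}\|\phi\|_{\dot W^{1,\infty}}$. Far from the diagonal, $|x-y|\ge1$, I use the H\"older control $|\phi(x)-\phi(y)|\le\|\phi\|_{\dot C^\beta}|x-y|^\beta$, giving $|K(x,y)|\lesssim\|\phi\|_{\dot C^\beta}|x-y|^{\beta-3-\alpha}$, whose integral over $\{|x-y|\ge1\}$ converges exactly when $\beta-\alpha<0$, i.e.\ because $\beta<\alpha$, and contributes $\tfrac{C}{\alpha-\beta}\|\phi\|_{\dot C^\beta}$. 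Summing the two pieces yields
\[
\sup_{x}\int_{\R^3}|K(x,y)|\dd y\le C\Bigl(\tfrac{1}{1-\alpha}\|\phi\|_{\dot W^{1,\infty}}+\tfrac{1}{\alpha-\beta}\|\phi\|_{\dot C^\beta}\Bigr)\le C\max\{\|\phi\|_{\dot C^\beta},\|\phi\|_{\dot W^{1,\infty}}\}.
\]
Because $|K(x,y)|=|K(y,x)|$, the same bound holds for $\sup_{y}\int|K(x,y)|\dd x$, so the Schur test delivers the asserted $L^2\to L^2$ estimate with constant $C\max\{\|\phi\|_{\dot C^\beta},\|\phi\|_{\dot W^{1,\infty}}\}$. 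It is worth emphasizing that the two exponent thresholds in the hypotheses play complementary roles: $\alpha<1$ is exactly what tames the near-diagonal singularity through the Lipschitz bound, while $\beta<\alpha$ is exactly what secures integrability at spatial infinity through the H\"older bound.

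The only genuinely delicate point is the passage to the kernel identity, namely justifying that the two principal-value integrals may be subtracted term by term and that the resulting kernel acts absolutely on $f\in L^2$. I would handle this by proving the identity first on Schwartz functions, where $\Lambda^\alpha(\phi f)$ and $\phi\Lambda^\alpha f$ are classical, then observing that $\mathbf 1_{|x-y|<1}|x-y|^{-2-\alpha}\in L^1(\R^3)$ together with Young's inequality makes the kernel operator well defined on $L^2$, and finally extending the identity and the bound by density. Everything else is the routine book-keeping of the diagonal split, so I do not expect any further obstruction.
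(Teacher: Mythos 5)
Your argument is correct and is essentially the paper's own proof: both reduce the commutator to the absolutely convergent kernel $c_\alpha(\phi(x)-\phi(y))|x-y|^{-3-\alpha}$, bound the numerator by the Lipschitz seminorm near the diagonal and the $\dot C^\beta$ seminorm away from it, and conclude from the resulting $L^1$ kernel bound (your Schur test is the same computation as the paper's appeal to Young's inequality). The only difference is that you make the density/justification step and the role of the thresholds $\alpha<1$ and $\beta<\alpha$ explicit, which the paper leaves implicit.
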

\begin{proof}
	Thanks to the definition of the fractional operator, we  write
	$$\left[\Lambda^\alpha,\,\phi(x)\right]f=c_\alpha
	\int_{\R^3}\frac{\phi(x)-\phi(y)}{|x-y|^{3+\alpha}}f(y)\,\mathrm{d}y.$$
	Moreover, we  have by using $\phi\in \dot{C}^{\beta}(\R^3)\cap \dot{W}^{1,\infty}(\R^3)$ that
	\begin{align*}
	\big|\left[\Lambda^\alpha,\,\phi(x)\right]f\big|\leq&c_\alpha
	\int_{\R^3}\frac{\big|\phi(x)-\phi(y)\big|}{|x-y|^{3+\alpha}}|f|(y)\,\mathrm{d}y\\
	\leq&c_\alpha	\int_{\R^3}\frac{\min\big\{\|\phi\|_{\dot{C}^{\beta}(\R^3)} |x-y|^{\beta},\,\|\phi\|_{\dot{W}^{1,\infty}(\R^3)}|x-y|\big \}}{|x-y|^{3+\alpha}}|f|(y)\,\mathrm{d}y.
	\end{align*}
	Since $\beta\in[0,\alpha[,$	we readily have by the Young inequality that
	\begin{equation*}
	\big\|\big[\Lambda^\alpha,\,\phi\big]f\big\|_{L^2(\R^3)}\leq C\max\left\{\|\phi\|_{\dot{C}^{\beta}(\R^3)},\,\|\phi\|_{\dot{W}^{1,\infty}(\R^3)}\right\}\|f\|_{L^2(\R^3)}.
	\end{equation*}
	Thus we complete the proof of the lemma.
\end{proof}
\begin{lemma}\label{Weighted}
	Let $\omega=\langle x\rangle.$  There hold that
	\begin{itemize}
		\item [(i)]  For $s>0$, the weighted space $H^s_{\omega}(\R^3)$ is a Banach space .
		\item [(ii)] For $s\in (\frac12,2)$, there exist a constant $C>0$ such that
		\begin{equation}\label{key-Q}
		C^{-1}\|u\|_{H^s_{\omega}(\R^3)}\leq\left\|\omega u\right\|_{H^s(\R^3)}\leq C\|u\|_{H^s_{\omega}(\R^3)}.
		\end{equation}
	\end{itemize}
\end{lemma}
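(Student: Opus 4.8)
The plan is to reduce both parts to a single commutator estimate between $\Lambda^s$ and the weight. Since $\|u\|_{L^2_\omega}=\|wu\|_{L^2}$ with $w:=\langle\cdot\rangle^{1/2}=\sqrt\omega$, and $\|f\|_{H^s}\simeq\|f\|_{L^2}+\|\Lambda^sf\|_{L^2}$, the $L^2$-parts of the two sides of \eqref{key-Q} already agree, so the equivalence is exactly the comparison of $\|w\Lambda^s u\|_{L^2}$ with $\|\Lambda^s(wu)\|_{L^2}$. Writing $\Lambda^s(wu)=w\Lambda^s u+[\Lambda^s,w]u$, everything reduces to bounding $\|[\Lambda^s,w]u\|_{L^2}$ by a norm of $u$ of order strictly below $s$, which can then be absorbed.

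For $s\in(\tfrac12,1)$ I would invoke Lemma \ref{lem-Comm} directly. The weight $w=\langle x\rangle^{1/2}$ lies in $\dot{C}^{1/2}(\R^3)\cap\dot{W}^{1,\infty}(\R^3)$: its gradient $x/(2\langle x\rangle^{3/2})$ is bounded and $|w(x)-w(y)|\le|x-y|^{1/2}$ holds globally. Taking $\beta=\tfrac12<s$ in Lemma \ref{lem-Comm} gives $\|[\Lambda^s,w]u\|_{L^2}\le C\|u\|_{L^2}\le C\|u\|_{L^2_\omega}$, and inserting this into the identity above yields both inequalities of \eqref{key-Q}. This is precisely where $s>\tfrac12$ is used, since $\beta$ must stay strictly below $s$ and $\tfrac12$ is the Hölder exponent of the square-root weight.

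For $s\in[1,2)$ Lemma \ref{lem-Comm} no longer applies, and this case is the technical heart. I would use the hypersingular representation $\Lambda^sf(x)=c_s\,\mathrm{p.v.}\int_{\R^3}\frac{2f(x)-f(x+y)-f(x-y)}{|y|^{3+s}}\,\mathrm{d}y$, valid for $s\in(0,2)$, to write $[\Lambda^s,w]u(x)=-c_s\int_{\R^3}\frac{[w(x+y)-w(x)]u(x+y)+[w(x-y)-w(x)]u(x-y)}{|y|^{3+s}}\,\mathrm{d}y$ and split at $|y|=1$. On $\{|y|\ge1\}$ the global bound $|w(x\pm y)-w(x)|\lesssim|y|^{1/2}$ gives a radial kernel $|y|^{\frac12-3-s}\mathbf{1}_{|y|\ge1}$, which is integrable exactly when $s>\tfrac12$, hence an $L^2\to L^2$ contribution. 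On $\{|y|<1\}$ the second-order Taylor expansion of $w$, together with the antisymmetry in $\pm y$ and the decay $|\nabla^2 w|\lesssim\langle x\rangle^{-3/2}$, exhibits the commutator as an operator of order $s-1$, so that $\|[\Lambda^s,w]u\|_{L^2}\lesssim\|u\|_{H^{s-1}}$, the inner integral converging precisely because $s<2$. Both endpoints of $(\tfrac12,2)$ thus originate here. Since $s-1<s$, the interpolation $\|u\|_{H^{s-1}}\le\varepsilon\|\Lambda^s u\|_{L^2}+C_\varepsilon\|u\|_{L^2}$ and $w\ge1$ let me absorb the commutator into $\|w\Lambda^s u\|_{L^2}$, closing the lower bound in \eqref{key-Q}; the upper bound is immediate because there $\|u\|_{H^{s-1}}\le\|u\|_{H^s_\omega}$.

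For assertion (i) I would argue completeness directly, valid for all $s>0$. If $\{u_n\}$ is Cauchy in $H^s_\omega$, then $\{wu_n\}$ and $\{w\Lambda^s u_n\}$ are Cauchy in the complete space $L^2(\R^3)$, hence converge to some $g,h\in L^2$. Because $w\ge1$, convergence in $L^2_\omega$ implies convergence in $L^2_{\mathrm{loc}}$ and, by Cauchy--Schwarz against Schwartz functions, in $\mathcal S'(\R^3)$; thus $u:=w^{-1}g$ satisfies $u_n\to u$ in $\mathcal S'$, whence $\Lambda^s u_n\to\Lambda^s u$ in $\mathcal S'$. Comparing with the $L^2_\omega$-limit of $\Lambda^s u_n$ forces $\Lambda^s u=w^{-1}h\in L^2_\omega$, so $u\in H^s_\omega$ and $u_n\to u$ in $H^s_\omega$, proving completeness.
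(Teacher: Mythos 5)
Your proof is correct. For $s\in(\tfrac12,1)$ it is essentially the paper's argument: both reduce \eqref{key-Q} to the commutator $[\Lambda^s,\langle\cdot\rangle^{1/2}]$ and absorb it via Lemma \ref{lem-Comm}; note that your reading of the weight in $\|\omega u\|_{H^s}$ as $w=\langle x\rangle^{1/2}=\sqrt{\omega}$ is the right one (it is the only interpretation under which $\|wu\|_{L^2}=\|u\|_{L^2_\omega}$, and it is what the paper itself computes, since $\nabla\omega=\tfrac{x}{2(1+|x|^2)^{3/4}}$ is the gradient of $\langle x\rangle^{1/2}$). Your direct check that $|\langle x\rangle^{1/2}-\langle y\rangle^{1/2}|\le|x-y|^{1/2}$ is cleaner than the paper's detour through $\|\nabla\omega\|_{L^p}<\infty$ for $p>6$ and $\dot W^{1,p}\hookrightarrow\dot C^{1-3/p}$. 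Where you genuinely diverge is $s\in[1,2)$: the paper writes $\|\Lambda^s(\omega u)\|_{L^2}\lesssim\|\nabla\omega\,u\|_{\dot H^{s-1}}+\|\omega\nabla u\|_{\dot H^{s-1}}$ and treats the pieces with a Leibniz-type product estimate plus Lemma \ref{lem-Comm} at order $s-1$, whereas you estimate $[\Lambda^s,w]$ in one stroke from the second-difference representation of $\Lambda^s$, splitting at $|y|=1$. Your route is more self-contained and makes both endpoints $s>\tfrac12$ (integrability of $|y|^{\frac12-3-s}$ at infinity) and $s<2$ (integrability of $|y|^{2-3-s}$ at the origin) visible in a single kernel computation, avoiding the paraproduct step the paper states rather loosely; the price is the explicit Taylor/multiplier analysis of the first-order term. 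Two minor caveats, neither of which breaks the argument: at $s=1$ the elementary multiplier bound for the truncated operator $u\mapsto\int_{|y|<1}|y|^{-3-s}\,y\,[u(x+y)-u(x-y)]\,\mathrm{d}y$ picks up a logarithm, so you only get $\|[\Lambda,w]u\|_{L^2}\lesssim\|u\|_{H^{\epsilon}}$ rather than $\lesssim\|u\|_{L^2}$ — still of order strictly below $s$, so the absorption closes exactly as you describe; and in part (i), when passing $\Lambda^s$ through the limit it is cleanest to note that $w\ge1$ forces $u_n\to u$ and $\Lambda^su_n\to w^{-1}h$ in $L^2$, so the identification $\Lambda^su=w^{-1}h$ can be made on the Fourier side without discussing $\Lambda^s$ on general tempered distributions. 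Since the paper omits the proof of (i) altogether (appealing to the $A_2$ property of $\omega$), your completeness argument is a genuine addition rather than a restatement.
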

\begin{proof}
	Since $\omega=\langle x\rangle$ belongs to $A_2$-class, we can show $\rm (i)$ by the standard argument. So we omit its proof.   For $\rm (ii)$, we firstly consider the case  where $s\in(1/2,1).$ By the triangle inequality, we see that
	\begin{equation}\label{112}
	\left\|\omega u\right\|_{H^s(\R^3)}\leq \|u\|_{H^s_\omega(\R^3)}+\big\|[\Lambda^s,\omega]u\big\|_{L^2(\R^3)}.
	\end{equation}
	By Lemma \ref{lem-Comm},  we readily have that $\beta\in[0,s[$,
	\begin{equation}\label{111}
	\big\|[\Lambda^s,\omega]u\big\|_{L^2(\R^3)}\leq C\max\left\{\|\omega\|_{\dot{C}^{\beta}(\R^3)},\,\|\omega\|_{\dot{W}^{1,\infty}(\R^3)}\right\}\|u\|_{L^2(\R^3)}.
	\end{equation}
	So we need to bound both term $\|\omega\|_{\dot{C}^{\beta}(\R^3)}$ and $\|\omega\|_{\dot{W}^{1,\infty}(\R^3)}$. A simple calculation yields
	\begin{equation*}
\nabla \omega=\frac{x}{2(1+|x|^2)^{\frac34}}.
	\end{equation*}
	It follows that for $p>6,$
	$\|\nabla \omega\|_{L^p(\R^3)}<+\infty.$
	This inequality together with the embedding that $\dot{W}^{1,p}(\R^3)\hookrightarrow \dot{C}^{1-\frac{3}{p}}(\R^3)$ enables us to conclude that for each $s\in(1/2,1],$
	\begin{equation}\label{C-s}
\|\omega\|_{\dot{C}^s(\R^3)}<+\infty.
\end{equation}
	Plugging this inequality in \eqref{111} gives
	\begin{equation*}
	\big\|[\Lambda^s,\omega]u\big\|_{L^2(\R^3)}\leq C\|u\|_{L^2(\R^3)}.
	\end{equation*}
Inserting this estimate into \eqref{112}, we immediately get
	\begin{equation*}
\left\|\omega u\right\|_{H^s(\R^3)}\leq \|u\|_{H^s_{\omega}(\R^3)}+\|u\|_{L^2(\R^3)}\leq C \|u\|_{H^s_{\omega}(\R^3)}.
\end{equation*}
Next we consider the case  where $s\in]1,2[.$  Performing the similar fashion, we can show that
\begin{align*}
\left\|\omega u\right\|_{H^s(\R^3)}\leq & \big\|\omega u\big\|_{L^2(\R^3)}+ \big\|\nabla (\omega u)\big\|_{\dot{H}^{s-1}(\R^3)}\\
\leq&\big\|\omega u\big\|_{L^2(\R^3)}+ \big\|\nabla \omega u\big\|_{\dot{H}^{s-1}(\R^3)} +\big\|\omega \nabla u \big\|_{\dot{H}^{s-1}(\R^3)}.
\end{align*}
On one hand, the Leibinz estimate allows us to infer
\begin{align*}
 \big\|\nabla \omega u\big\|_{\dot{H}^{s-1}(\R^3)}\leq C\|\nabla w\|_{L^\infty(\R^3)}\|u\|_{H^s(\R^3)}+C\|u\|_{H^s(\R^3)}\|\omega\|_{\dot{C}^{s}(\R^3)}.
\end{align*}
The same argument as used in \eqref{C-s} allows  us to conclude that $\|\nabla ^2\omega\|_{L^\infty(\R^3)}<+\infty.$ Moreover, we have by the interpolation theorem that
for each $s\in(1/2,2],$
$\|u\|_{\dot{C}^s(\R^3)}<+\infty.
$ Therefore
\begin{align*}
\big\|\nabla \omega u\big\|_{\dot{H}^{s-1}(\R^3)}\leq C\|u\|_{H^s(\R^3)}.
\end{align*}
On the other hand, by Lemma \ref{lem-Comm} again, we obtain
\begin{align*}
\big\|\omega \nabla u \big\|_{\dot{H}^{s-1}(\R^3)}
\leq C\|u\|_{H^s_{\omega}(\R^3)}+ \left\|[\Lambda^{s-1},\omega]\nabla
u\right\|_{L^2(\R^3)}\leq C\|u\|_{H^s_{\omega}(\R^3)}.
\end{align*}
The left inequality in \eqref{key-Q}   can be processed in the same way.
\end{proof}
	
\begin{lemma}\label{Lfreeterm}
Let $u_{0}(x)=\frac{1}{|x|^{2\alpha-1}}\sigma(\frac{x}{|x|})\ \text{with}\ \sigma\in L^{\infty}(\mathbb{S}^{2})$ and
 $G^{(\alpha)}(x,t)$ the heat kernel of the operator
$\partial_{t}+(-\Delta)^{\alpha}$, then
 $$U_{0}(x)\triangleq(G^{(\alpha)}\ast u_{0})(x,t)\Big|_{t=1}\in C^{\infty}(\R^{3})$$
  such that $|U_{0}|\leq C(1+|x|)^{1-2\alpha}$ for $x\in \R^{3}$ and
\begin{equation*}\begin{aligned}
&|\nabla U_{0}|\leq C(1+|x|)^{-2\alpha},\ \;\quad\;\;  \mbox{if}\ \  u_{0}(x)\in C^{0,1}(\R^{3}\setminus\{0\})\\
&|\nabla^{2} U_{0}|\leq C(1+|x|)^{-1-2\alpha},\ \;\; \mbox{if}\ \  u_{0}(x)\in C^{1,1}(\R^{3}\setminus\{0\}).
\end{aligned}\end{equation*}
\end{lemma}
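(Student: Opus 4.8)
The plan is to write $U_{0}=G\ast u_{0}$ with $G:=G^{(\alpha)}(\cdot,1)$ and to exploit that $u_{0}$ is homogeneous of degree $-\beta$, $\beta:=2\alpha-1$, so that $|u_{0}(y)|\le C|y|^{-\beta}$, and, under the stated regularity, $|\nabla^{k}u_{0}(y)|\le C|y|^{-\beta-k}$ for $k=1,2$ away from the origin. The whole argument rests on the standard pointwise bounds for the fractional heat kernel and its derivatives, namely $G\in C^{\infty}(\R^{3})$ with
\[
\big|\nabla^{k}G(z)\big|\le C_{k}(1+|z|)^{-(3+2\alpha+k)},\qquad k=0,1,2,
\]
($\alpha=1$ giving Gaussian, hence faster, decay), together with $\int_{\R^{3}}G\dd z=1$ and the cancellation $\int_{\R^{3}}\nabla G\dd z=0$ (the latter from the divergence theorem, since $G$ decays faster than $|z|^{-2}$). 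Smoothness of $U_{0}$ follows at once: since $\beta<3$ we have $u_{0}\in L^{1}_{\mathrm{loc}}$, and differentiating $U_{0}(x)=\int_{\R^{3}}G(x-y)u_{0}(y)\dd y$ under the integral sign (all derivatives falling on $G$) produces absolutely convergent integrals.

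For the size bounds I would fix $R=|x|\ge 2$ and decompose $\R^{3}$ into the three regions $\mathbb{B}_{R/2}(0)$, $\{R/2\le|y|\le 2R\}$ and $\mathbb{B}^{c}_{2R}(0)$. For $U_{0}$ itself: on the inner region $|x-y|\ge R/2$ gives $G(x-y)\le CR^{-(3+2\alpha)}$ while $\int_{|y|\le R/2}|u_{0}|\lesssim R^{3-\beta}$, contributing $R^{1-4\alpha}$; on the outer region $|x-y|\ge|y|/2$ yields $R^{1-4\alpha}$ again; and the intermediate region, where $|u_{0}(y)|\le CR^{-\beta}$, is bounded by $CR^{-\beta}\int_{\R^{3}}G=CR^{1-2\alpha}$, the dominant term. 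Together with the boundedness of $U_{0}$ near the origin (the defining integral converges at $x=0$) this gives $|U_{0}(x)|\le C(1+|x|)^{1-2\alpha}$. The gradient bound is obtained the same way from the representation $\nabla U_{0}=G\ast\nabla u_{0}$: here $\nabla u_{0}$ is homogeneous of degree $-2\alpha$ and locally integrable (as $\sigma\in C^{0,1}(\mathbb{S}^{2})$), and the intermediate region now produces the sharp $CR^{-2\alpha}\int_{\R^{3}}G=CR^{-2\alpha}$, the other two regions being of lower order $R^{-4\alpha}$.

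The Hessian estimate is the main obstacle, because the crude bound fails: from $\nabla^{2}U_{0}=\nabla G\ast\nabla u_{0}$ the intermediate region would only give $CR^{-2\alpha}\int_{\R^{3}}|\nabla G|\sim R^{-2\alpha}$, losing one full power relative to the target $R^{-(2\alpha+1)}$; the same loss occurs for the Gaussian, so it is not a defect of the kernel decay but a genuine need for cancellation. The remedy is to use $\int_{\R^{3}}\nabla G=0$. Passing to the variable $z=x-y$ and splitting into $|z|\le R/2$ and $|z|\ge R/2$, on $\{|z|\le R/2\}$ the point $x-z$ stays at distance $\ge R/2$ from the origin, so I would Taylor expand $\nabla u_{0}(x-z)=\nabla u_{0}(x)+\big[\nabla u_{0}(x-z)-\nabla u_{0}(x)\big]$: the constant term contributes $|\nabla u_{0}(x)|\,\big|\int_{|z|>R/2}\nabla G\big|\lesssim R^{-2\alpha}\!\cdot\!R^{-(1+2\alpha)}$, while the difference, bounded by $C|z|R^{-(2\alpha+1)}$ together with $\int_{\R^{3}}|z|\,|\nabla G(z)|\dd z<\infty$, yields precisely $CR^{-(2\alpha+1)}$. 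The region $\{|z|\ge R/2\}$ splits once more into the neighbourhood $\{|x-z|\le R/4\}$ of the singularity of $\nabla u_{0}$, where $|\nabla G(z)|\le CR^{-(4+2\alpha)}$ and $\int_{|x-z|\le R/4}|\nabla u_{0}|\lesssim R^{3-2\alpha}$, and its complement, where $|\nabla u_{0}(x-z)|\le CR^{-2\alpha}$ and $\int_{|z|\ge R/2}|\nabla G|\lesssim R^{-(1+2\alpha)}$; both are of order $R^{-(1+4\alpha)}$. It is worth noting that this very representation $\nabla G\ast\nabla u_{0}$ is what makes the borderline case $\alpha=1$ admissible, where $\nabla^{2}u_{0}\sim|y|^{-3}$ fails to be locally integrable and the alternative splitting $\nabla^{2}G\ast u_{0}$ is unavailable. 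Collecting the pieces and invoking boundedness near the origin gives $|\nabla^{2}U_{0}(x)|\le C(1+|x|)^{-(1+2\alpha)}$, as claimed.
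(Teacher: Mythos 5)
Your proof is correct, and it shares the paper's overall skeleton: the kernel bounds $|\nabla^{k}G^{(\alpha)}(\cdot,1)|\leq C(1+|\cdot|)^{-3-2\alpha-k}$ together with the three-region decomposition $|y|\leq|x|/2$, $|x|/2\leq|y|\leq2|x|$, $|y|\geq2|x|$, and for $U_{0}$ itself the two arguments coincide term by term. Where you genuinely diverge is in how derivatives are transferred onto $u_{0}$ in the critical intermediate region. The paper keeps the representation $\nabla U_{0}=\nabla G\ast u_{0}$ and integrates by parts only over the middle annulus, accepting boundary terms on $\{|y|=|x|/2\}\cup\{|y|=2|x|\}$ (which are harmless since $|x-y|\gtrsim|x|$ there); for $\nabla^{2}U_{0}$ it merely asserts that a ``similar calculation'' (i.e.\ two integrations by parts on the annulus, again with controllable boundary terms) gives the result, and omits the details. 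You instead use the global identities $\nabla U_{0}=G\ast\nabla u_{0}$ and $\nabla^{2}U_{0}=\nabla G\ast\nabla u_{0}$ — legitimate because $\nabla u_{0}\sim|y|^{-2\alpha}$ is locally integrable and the distributional gradient of the homogeneous field has no contribution at the origin — and then recover the missing power for the Hessian from the moment cancellation $\int_{\R^{3}}\nabla G=0$ combined with a first-order Taylor expansion of $\nabla u_{0}$ near $y=x$. Both mechanisms exploit the same underlying fact (that only the oscillation of $\nabla u_{0}$ over the near-diagonal region, of size $|z|\,|x|^{-1-2\alpha}$, should matter), but yours makes the cancellation explicit and supplies precisely the step the paper leaves to the reader; your observation that the naive bound on $\nabla G\ast\nabla u_{0}$ loses exactly one power, for the Gaussian as well, correctly identifies why some form of cancellation is unavoidable. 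The only caveat is your closing remark that the splitting $\nabla^{2}G\ast u_{0}$ is ``unavailable'' at $\alpha=1$: it is unavailable only as a global identity ($\nabla^{2}u_{0}\sim|y|^{-3}$ fails to be integrable at the origin), whereas the paper's annulus-localized integration by parts never meets the origin and works for all $\alpha$ in range; this does not affect the validity of your argument.
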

\begin{proof}
 Note that
\begin{equation}\label{G-Estimate}
|\nabla^{k}G^{(\alpha)}(x,1)|\leq C(1+|x|)^{-3-2\alpha-k},\;\; \forall \;\; k\ge 0,
\end{equation}
for proof, see \cite{MYZ}. From this, it is easy to see that
$$
U_{0}(x)=\int_{\R^{3}}G^{(\alpha)}(x-y,1)u_{0}(y)\,{\rm d} y\in C^{\infty}(\R^{3})
$$
Now we need to derive the decay estimates of $U_{0}$. To this end,
we  decompose $U_{0}$ to be
\begin{align*}
U_{0}(x)&=\int_{\R^{3}}G^{(\alpha)}(x-y,1)\varphi(y)\,{\rm d}y=\Big(\int_{|y|\leq\frac{|x|}{2}}+\int_{\frac{|x|}{2}\leq|y|\leq2|x|}
+\int_{|y|\geq2|x|}\Big)G^{(\alpha)}(x-y,1)\varphi(y)\,{\rm d}y\\&
\triangleq\rm{I+II+III}.
\end{align*}
From  \eqref{G-Estimate}, we have for  $|x|>1$
$$
{\rm I}\leq C(1+|x|)^{-3-2\alpha}\int_{0}^{\frac{|x|}{2}}r^{3-2\alpha}\,{\rm d}r\leq C|x|^{1-4\alpha};
$$
\begin{align*}
&{\rm II}\leq C|x|^{1-2\alpha}\int_{0}^{3|x|}(1+r)^{-1-2\alpha}\,{\rm d}r\leq C|x|^{1-2\alpha};\\&
{\rm III}\leq C\int_{|x|}^{\infty}r^{-4\alpha}\,{\rm d}r\leq C|x|^{1-4\alpha}.
\end{align*}
Thus, we obtain
$$
|U_{0}|(x)\leq C(1+|x|)^{1-2\alpha}\ \ \mbox{for}\ \ x\in\R^{3}.
$$
Now we come back to the estimate of $\nabla U_{0}(x)$.
From $|\nabla\sigma(x)|<\infty$, we have $|\nabla u_{0}(x)|\leq C |x|^{-2\alpha}$ for $|x|>1$.
 As in above, we decompose $\nabla U_{0}$ as
\begin{align*}
\nabla U_{0}(x)=&\int_{\R^{3}}\nabla G^{(\alpha)}(x-y,1)\varphi(y)\,{\rm d}y\\
=&\Big(\int_{|y|\leq\frac{|x|}{2}}+\int_{\frac{|x|}{2}\leq|y|\leq2|x|}
+\int_{|y|\geq2|x|}\Big)\nabla_{x} G^{(\alpha)}(x-y,1)\varphi(y)\,{\rm d}y\\
\triangleq&\rm{I+II+III}.
\end{align*}
It is clear that
$$
|{\rm{I}}|\leq C(1+|x|)^{-4\alpha},\ \ \ \  |{\rm III}|\leq C(1+|x|)^{-4\alpha}.\qquad\qquad\qquad\quad
$$
For $\rm{II}$, we have
\begin{align*}
\rm{II}&=\int_{\frac{|x|}{2}\leq|y|\leq2|x|}\nabla_{x} G^{(\alpha)}(x-y,1)\varphi(y)\,{\rm d}y\\&
=\int_{\frac{|x|}{2}\leq|y|\leq2|x|} G^{(\alpha)}(x-y,1)\nabla\varphi(y)\,{\rm d}y
-\int_{\{|y|=\frac{|x|}{2}\}\cup\{|y|=2|x|\}} G^{(\alpha)}(x-y,1)\varphi(y)\vec{n}\,{\rm d}S_{y}\\&
\leq C(1+|x|)^{-2\alpha},
\end{align*}
where $\vec{n}$ is the unit outward normal vector  of the boundary $\big\{|y|=\frac{|x|}{2}\big\}\cup\big\{|y|=2|x|\big\}$. Collecting the estimates of (I-III),
 one has $$|\nabla U_{0}|(x)\leq (1+|x|)^{-2\alpha}.$$
  Similar calculation as above can lead to
  $$|\nabla^{2} U_{0}|(x)\leq C(1+|x|)^{-1-2\alpha}.$$
 Here we omit the details.
\end{proof}

Lastly, we recall the special structure of the fundamental solution of the non-local Stokes operator
$e^{-(-\Delta)^{\alpha}t}\mathbb{P}$, the  non-local Oseen kernel  denoted by
$\mathcal{O}(x,t)=(\mathcal{O}_{kj}(x,t))_{1\leq j,k\leq 3}$, which is given as
$$
\mathcal{O}_{kj}(x,t)=\delta_{kj}G^{(\alpha)}(x,t)+\partial_{k}\partial_{j}\int_{\R^{3}}\mathbb{E}(x-y)G^{(\alpha)}(x,t)\,{\rm d}y,\; \;  t>0.
$$
Here and in what follows, we denote by $\mathbb{E}=\frac{1}{4\pi}\frac{1}{|x|}$ the fundamental solution of $-\Delta$ in $\R^{3}$. From the self-similar property of $G^{(\alpha)}(x,t)$, one instantly derives
$$
\mathcal{O}_{kj}(x,t)=t^{-\frac{3}{2\alpha}}\mathcal{O}_{kj}\left(x/t^{\frac{1}{2\alpha}}\right).
$$

\begin{lemma}[\cite{Br, Cao}]\label{L4.1}
	For $(x,t)\in \R^{3}\times(0,\infty)$, the non-local Oseen kernel $\mathcal{O}(x,t)$
	\begin{align}\label{Ossen}
	 \mathcal{O}_{kj}(x,t)=\partial_{k}\partial_{j}\mathbb{E}++t^{-\frac{3}{2\alpha}}\Psi_{kj}\left(x/t^{\frac{1}{2\alpha}}\right)
	 =\frac{1}{4\pi}\cdot\frac{3x_{k}x_{j}-\delta_{kj}|x|^{2}}{|x|^{5}}+t^{-\frac{3}{2\alpha}}\Psi_{kj}\left(x/t^{\frac{1}{2\alpha}}\right)
	\end{align}
	and
	$$
	|D^{\ell}_x\partial_{t}^{m}\mathcal{O}_{kj}(x,t)|\leq C t^{-m}(t^{\frac{1}{2\alpha}}+|x|)^{-3-\ell},\;\;\;\forall\; \ell, m\in \mathbb{Z}^{+}.
	$$
	Here the function $\Psi_{kj}(x)$ is  smooth for $x\in \R^{3}\setminus\{0\}$ such that if $1/2<\alpha<1$
	$$
	|D_{x}^{\ell}\Psi_{kj}(x)|\leq C(1+|x|)^{-3-2\alpha-\ell}\ \ \ \mbox{as}\ \ |x|\to\infty,
	$$
	and if $\alpha=1$,
	$$
	|D_{x}^{\ell}\Psi_{kj}(x)|\leq C e^{-c|x|^{2}} \ \ \ \mbox{as}\ \ |x|\to\infty
	$$
	for some constants $C$ and $c$, depending only on $\ell$.
\end{lemma}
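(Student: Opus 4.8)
The plan is to start from the representation of the Oseen kernel recorded just above the statement,
\[
\mathcal{O}_{kj}(x,t) = \delta_{kj}G^{(\alpha)}(x,t) + \partial_k\partial_j\big(\mathbb{E}\ast G^{(\alpha)}(\cdot,t)\big)(x),
\]
together with the self-similarity $G^{(\alpha)}(x,t) = t^{-\frac{3}{2\alpha}}G^{(\alpha)}(x/t^{\frac{1}{2\alpha}},1)$. The first step is to reduce everything to the profile at $t=1$. Changing variables in the convolution and using that $\mathbb{E}$ is homogeneous of degree $-1$, one finds $\big(\mathbb{E}\ast G^{(\alpha)}(\cdot,t)\big)(x)=t^{-\frac{1}{2\alpha}}\Phi(x/t^{\frac{1}{2\alpha}})$ with $\Phi:=\mathbb{E}\ast G^{(\alpha)}(\cdot,1)$, whence applying $\partial_k\partial_j$ gives $\mathcal{O}_{kj}(x,t) = t^{-\frac{3}{2\alpha}}\mathcal{O}_{kj}(x/t^{\frac{1}{2\alpha}},1)$. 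So it suffices to analyze $\mathcal{O}_{kj}(\cdot,1)$ and rescale at the very end.

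Next I would isolate the stationary part. Using $\int_{\R^3}G^{(\alpha)}(z,1)\,\mathrm{d}z = 1$ I set
\[
\Psi_{kj}(w) := \delta_{kj}G^{(\alpha)}(w,1) + \partial_k\partial_j\big(\Phi-\mathbb{E}\big)(w),\qquad \mathcal{O}_{kj}(w,1) = \partial_k\partial_j\mathbb{E}(w) + \Psi_{kj}(w).
\]
Because $\partial_k\partial_j\mathbb{E}$ is homogeneous of degree $-3$, one has $t^{-\frac{3}{2\alpha}}(\partial_k\partial_j\mathbb{E})(x/t^{\frac{1}{2\alpha}}) = \partial_k\partial_j\mathbb{E}(x)$; combined with the rescaling of the previous step this yields exactly the claimed decomposition $\mathcal{O}_{kj}(x,t) = \partial_k\partial_j\mathbb{E}(x) + t^{-\frac{3}{2\alpha}}\Psi_{kj}(x/t^{\frac{1}{2\alpha}})$, and a direct computation identifies $\partial_k\partial_j\mathbb{E}$ with the stated tensor $\tfrac{1}{4\pi}\tfrac{3x_kx_j-\delta_{kj}|x|^2}{|x|^5}$. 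Smoothness of $\Psi_{kj}$ on $\R^3\setminus\{0\}$ follows since $\mathcal{O}_{kj}(\cdot,1)=\delta_{kj}G^{(\alpha)}(\cdot,1)+\partial_k\partial_j\Phi$ is smooth everywhere (elliptic regularity, as $-\Delta\Phi=G^{(\alpha)}(\cdot,1)\in C^\infty$), while $\partial_k\partial_j\mathbb{E}$ carries the only singularity, at the origin.

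The crux, and the step I expect to be the main obstacle, is the sharp decay of $\Psi_{kj}$. Using $\int G^{(\alpha)}(\cdot,1)=1$ I write the nontrivial term as
\[
\partial_k\partial_j(\Phi-\mathbb{E})(w) = \int_{\R^3}\big[\partial_k\partial_j\mathbb{E}(w-z)-\partial_k\partial_j\mathbb{E}(w)\big]G^{(\alpha)}(z,1)\,\mathrm{d}z,
\]
and split at $|z|=|w|/2$. On the inner region a second–order Taylor expansion together with the vanishing first moment $\int_{\R^3}z\,G^{(\alpha)}(z,1)\,\mathrm{d}z=0$ (radial symmetry) reduces matters to the tail integrals $\int_{|z|>|w|/2}|z|\,G^{(\alpha)}\,\mathrm{d}z\lesssim |w|^{1-2\alpha}$ and $\int_{|z|\le|w|/2}|z|^2 G^{(\alpha)}\,\mathrm{d}z\lesssim |w|^{2-2\alpha}$, both finite precisely because $\alpha>\frac12$, by \eqref{G-Estimate}; multiplying against $|\nabla\partial_k\partial_j\mathbb{E}(w)|\lesssim|w|^{-4}$ and $|\nabla^2\partial_k\partial_j\mathbb{E}|\lesssim|w|^{-5}$ yields the rate $|w|^{-3-2\alpha}$. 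On the outer region the term with $\partial_k\partial_j\mathbb{E}(w)$ is bounded by $|w|^{-3}\int_{|z|>|w|/2}G^{(\alpha)}\lesssim|w|^{-3-2\alpha}$, while the term with $\partial_k\partial_j\mathbb{E}(w-z)$ requires a further near–diagonal splitting, combining the tail bound $G^{(\alpha)}(z,1)\lesssim|w|^{-3-2\alpha}$ for $|z|\ge \tfrac34|w|$ with the Calder\'on--Zygmund cancellation of $\partial_k\partial_j\mathbb{E}$; this bookkeeping is the delicate point, since the naive bound only gives the slower $|w|^{-4}$. Differentiating under the integral and repeating with \eqref{G-Estimate} upgrades this to $|D^\ell_w\Psi_{kj}(w)|\lesssim(1+|w|)^{-3-2\alpha-\ell}$; when $\alpha=1$ the Gaussian profile has all moments and super–algebraic decay, which promotes the rate to $e^{-c|w|^2}$.

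Finally I would assemble the pointwise kernel bound. The profile satisfies $|D^\ell_w\mathcal{O}_{kj}(w,1)|\lesssim(1+|w|)^{-3-\ell}$, being smooth near the origin and decaying at infinity with the $\partial_k\partial_j\mathbb{E}$ part contributing $|w|^{-3-\ell}$ and the $\Psi_{kj}$ part the faster $|w|^{-3-2\alpha-\ell}$. Rescaling through $\mathcal{O}_{kj}(x,t)=t^{-\frac{3}{2\alpha}}\mathcal{O}_{kj}(x/t^{\frac{1}{2\alpha}},1)$ converts this into $|D^\ell_x\mathcal{O}_{kj}(x,t)|\lesssim(t^{\frac{1}{2\alpha}}+|x|)^{-3-\ell}$. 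Since $\partial_k\partial_j\mathbb{E}$ is time–independent, every $\partial_t$ acts only on the self–similar remainder $t^{-\frac{3}{2\alpha}}\Psi_{kj}(x/t^{\frac{1}{2\alpha}})$ and, by the chain rule in the variable $x/t^{\frac{1}{2\alpha}}$, produces one extra factor $t^{-1}$ while preserving the spatial decay structure; iterating $m$ times delivers the full estimate $|D^\ell_x\partial_t^m\mathcal{O}_{kj}(x,t)|\lesssim t^{-m}(t^{\frac{1}{2\alpha}}+|x|)^{-3-\ell}$, which completes the proof.
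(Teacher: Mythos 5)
Your proposal is correct in outline, but it takes a genuinely different route from the paper's. The paper proves the lemma only for $\alpha=1$ (citing \cite{Cao} for $1/2<\alpha<1$) and exploits the radial symmetry of the heat kernel: writing $\Theta=\mathbb{E}\ast G_1$ as a radial function, it solves the ODE $\Theta''+\tfrac{2}{r}\Theta'=-G_1(r)$ by one integration, obtaining $\Theta'(r)=-\tfrac{1}{4\pi}r^{-2}+r^{-2}\phi_1(r)$ with $\phi_1(r)=\int_r^\infty s^2G_1(s)\,\mathrm{d}s$, and then differentiates explicitly to read off both the decomposition $\mathcal{O}_{kj}=\partial_k\partial_j\mathbb{E}+\Psi_{kj}$ and the Gaussian decay of $\Psi_{kj}$ in closed form, with no singular-integral analysis at all. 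You instead work directly with the convolution $\partial_k\partial_j\mathbb{E}\ast G^{(\alpha)}(\cdot,1)$, subtract the leading term using $\int G^{(\alpha)}=1$, and control the remainder by Taylor expansion, the vanishing first moment, tail integrals of $G^{(\alpha)}$, and a near-diagonal Calder\'on--Zygmund cancellation. Your method has the advantage of treating $\alpha=1$ and $1/2<\alpha<1$ uniformly and self-containedly (the moment integrals $\int|z|G^{(\alpha)}$, $\int_{|z|\le R}|z|^2G^{(\alpha)}$ converge or grow at exactly the rates you state precisely because $\alpha>1/2$), at the price of the delicate near-diagonal bookkeeping you flag --- which must also account for the distributional delta in $\partial_k\partial_j\mathbb{E}$ when the derivatives are moved onto $\mathbb{E}$, and which you only sketch. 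The paper's approach buys an explicit formula for $\Psi_{kj}$ and an essentially computation-free proof, but is presented only for the Gaussian case. Your final rescaling and time-derivative arguments match the paper's use of $\mathcal{O}_{kj}(x,t)=t^{-\frac{3}{2\alpha}}\mathcal{O}_{kj}(x/t^{\frac{1}{2\alpha}},1)$, and you correctly note that the pointwise bound near $x=0$ must come from the smoothness of the full profile $\mathcal{O}_{kj}(\cdot,1)$ rather than from either singular summand separately.
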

For proof, please see \cite{Br,Br1} for the case $\alpha=1$, and \cite{Cao} for the case $1/2<\alpha<1$.  For  completeness, we give an alternate proof for the case $\alpha=1$, which is more direct and and simpler. \vskip 0.1in

\begin{proof}[Proof of Lemma \ref{L4.1}]\;
 Let $G(x,t)=(4\pi t)^{-\frac{3}{2}}e^{-\frac{x^{2}}{4t}}$  be the kernel of heat operator, $\mathbb{P}$ be  the Leray projector,
 then
 $$\mathcal{O}(x,t)=(\mathcal{O}_{kj}(x,t))_{1\leq j,k\leq 3}=\mathbb{P} G(x,t).$$
Thus, from the definition of the Leray projector,
$$
\mathcal{O}_{kj}(x,1)=\delta_{kj}G_{1}(x)+\partial_{k}\partial_{j}\Big(\mathbb{E}(\cdot)\ast G_{1}(\cdot)\Big)(x)
$$
 with $G_{1}(x)\triangleq G(x,1)$.  Set  $\Theta(x)=(\mathbb{E}(\cdot)\ast G_{1}(\cdot))(x)$, it is easy to see $\Theta(x)=\Theta(|x|)\in C^{\infty}(\R^{3})$ satisfying
$$
-\Delta\Theta(x)=G_{1}(|x|),\ \ \ x\in \R^{3}.
$$
This, due to spherically symmetric of $\Theta$, can be rewritten as
$$
\Theta''(r)+\frac{2}{r}\Theta'(r)=-G_{1}(r),\ \ \ r=|x|\ \ \  \Psi'(0)=0.
$$
Integrating this equation from 0 to $r$, we have from the  fact $\int_{\R^{3}}G_{1}(x){\rm d}x=1$ that
\begin{align}
\nonumber
\Theta'(r)&=-r^{-2}\int_{0}^{r} s^{2}G_{1}(s){\rm d}s=-\frac{1}{4\pi}r^{-2}+r^{-2}\int_{r}^{+\infty}s^{2}G_{1}(s){\rm d}s\\&
\triangleq-\frac{1}{4\pi}r^{-2}+r^{-2}\phi_{1}(r),
\label{Integrate}
\end{align}
with $\phi_{1}(r)=\int_{r}^{+\infty}s^{2}G_{1}(s)\,{\rm d}s$.

We directly calculus to obtain
$$
\partial_{k}\Theta(x)=\Theta'(r)\frac{x_{k}}{r}=-\frac{1}{4\pi}x_{k}r^{-3}+r^{-3}\phi_{1}(r)x_{k}
$$
and
\begin{align*}
\partial_{k}\partial_{j}\Theta(x)&=\frac{1}{4\pi}\cdot\frac{3x_{k}x_{j}-\delta_{kj}r^{2}}{r^{5}}+
\frac{\delta_{kj}r^{2}-3x_{k}x_{j}}{r^{5}}\phi_{1}(r)-G_{1}(r)\frac{x_{k}x_{j}}{r^{2}}\\&
\triangleq\partial_{k}\partial_{j}\mathbb{E}+\Psi_{kj}(x)-G_{1}(r)\delta_{kj}.
\end{align*}
Thus,
\begin{align*}
\mathcal{O}_{kj}(x,1)=\partial_{k}\partial_{j}\mathbb{E}+\Psi_{kj}(x)=\frac{1}{4\pi}\cdot\frac{3x_{k}x_{j}-\delta_{kj}|x|^{2}}{|x|^{5}}+\Psi_{kj}(x)
\end{align*}
with
$$
|D_{x}^{\ell}\Psi_{kj}(x)|\leq C e^{-c|x|^{2}} \ \ \ \mbox{as}\ \ |x|\to\infty
$$
for $\ell\in \mathbb{Z}^{+}$.
From the relation $\mathcal{O}_{kj}(x,t)=t^{-\frac{3}{2}}\mathcal{O}_{kj}(x/t^{\frac12},1)$,
we obtain \eqref{Ossen} and finish the  proof of Lemma \ref{L4.1}.
\end{proof}

As a direct consequence of Lemma \ref{L4.1},  we immediately obtain the similar structure to the distribution kernel $F(x,t)$
of $e^{-(-\Delta)^{\alpha}t}\mathbb{P}{\rm div}(\cdot)$:
$$
\Upsilon(x,t)=(\Upsilon_{hkj}(x,t))_{1\leq j,h,k\leq 3}\triangleq(\partial_{h}\mathcal{O}_{kj}(x,t))_{1\leq j,h,k\leq 3}.
$$
It is clear that the $\Upsilon(x,t)$ possesses the self-similar property
$$
\Upsilon(x,t)=t^{-\frac{3+1}{2\alpha}}\Upsilon(x/t^{\frac{1}{2\alpha}},1).
$$
Besides, the $\Upsilon(x,t)$ has following properties:
\begin{lemma}\label{C4.1}
The profile $\Upsilon(x,1)$ satisfies the following pointwise decay estimate	
$$
|\Upsilon(x,1)|\leq C(1+|x|)^{-4} \ \ \ \mbox{for}\ \ x\in\R^{3}.
$$
Moreover, $\Upsilon(x,t)$ has the same structure as the Oseen kernel $\mathcal{O}$
\begin{align*}
\Upsilon_{hkj}(x,t)&=\partial_{h}\partial_{k}\partial_{j}\mathbb{E}+t^{-\frac{3}{2\alpha}}\partial_{h}\Psi_{kj}\left(x/t^{\frac{1}{2\alpha}}\right)\\&
	 =\frac{1}{4\pi}\cdot\partial_{h}\frac{3x_{k}x_{j}-\delta_{kj}|x|^{2}}{|x|^{5}}+t^{-\frac{3}{2\alpha}}\partial_{h}\Psi_{kj}\left(x/t^{\frac{1}{2\alpha}}\right)\\&
	\triangleq \mathcal{F}_{hkj}+t^{-\frac{3}{2\alpha}}\partial_{h}\Psi_{kj}\left(x/t^{\frac{1}{2\alpha}}\right)
	\end{align*}
	and $\mathcal{F}=(\mathcal{F}_{hkj})_{1\leq j,h,k\leq 3}$, usually called as a three-order tensor in $\R^{3}$, possesses the following cancellation properties
	\begin{align}\label{cancel}
	\int_{\mathbb{S}^{2}}\mathcal{F}(x) \;{\rm d}\mathbb{S}=0,\ \ \int_{\mathbb{S}^{2}}x_{i}\mathcal{F}(x) \;{\rm d}\mathbb{S}=0,
	\ i=1,2,3.
	\end{align}
\end{lemma}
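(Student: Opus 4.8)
The plan is to read off the decomposition and the pointwise bound directly from Lemma \ref{L4.1}, and then to verify the two moment identities \eqref{cancel} by an explicit computation of the restriction of $\mathcal{F}$ to $\mathbb{S}^2$. Since $\Upsilon_{hkj}=\partial_h\mathcal{O}_{kj}$, I would simply differentiate the representation $\mathcal{O}_{kj}(x,t)=\partial_k\partial_j\mathbb{E}+t^{-\frac{3}{2\alpha}}\Psi_{kj}(x/t^{\frac{1}{2\alpha}})$ of Lemma \ref{L4.1}: the $\partial_k\partial_j\mathbb{E}$ part is $t$-independent and contributes $\partial_h\partial_k\partial_j\mathbb{E}=:\mathcal{F}_{hkj}$, while the chain rule applied to the $\Psi$-part yields $t^{-\frac{3}{2\alpha}}\partial_h\big[\Psi_{kj}(x/t^{\frac{1}{2\alpha}})\big]$, which is precisely the claimed decomposition (and, upon extracting the scaling factor, reproduces the self-similarity $\Upsilon(x,t)=t^{-\frac{4}{2\alpha}}\Upsilon(x/t^{\frac{1}{2\alpha}},1)$). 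For the pointwise decay I would invoke the derivative estimate of Lemma \ref{L4.1} with $\ell=1$, $m=0$, $t=1$, giving $|\Upsilon(x,1)|=|D_x\mathcal{O}(x,1)|\le C(1+|x|)^{-4}$ immediately.

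The only part carrying real content is the cancellation \eqref{cancel}. I would first make $\mathcal{F}_{hkj}=\frac{1}{4\pi}\partial_h\partial_k\partial_j|x|^{-1}$ explicit by differentiating the second-derivative formula $\partial_k\partial_j\mathbb{E}=\frac{1}{4\pi}\frac{3x_kx_j-\delta_{kj}|x|^2}{|x|^5}$ once more, which produces the expression (homogeneous of degree $-4$)
\[
\mathcal{F}_{hkj}(x)=\frac{1}{4\pi}\,\frac{3|x|^2\big(\delta_{hk}x_j+\delta_{hj}x_k+\delta_{kj}x_h\big)-15\,x_hx_kx_j}{|x|^{7}}.
\]
Restricting to $|x|=1$ expresses $\mathcal{F}_{hkj}(\omega)$ as a sum of a linear and a cubic monomial in the components of $\omega$. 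The identity $\int_{\mathbb{S}^2}\mathcal{F}\,\mathrm{d}\mathbb{S}=0$ then follows from parity, since each surviving monomial is odd in $\omega$ and the sphere is symmetric. For the weighted identity $\int_{\mathbb{S}^2}\omega_i\mathcal{F}\,\mathrm{d}\mathbb{S}=0$ I would insert the standard surface moments $\int_{\mathbb{S}^2}\omega_a\omega_b\,\mathrm{d}\mathbb{S}=\frac{4\pi}{3}\delta_{ab}$ and $\int_{\mathbb{S}^2}\omega_a\omega_b\omega_c\omega_d\,\mathrm{d}\mathbb{S}=\frac{4\pi}{15}(\delta_{ab}\delta_{cd}+\delta_{ac}\delta_{bd}+\delta_{ad}\delta_{bc})$ and check that the linear terms contribute $\delta_{hk}\delta_{ij}+\delta_{hj}\delta_{ik}+\delta_{kj}\delta_{ih}$ while the cubic term $-15\,x_hx_kx_j$ contributes exactly its negative, so that the six Kronecker products cancel termwise.

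Conceptually, the weighted identity is forced: $\mathcal{F}_{hkj}$ is harmonic and homogeneous of degree $-4$ on $\R^3\setminus\{0\}$, so its restriction to $\mathbb{S}^2$ is a degree-$3$ spherical harmonic, hence $L^2(\mathbb{S}^2)$-orthogonal to the constant $1$ and to the degree-$1$ functions $\omega_i$; I would mention this as the structural explanation but present the elementary moment computation, which is self-contained. I expect the only real obstacle to be the bookkeeping in the weighted integral, where one must correctly track the six index contractions and confirm the termwise cancellation — this is the single place where a sign or index error could slip in.
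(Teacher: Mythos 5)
Your proposal is correct and follows essentially the same route as the paper: the identical explicit formula $\mathcal{F}_{hkj}=\frac{1}{4\pi}\big(3|x|^{2}(\delta_{hk}x_j+\delta_{hj}x_k+\delta_{kj}x_h)-15x_hx_kx_j\big)|x|^{-7}$, oddness/symmetry of the sphere for the first identity, and the second and fourth surface moments to check the termwise cancellation in the weighted identity (the paper derives $\int_{\mathbb{S}^2}x_i^4\,\mathrm{d}\mathbb{S}=\frac{4\pi}{5}$ and $\int_{\mathbb{S}^2}x_i^2x_j^2\,\mathrm{d}\mathbb{S}=\frac{4\pi}{15}$ via Gauss--Green, which is equivalent to your general fourth-moment formula). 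Your closing observation that $\mathcal{F}_{hkj}$ restricts to a degree-$3$ spherical harmonic, hence is $L^2(\mathbb{S}^2)$-orthogonal to constants and to the $\omega_i$, is a pleasant structural explanation not in the paper, but the core computation is the same.
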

\begin{proof}
We only need to prove \eqref{cancel}.  The proof is standard, and we give a proof for completeness. From
\begin{align*}
\mathcal{F}&=(\mathcal{F}_{hkj})=\Big(\frac{1}{4\pi}\cdot\partial_{h}\frac{3x_{k}x_{j}-\delta_{kj}|x|^{2}}{|x|^{5}}\Big)\\&
=\left(\frac{3}{4\pi}\frac{|x|^{2}(\delta_{jh}x_{k}+\delta_{hk}x_{j}+\delta_{kj}x_{h})-5x_{j}x_{h}x_{k}}{|x|^{7}}\right),
\end{align*}
 and  the symmetry of spherical surface, it is easy to see that
$$\int_{\mathbb{S}^{2}}\mathcal{F}(x) \;{\rm d}\mathbb{S}=0.$$
Besides, note that
$$
\int_{\mathbb{S}^{2}}x_{i}^{2}\,{\rm d}\mathbb{S}=\frac{1}{3}\int_{\mathbb{S}^{2}}|x|^{2}\,{\rm d}x=\frac{4\pi}{3}
$$
and with help of Gauss-Green formula, one instantly derives
$$
\int_{\mathbb{S}^{2}}x_{i}^{4}\,{\rm d}\mathbb{S}_{x}=\int_{\mathbb{B}_{1}}\partial_{i}(x_{i}^{3})\,{\rm d}x=\frac{4\pi}{5}
$$
and
$$
\int_{\mathbb{S}^{2}}x^{2}_{i}x^{2}_{j}\,{\rm d}\mathbb{S}_{x}
=\int_{\mathbb{B}_{1}}x^{2}_{i}\partial_{j}x_{j}\,{\rm d}x=\frac{4\pi}{15}.
$$
The above two identities imply  for $i=1,2,3$
$$
\int_{\mathbb{S}^{2}}x_{i}\mathcal{F}_{hkj} \;{\rm d}\mathbb{S}_{x}
=\frac{3}{4\pi}\int_{\mathbb{S}^{2}}x_{i}\big\{(\delta_{jh}x_{k}+\delta_{hk}x_{j}+\delta_{kj}x_{h})-5x_{j}x_{h}x_{k}\big\}\,{\rm d}\mathbb{S}_{x}=0.
$$
This verifies the  second identity of \eqref{cancel}.
\end{proof}

\section{Global regularity of weak solutions}\label{sec-3}
\setcounter{equation}{0}
\setcounter{equation}{0}
\setcounter{equation}{0}

In this section, we apply the bootstrap argument to show the high regularity of a weak solution established in Theorem \ref{thm-1}. To do that, we divide the proof into four steps as following.

Step 1:  With the help of method of vanishing viscosity, we will establish $H_{\omega}^\alpha(\R^3)$-estimate for weak solution $V$ by  choosing the suitable test function in the weak sense.

Step 2:
We will give $B^{2\alpha}_{2,\infty}(\R^3)$-estimate for weak solution $V$  based on the differences characterization of Besov spaces.

Step 3:  By the bootstrap argument, we will further establish high inequality $H^{1+\alpha}(\R^3)$-estimate for weak solution $V$.

Step 4: In view of difference, we will show high regularity for weak solution $V$ in the weighted Hilbert space $H_{\omega}^{1+\alpha}(\R^3)$.

\subsection{$	\big\|V\big\|_{H^\alpha_{\omega}(\R^3)}$-esitmate}
In this step, we are going to show  uniform estimate  for the weak solution $V$ in the weighted space $H^\alpha_{\omega}(\R^3)$. Specifically,
\begin{proposition}\label{prop3.1}
	Let $\alpha\in(5/6,1)$, and $V\in H_{\sigma}^{\alpha}(\R^{3})$ be the weak solution which was established in Theorem~\ref{thm-1}. Then there exists $C>0$ such that
	\begin{equation}\label{add-eq-002}
	\big\|\,V\big\|_{H_{\omega}^\alpha(\R^3)}\leq C\left(U_0\right).
	\end{equation}
\end{proposition}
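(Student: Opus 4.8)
The plan is to run a weighted energy estimate against a regularized power weight and then remove the regularization. Since $V\in H^\alpha(\R^3)$ with $\alpha<1$ is not regular enough to be multiplied by an unbounded weight and still serve as an admissible test function in \eqref{eq.weak-ESTIMATE}, I would first work at the level of the viscous/Galerkin approximations furnished by the construction in Theorem~\ref{thm-1} (which are smooth and fast-decaying), derive the estimate with constants independent of both the approximation parameter and the regularization parameter $\varepsilon$, and only afterwards pass to the limits. For fixed $\varepsilon>0$ set the bounded Lipschitz weight
\[
w_\varepsilon(x)=\frac{1+|x|}{1+\varepsilon|x|^2},\qquad w_\varepsilon\nearrow 1+|x|\sim\langle x\rangle \ \text{ as }\ \varepsilon\to0,
\]
and test with $\varphi=w_\varepsilon V$. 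One checks once and for all that $\|\nabla w_\varepsilon\|_{L^\infty}\le C$ and $|x\cdot\nabla w_\varepsilon|\le Cw_\varepsilon$ uniformly in $\varepsilon$, which both makes $\varphi$ admissible and renders every error constant below $\varepsilon$-independent. Since $\mathrm{div}\,V=0$ one has $\mathrm{div}\,\varphi=\nabla w_\varepsilon\cdot V$.

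The heart of the argument is the fractional term $\int_{\R^3}\Lambda^\alpha V:\Lambda^\alpha(w_\varepsilon V)\,\dd x$. Rather than using the crude commutator bound of Lemma~\ref{lem-Comm} --- whose constant degenerates as $\varepsilon\to0$ because $\|w_\varepsilon\|_{\dot C^\beta}$ is not uniformly bounded --- I would pass to the Gagliardo quadratic form of $\Lambda^{2\alpha}$ and symmetrize. With $a=V(x),\,b=V(y),\,p=w_\varepsilon(x),\,q=w_\varepsilon(y)$ and the identity $(a-b)\cdot(pa-qb)=\tfrac{p+q}{2}|a-b|^2+\tfrac{p-q}{2}(|a|^2-|b|^2)$ this yields, with $c>0$ the normalizing constant of the seminorm,
\[
\int_{\R^3}\Lambda^\alpha V:\Lambda^\alpha(w_\varepsilon V)\,\dd x
=\frac{c}{2}\iint\frac{\tfrac{w_\varepsilon(x)+w_\varepsilon(y)}{2}\,|V(x)-V(y)|^2}{|x-y|^{3+2\alpha}}\,\dd x\,\dd y
+\frac12\int_{\R^3}|V|^2\,\Lambda^{2\alpha}w_\varepsilon\,\dd x .
\]
The first term is a nonnegative weighted Gagliardo energy, comparable (modulo a controllable correction) to $\|\Lambda^\alpha V\|_{L^2_{w_\varepsilon}}^2$, so the entire difficulty is transferred to the pointwise size of $\Lambda^{2\alpha}w_\varepsilon$. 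Here the non-locality works in our favour: since $w_\varepsilon$ grows at most linearly, a direct estimate of the principal value gives $|\Lambda^{2\alpha}w_\varepsilon|\le C\langle x\rangle^{1-2\alpha}$ uniformly in $\varepsilon$, which is \emph{bounded} precisely because $2\alpha>1$, whence $\big|\int_{\R^3}|V|^2\Lambda^{2\alpha}w_\varepsilon\,\dd x\big|\le C\|V\|_{L^2}^2\le C(U_0)$.

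It remains to treat the drift, zeroth-order, pressure, convection and $U_0$ contributions. After integrating by parts, the scaling term $\frac{1}{2\alpha}\int x\cdot\nabla\varphi\cdot V$ and the zeroth-order term $-\frac{\alpha-2}{\alpha}\int V\cdot\varphi$ combine into $\frac{1-\alpha}{\alpha}\int w_\varepsilon|V|^2$ plus a remainder bounded by $\int w_\varepsilon|V|^2$ through $|x\cdot\nabla w_\varepsilon|\le Cw_\varepsilon$; the leading coefficient is nonnegative exactly because $\alpha<1$, which (together with the fractional term) supplies coercivity in $\|V\|_{L^2_{w_\varepsilon}}$. The pressure term is $\int P\,\nabla w_\varepsilon\cdot V\le C\|P\|_{L^2}\|V\|_{L^2}\le C(U_0)$ since $\nabla w_\varepsilon\in L^\infty$ and $\|P\|_{L^2}\le C(U_0)$. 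The convection $\int V\cdot\nabla\varphi\cdot V$ collapses after one integration by parts to $\tfrac12\int(\nabla w_\varepsilon\cdot V)|V|^2\le C\|V\|_{L^3}^3\le C(U_0)$ using $H^\alpha(\R^3)\hookrightarrow L^3(\R^3)$. In the $U_0$-terms I would integrate by parts to eliminate every factor $\nabla V$ (uncontrolled at this stage); the decays $|U_0|\le C\langle x\rangle^{1-2\alpha}$, $|\nabla U_0|\le C\langle x\rangle^{-2\alpha}$ of Lemma~\ref{Lfreeterm} then make all but the pure forcing harmless, because $w_\varepsilon\langle x\rangle^{-2\alpha}\lesssim1$, while the forcing $\int w_\varepsilon|U_0||\nabla U_0||V|$ is handled by Cauchy--Schwarz and Young, the weight $\int\langle x\rangle^{3-8\alpha}\,\dd x$ being integrable exactly when $\alpha>3/4$. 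Collecting everything and absorbing the small $\int w_\varepsilon|V|^2$ contributions into the coercive left-hand side gives $\|\Lambda^\alpha V\|_{L^2_{w_\varepsilon}}^2+\|V\|_{L^2_{w_\varepsilon}}^2\le C(U_0)$ uniformly in $\varepsilon$; letting $\varepsilon\to0$ and invoking Fatou yields \eqref{add-eq-002}.

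The main obstacle is the single non-local term $\int|V|^2\Lambda^{2\alpha}w_\varepsilon$: the whole estimate hinges on an $\varepsilon$-uniform pointwise bound for $\Lambda^{2\alpha}$ of a \emph{growing} weight, and it is exactly the hypo-dissipation threshold $2\alpha>1$ that keeps $\langle x\rangle^{1-2\alpha}$ bounded. The secondary constraint $\alpha>3/4$ enters only through integrability of the self-interaction forcing $U_0\cdot\nabla U_0$; both thresholds are comfortably met in the stated range $\alpha\in(5/6,1)$.
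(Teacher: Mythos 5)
Your overall strategy coincides with the paper's: test the viscous approximation against $\varphi=w_\varepsilon V$ with $w_\varepsilon=\frac{1+|x|}{1+\varepsilon|x|^2}$, extract the coercive combination $\|\Lambda^\alpha V\|^2_{L^2_{w_\varepsilon}}+\frac{1-\alpha}{\alpha}\|V\|^2_{L^2_{w_\varepsilon}}$, and let $\nu,\varepsilon\to0$. Where you genuinely diverge is the fractional term. The paper writes $w_\varepsilon V=h_\varepsilon\,(h_\varepsilon V)$ with $h_\varepsilon=\sqrt{w_\varepsilon}$ and controls the two commutators $[h_\varepsilon,\Lambda^\alpha]$ via Lemma \ref{lem-Comm}; the whole point is that the \emph{square root} of the weight is uniformly in $\dot C^{3/4}\cap\dot W^{1,\infty}$ (estimate \eqref{estimate-h}), so the commutator constants do not degenerate --- your objection about degenerating H\"older norms applies to $w_\varepsilon$ itself, not to $h_\varepsilon$. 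Your alternative (Gagliardo symmetrization plus a pointwise bound on $\Lambda^{2\alpha}w_\varepsilon$) is a legitimate and rather transparent route: the algebraic identity is correct, and the required $\varepsilon$-uniform bound is provable by the usual near/far splitting with $\|D^2w_\varepsilon\|_{L^\infty(B(x,\langle x\rangle/2))}\lesssim\langle x\rangle^{-1}$. Three cautions: $1+|x|$ is only Lipschitz at the origin, so $\Lambda^{2\alpha}w_\varepsilon$ behaves like $|x|^{1-2\alpha}$ there rather than being bounded (harmless, since $|V|^2|x|^{1-2\alpha}$ is locally integrable by $H^\alpha\hookrightarrow L^{6/(3-2\alpha)}$, or simply smooth the weight to $\langle x\rangle$); your ``comparable modulo a controllable correction'' step from the weighted Gagliardo energy back to $\|\Lambda^\alpha V\|_{L^2_{w_\varepsilon}}$ quietly reuses the very commutator bound for $h_\varepsilon$ you set out to avoid; and the drift/zeroth-order bookkeeping cannot rest on the crude bound $|x\cdot\nabla w_\varepsilon|\le Cw_\varepsilon$, because the surviving coefficient $\frac{1-\alpha}{\alpha}$ comes from an exact cancellation among terms of size $\frac{1}{2\alpha}\int w_\varepsilon|V|^2$; you must compute $x\cdot\nabla w_\varepsilon=|x|g_\varepsilon^2-2\varepsilon|x|^2g_\varepsilon^2w_\varepsilon$ exactly and check, as in \eqref{eq-W-1}, that the leftover pieces are either positive or controlled by the unweighted $\|V\|^2_{L^2}$.

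The one genuine gap is the pressure term. You bound it by $\|P\|_{L^2(\R^3)}\|V\|_{L^2(\R^3)}$, but the quantitative bound $\|P\|_{L^2(\R^3)}\le C(U_0)$ is not available on the whole range $\alpha\in(5/6,1)$: the contribution of $U_0\otimes U_0$ to $P$ decays only like $\langle x\rangle^{2-4\alpha}$, which fails to be square integrable for $\alpha\le 7/8$. This is exactly why the paper splits $P=P_1+P_2$ with $P_2=\frac{\mathrm{div}}{-\Delta}\big(U_0\cdot\nabla U_0\big)$, integrates by parts to trade $P_2\,\mathrm{div}\,\varphi$ for $\nabla P_2\cdot\varphi$, and invokes the $A_2$-weighted Calder\'on--Zygmund estimate \eqref{eq.star} to control $\big\|\sqrt{1+|\cdot|}\,\nabla P_2\big\|_{L^2(\R^3)}$ by $\big\|\sqrt{1+|\cdot|}\,U_0\cdot\nabla U_0\big\|_{L^2(\R^3)}$, which is finite for $\alpha>3/4$. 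You need this (or an equivalent) detour for the $U_0\otimes U_0$ part of the pressure; with it, the rest of your argument goes through.
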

Now, let's begin to show the  desired estimate in Proposition \ref{prop3.1} by choosing a suitable test function in the weak sense.  Since $V\in H^{\alpha}(\R^3)$ with $\alpha\in]0,1[$,
we are lack of the information of derivative of $V$. Then, it seems difficult to modify $V$ directly to the required  test function. To overcome that difficulty,  we apply a fact, with the method of vanishing viscosity, that the solution established in Theorem \ref{thm-1} is a weak limit of weak solution to the following equations.
 \begin{align}\label{Ev}
 \begin{split}
 -\nu\Delta V_{\nu}+ (-\Delta)^{\alpha}V_{\nu}- \tfrac{2\alpha-1}{2\alpha}V_{\nu}-\tfrac{1}{2\alpha}x\cdot \nabla V_\nu+\nabla P_\nu=&-U_{0}\cdot\nabla U_{0}
 -(U_{0}+V_\nu)\cdot\nabla V_\nu\\
&-V_\nu\cdot\nabla U_{0},
\end{split}
\end{align}
\begin{equation}\label{eq.incomv}
\textnormal{div}\,V_{\nu}=0.
\end{equation}
 In \cite{LXZ}, we have proved that equations \eqref{Ev}-\eqref{eq.incomv} admit at least  one weak solution $V_\nu$ such that
 \begin{equation}\label{eq-energy-v}
 \nu\|V_\nu\|^2_{\dot{H}^1(\R^3)}+\|V_\nu\|^2_{\dot{H}^\alpha(\R^3)}+\|V_\nu\|^2_{L^2(\R^3)}\leq C(U_0),
 \end{equation}
and there exists a pressure $P_\nu\in L^2(\R^3)$  governed by
\begin{equation*}\label{eq.PE}
P_\nu=\sum_{i,j=1}^3\frac{1}{4\pi}\partial^2_{x_i,x_j}\int_{\mathbb{R}^3}\frac{1}{|x-y|}\big(V_\nu^iV_\nu^j+U^i_0 V_\nu^j+V_\nu^iU^j_0+U^i_0 U^j_0\big)\,\mathrm{d}y
\end{equation*}
such that for all vector fields $\varphi\in H^1(\mathbb{R}^3)$ with $\big\||\cdot|^{\frac12}\varphi\big\|_{L^2(\mathbb{R}^3)}<+\infty,$
the couple $(V_\nu,P_\nu)$ satisfies
\begin{align}
&\nu\int_{\mathbb{R}^3}\nabla V_\nu:\nabla\varphi\,\mathrm{d}x+\int_{\mathbb{R}^3}\Lambda^\alpha V_\nu:\Lambda^\alpha\varphi\,\mathrm{d}x-\frac{1}{2\alpha}\int_{\mathbb{R}^3}x\cdot \nabla V_\nu\cdot\varphi\,\mathrm{d}x-\frac{2\alpha-1}{2\alpha}\int_{\mathbb{R}^3} V_\nu\cdot\varphi\,\mathrm{d}x\nonumber\\
=&\int_{\mathbb{R}^3}P_\nu\, \mathrm{div}\,\varphi\,\mathrm{d}x+\int_{\mathbb{R}^3}V_\nu\cdot \nabla \varphi\cdot V_\nu\,\mathrm{d}x-\int_{\mathbb{R}^3}U_0\cdot \nabla  V_\nu\cdot \varphi\,\mathrm{d}x-\int_{\mathbb{R}^3}(V_\nu+U_0)\cdot \nabla  U_0\cdot \varphi\,\mathrm{d}x.
\label{eq.weak}
\end{align}
In the following subsection, we denote $V_\nu$ by $V$   for clarity.

Letting $h_\varepsilon(x):=\sqrt{\frac{1+|x|}{1+\varepsilon|x|^2}}$,
$W_{\varepsilon}(x):=h_\varepsilon^2V(x)$ and $V_\varepsilon(x):=h_\varepsilon V(x)$ with $\varepsilon\in]0,1]$,
we easily find by using the energy estimate \eqref{eq-energy-v} that $W_{\varepsilon}\in H^1(\R^3)$ and satisfies $\big\|\langle\cdot\rangle W_\varepsilon\big\|_{L^2(\mathbb{R}^3)}<+\infty$ for each $\varepsilon\in]0,1]$.
Then we readily have by taking $\varphi(x)=W_{\varepsilon}(x)$ in \eqref{eq.weak} that
\begin{equation}\label{eq.weak-i}
\begin{split}
&\nu\int_{\mathbb{R}^3}\nabla V:\nabla W_{\varepsilon}\,\mathrm{d}x+\int_{\mathbb{R}^3}\Lambda^\alpha V:\Lambda^\alpha W_{\varepsilon}\,\mathrm{d}x-\frac{1}{2\alpha}\int_{\mathbb{R}^3}x\cdot \nabla V\cdot W_{\varepsilon}\,\mathrm{d}x\\
&-\frac{2\alpha-1}{2\alpha}\int_{\mathbb{R}^3} V\cdot W_{\varepsilon}\,\mathrm{d}x-\int_{\mathbb{R}^3}P\, \mathrm{div}\,W_{\varepsilon}\,\mathrm{d}x\\
=&\int_{\mathbb{R}^3}V\cdot \nabla W_{\varepsilon}\cdot V\,\mathrm{d}x-\int_{\mathbb{R}^3}U_0\cdot \nabla  V\cdot W_{\varepsilon}\,\mathrm{d}x-\int_{\mathbb{R}^3}(V+U_0)\cdot \nabla  U_0\cdot W_{\varepsilon}\,\mathrm{d}x.
\end{split}
\end{equation}
By a  simple calculation, the third integrand in the left side of the above equality reduces to
\begin{align*}
-\frac{1}{2\alpha}\int_{\mathbb{R}^3}x\cdot \nabla V\cdot W_{\varepsilon}\,\mathrm{d}x
=&\frac{3}{4\alpha}\int_{\mathbb{R}^3}h^2_\varepsilon|V|^2\,\mathrm{d}x+\frac{1}{4\alpha}\int_{\mathbb{R}^3}|V|^2\,x\cdot \nabla h^2_\varepsilon\,\mathrm{d}x\\
=&\frac{3}{4\alpha}\big\|V_\varepsilon\big\|_{L^2(\R^3)}^2+\frac{1}{4\alpha}\int_{\mathbb{R}^3}|V|^2\,   {|x|}g_\varepsilon^2\,\mathrm{d}x-\frac{1}{2\alpha}\int_{\mathbb{R}^3}|V|^2\,     {\varepsilon|x|^2}g_\varepsilon^2h^2_\varepsilon \,\mathrm{d}x \\
=&\frac{1}{\alpha}\big\|V_\varepsilon\big\|_{L^2(\R^3)}^2-\frac{1}{4\alpha}\int_{\mathbb{R}^3}\frac{|V|^2}{1+\varepsilon|x|^2}\,\mathrm{d}x-\frac{1}{2\alpha}\int_{\mathbb{R}^3}|V_\varepsilon|^2\,    {\varepsilon|x|^2}g_\varepsilon^2\,\mathrm{d}x.
\end{align*}
Here and what in follows, we denote $g_\varepsilon=\frac{1}{\sqrt{1+\varepsilon|x|^2}}.$

Hence, we have
\begin{equation}\label{eq-W-1}
\begin{split}
&\frac{1}{2\alpha}\int_{\mathbb{R}^3}x\cdot \nabla V\cdot W_{\varepsilon}\,\mathrm{d}x-\frac{2\alpha-1}{2\alpha}\int_{\mathbb{R}^3} V\cdot W_{\varepsilon}\,\mathrm{d}x\\
=&\frac{1-\alpha}{\alpha}\big\|V_\varepsilon\big\|_{L^2(\R^3)}^2+\frac{1}{2\alpha}\int_{\mathbb{R}^3}|V_{\varepsilon}|^2\,   g_\varepsilon^2\,\mathrm{d}x-\frac{1}{4\alpha}\int_{\mathbb{R}^3}|V|^2\,   g_\varepsilon^2\,\mathrm{d}x.
\end{split}
\end{equation}
For the first integrand, we see that
\begin{equation}\label{eq-Lapace}
\begin{split}
&\nu\int_{\mathbb{R}^3}\nabla V:\nabla W_{\varepsilon}\,\mathrm{d}x\\
=&\nu\int_{\mathbb{R}^3}\left(\nabla V\cdot\nabla h_\varepsilon\right)\cdot V_{\varepsilon}\,\mathrm{d}x
 +\nu\big\|\nabla V_\varepsilon\big\|^2_{L^2(\R^3)}-\nu\int_{\mathbb{R}^3} \nabla h_\varepsilon\cdot \left(V\cdot\nabla V_{\varepsilon}\right)\,\mathrm{d}x.
 \end{split}
\end{equation}
Next, we tackle with the term involving the fractional operator.  After some simple computation we get
\begin{equation}\label{eq-w-2}
\begin{split}
&\int_{\mathbb{R}^3}\Lambda^\alpha V:\Lambda^\alpha W_{\varepsilon}\,\mathrm{d}x\\
=&\int_{\mathbb{R}^3}\Lambda^\alpha V:\big(h_{\varepsilon}\,\Lambda^\alpha   V_{\varepsilon} \big)\,\mathrm{d}x-\int_{\mathbb{R}^3}\Lambda^\alpha V:\big([h_{\varepsilon},\,\Lambda^\alpha]   V_{\varepsilon} \big)\,\mathrm{d}x\\
=&\big\|V_\varepsilon\big\|^2_{\dot{H}^\alpha(\R^3)}-\int_{\mathbb{R}^3}\big([h_\varepsilon,\,\Lambda^\alpha] V\big): \Lambda^\alpha V_{\varepsilon}  \,\mathrm{d}x-\int_{\mathbb{R}^3}\Lambda^\alpha V:\big([h_{\varepsilon},\,\Lambda^\alpha] V_{\varepsilon} \big)\,\mathrm{d}x.
\end{split}
\end{equation}
Inserting \eqref{eq-W-1}, \eqref{eq-Lapace} and \eqref{eq-w-2} into \eqref{eq.weak-i} leads to
\begin{equation}\label{eq.weak-ii}
\begin{split}
&\nu\big\|V_\varepsilon\big\|^2_{\dot{H}^1(\R^3)}+\big\|V_\varepsilon\big\|^2_{\dot{H}^\alpha(\R^3)}+\frac{1-\alpha}{\alpha}\big\|V_\varepsilon\big\|_{L^2(\R^3)}^2+\frac{1}{2\alpha}\int_{\mathbb{R}^3} \frac{|V_{\varepsilon}|^2}{1+\varepsilon|x|^2}\,\mathrm{d}x\\
=&\frac{1}{4\alpha}\int_{\mathbb{R}^3} \frac{|V|^2}{1+\varepsilon|x|^2}\,\mathrm{d}x+\int_{\mathbb{R}^3}\big([h_\varepsilon,\,\Lambda^\alpha] V\big): \Lambda^\alpha V_{\varepsilon}  \,\mathrm{d}x
+\int_{\mathbb{R}^3}\Lambda^\alpha V:\big([h_{\varepsilon},\,\Lambda^\alpha] h_{\varepsilon}V \big)\,\mathrm{d}x\\
&-\nu\int_{\mathbb{R}^3}\nabla V:\nabla h_\varepsilon V_{\varepsilon}\,\mathrm{d}x+\nu\int_{\mathbb{R}^3} \nabla h_\varepsilon V:\nabla V_{\varepsilon}\,\mathrm{d}x+\int_{\mathbb{R}^3}P\, \mathrm{div}\,W_{\varepsilon}\,\mathrm{d}x\\
&+\int_{\mathbb{R}^3}V\cdot \nabla W_{\varepsilon}\cdot V\,\mathrm{d}x-\int_{\mathbb{R}^3}U_0\cdot \nabla  V\cdot W_{\varepsilon}\,\mathrm{d}x-\int_{\mathbb{R}^3}(V+U_0)\cdot \nabla  U_0\cdot W_{\varepsilon}\,\mathrm{d}x
\triangleq\sum_{i=1}^{10}I_i.
\end{split}
\end{equation}
Now, let's to bound the terms in the right side of equality \eqref{eq.weak-ii}. For   $I_1$,  it is obvious that
\begin{equation}\label{eq-r-1}
I_1=\frac{1}{4\alpha}\int_{\mathbb{R}^3} \frac{|V|^2}{1+\varepsilon|x|^2}\,\mathrm{d}x\leq \frac{1}{4\alpha}\big\|V\big\|^2_{L^2(\R^3)}.
\end{equation}
By the H\"older inequality and Lemma \ref{lem-Comm}, one has
\begin{equation}\label{eq-3.1-1}
\begin{split}
I_2\leq&\big\| [h_\varepsilon,\,\Lambda^\alpha] V\big\|_{L^2(\R^3)}\big\|\Lambda^\alpha V_{\varepsilon}\big\|_{L^2(\R^3)}\\
\leq& C\max\left\{\|h_\varepsilon\|_{\dot{C}^{\frac34}(\R^3)},\,\|h_\varepsilon\|_{\dot{W}^{1,\infty}(\R^3)}\right\}\|V\|_{L^2(\R^3)}\|V_{\varepsilon}\|_{\dot{H}^\alpha(\R^3)}.
\end{split}
\end{equation}
This require to bound both terms  concerning on $h_\varepsilon$ in \eqref{eq-3.1-1}. We compute
	\begin{align*}
	\nabla h_\varepsilon(x)=\frac{x}{2|x|\sqrt{1+|x|}}\frac{1}{\sqrt {1+\varepsilon |x|^2}}-\frac{\varepsilon x\sqrt{1+|x|}}{(1+\varepsilon |x|^2)^{\frac32}}.
	\end{align*}
It follows that for  each $\varepsilon\in]0,1],$
	\begin{align*}
	\big|	\nabla h_\varepsilon(x)\big|\leq\frac{1}{2\sqrt{1+|x|}}+\frac{4\varepsilon^\frac14 }{\sqrt{1+\varepsilon |x|}}.
	\end{align*}
	This inequality enables us to infer that
	\begin{align*}
	\|\nabla h_\varepsilon\|_{L^p(\R^3)}\leq \left(1/2+4\varepsilon^{\frac14-\frac3p}\right)\bigg(\int_{ \mathbb { R } ^ { 3 } }\frac{1}{(1+|x|)^{\frac{p}{2}}}\,\textnormal{d}x\bigg)^{\frac{1}{p}}.
	\end{align*}
	Thus for $\forall p\geq 12$ and $\forall\varepsilon\in]0,1],$ there exists a absolute constant $C_0>0$ such that
	\begin{align*}
		\|\nabla h_\varepsilon\|_{L^p(\R^3)}\leq \left(1/2+4\right)\bigg(\int_{ \mathbb { R } ^ { 3 } }\frac{1}{(1+|x|)^{\frac{p}{2}}}\,\textnormal{d}x\bigg)^{\frac{1}{p}}\leq C_0.
	\end{align*}
	This together with the embedding  relation that $\dot{W}^{1,p}(\R^3)\hookrightarrow C^{1-\frac3p}(\R^3)$ yield that there exists a constant $C>0$ independent of $\varepsilon $ such that for $\forall\varepsilon\in]0,1]$ and $\forall s\in[3/4,1[$,
	\begin{equation}\label{estimate-h}
	\|h_\varepsilon\|_{C^s(\R^3)}+\| h_\varepsilon\|_{\dot{W}^{1,\infty}(\R^3)}\leq C.
	\end{equation}
	Inserting estimate \eqref{estimate-h} into \eqref{eq-3.1-1}, we immediately have
\begin{equation}\label{eq-r-2}
 I_2\leq  C\|V\|_{L^2(\R^3)}\|V_{\varepsilon}\|_{\dot{H}^\alpha(\R^3)}.
\end{equation}
In the same way, $I_3$ can be bounded as follows
\begin{equation}\label{eq-r-3}
\begin{split}
I_3\leq C\big\|V^\varepsilon \big\|_{L^2(\R^3)}\|V\|_{\dot{H}^\alpha(\R^3)}
\leq C\|V\|_{\dot{H}^\alpha(\R^3)}^2+\frac{1-\alpha}{8\alpha}\big\|V_\varepsilon \big\|^2_{L^2(\R^3)}.
\end{split}
\end{equation}
Next, we turn to estimate   $I_4$ and $I_5$. We have by the H\"older inequality and Cauchy-Schwarz inequality that
\begin{equation}\label{r-d-1}
\begin{split}
I_4=-\nu\int_{\mathbb{R}^3}\nabla V:(h_\varepsilon\nabla h_\varepsilon)V\,\mathrm{d}x
\le&\nu\|\nabla V\|_{L^2(\R^3)}\|h_\varepsilon\nabla h_\varepsilon\|_{L^\infty(\R^3)}\|V\|_{L^2(\R^3)}\\
\leq &C\nu\|V\|^2_{L^2(\R^3)}+\nu\|\nabla V\|^2_{L^2(\R^3)}.
\end{split}
\end{equation}
In the last line of \eqref{r-d-1}, we have used the following equality
\begin{equation}\label{eq.h-d-h}
h_\varepsilon\nabla h_\varepsilon=\frac{x}{2|x| (1+\varepsilon |x|^2)}-\frac{\varepsilon x(1+|x|)}{(1+\varepsilon |x|^2)^{2}}.
\end{equation}
Similarly, by the H\"older inequality and estimate \eqref{estimate-h},  we obtain
\begin{equation}\label{r-d-2}
\begin{split}
I_5\le \nu\|\nabla h_\varepsilon\|_{L^\infty(\R^3)} \| V\|_{L^2(\R^3)}\|\nabla V_{\varepsilon}\|_{L^2(\R^3)}
\leq  C\nu\|V\|^2_{L^2(\R^3)}+\frac14\nu\|\nabla V_\varepsilon\|^2_{L^2(\R^3)}.
\end{split}
 \end{equation}
Now we come back to tackle with the term involving the pressure. Thanks to
\begin{equation*}
-\Delta P=\text{div}\,\left(U_{0}\cdot\nabla U_{0}
+(U_{0}+V)\cdot\nabla V+V\cdot\nabla U_{0}\right),
\end{equation*}
one writes
\[P=\frac{\text{div}}{-\Delta}\big((U_{0}+V)\cdot\nabla V+V\cdot\nabla U_{0}\big)+\frac{\text{div}}{-\Delta}\big(U_0\cdot\nabla U_{0}\big):=P_1+P_2. \]
Moreover, we have by the H\"older inequality that
\begin{equation}
\label{eq.I-2}
\|P_1\|_{L^2(\R^3)}\leq C\left(\|V\|_{L^4(\R^3)}^2+\|U_0\|_{L^\infty(\R^3)}\|V\|_{L^2(\R^3)}\right)
\end{equation}
and
\begin{equation}\label{eq-p-2}
\big\|\nabla P_2\big\|_{L^2(\R^3)}\leq C\|U_0\|_{L^\infty(\R^3)}\|\nabla U_0\|_{L^2(\R^3)}.
\end{equation}
Note that
$$
\sup_{x\in\R^3,R>0}\left(\frac{1}{|\mathbb{B}_R(x)|}\int_{B(x,R)}(1+|y|)
\,\mathrm{d}y\right)^{1/2}\left(\frac{1}{|\mathbb{B}_R(x)|}\int_{\mathbb{B}_R(x)}(1+|y|)^{-1}\,\mathrm{d}y\right)^{1/2}<\infty,
$$
for its proof please see \cite[Lemma 1]{Fe},  and  then  $1+|x|$ belongs to $A_2$-class,  therefore by \cite[Theorem 5.4.2]{stein-2}  that
\begin{equation}\label{eq.star}
\int_{\mathbb{R}^3}|\nabla P_2|^2\,(1+|x|)\mathrm{d}x\leq C\int_{\mathbb{R}^3} |U_0\cdot\nabla U_0|^2(x)(1+|x|)\,\mathrm{d}x.
\end{equation}
Integrating by parts, we see that
\begin{equation*}
\int_{\mathbb{R}^3}P_2\,\mathrm{div}\,W_{\varepsilon}\,\mathrm{d}x=-\int_{\mathbb{R}^3}\nabla P_2\,\cdot W_{\varepsilon}\,\mathrm{d}x.
\end{equation*}
Thus we get by using  the H\"older, the Young inequalities and estimates \eqref{eq-p-2}-\eqref{eq.star} that
\begin{equation}\label{est-p-1}
\begin{split}
\int_{\mathbb{R}^3}P_2\,\mathrm{div}\,W_{\varepsilon}\,\mathrm{d}x
\leq&\|h_\varepsilon\nabla P_2\|_{L^2(\R^3)}\big\|V_\varepsilon\big\|_{L^2(\R^3)}\\
\leq&\left(\| \nabla P_2\|_{L^2(\R^3)}+\|\sqrt{1+|\cdot|}\nabla P_2\|_{L^2(\R^3)}\right)\big\|V_\varepsilon\big\|_{L^2(\R^3)}\\
\leq&C\left(\|U_0\|_{L^\infty(\R^3)}\|\nabla U_0\|_{L^2(\R^3)}+\big\|\sqrt{1+|\cdot|} U_0\cdot\nabla U_0\big\|_{L^2(\R^3)}\right)\big\|V_\varepsilon\big\|_{L^2(\R^3)}\\
\leq &C\|U_0\|^2_{L^\infty(\R^3)}\|\nabla U_0\|^2_{L^2(\R^3)}+C\big\|\sqrt{1+|\cdot|} U_0\cdot\nabla U_0\big\|^2_{L^2(\R^3)}+\frac{1-\alpha}{8\alpha}\big\|V_\varepsilon\big\|^2_{L^2(\R^3)}.
\end{split}
\end{equation}
An computation yields
\begin{equation*}
\int_{\mathbb{R}^3}P_1\, \mathrm{div}\,W_{\varepsilon}\,\mathrm{d}x=2\int_{ \mathbb { R } ^ { 3 } }P_1\,h_\varepsilon\nabla h_\varepsilon\cdot V\,\mathrm{d}x.
\end{equation*}
Moreover, we obtain in terms of the H\"older inequality, \eqref{eq.h-d-h} and \eqref{eq.I-2} that
\begin{equation}\label{est-p-2}
\begin{split}
\int_{\mathbb{R}^3}P_1\, \mathrm{div}\,W_{\varepsilon}\,\mathrm{d}x
\leq&2\|h_\varepsilon\nabla h_\varepsilon\|_{L^\infty(\R^3)}\|P_1\|_{L^2(\R^3)}\|V\|_{L^2(\R^3)}\\
\leq&C\left(\|V\|_{L^4(\R^3)}^2+\|U_0\|_{L^\infty(\R^3)}\|V\|_{L^2(\R^3)}\right)\|V\|_{L^2(\R^3)}.
\end{split}
\end{equation}
Adding \eqref{est-p-1}  to   \eqref{est-p-2}, then
\begin{equation}\label{est-p}
\begin{split}
I_6
\leq& C\left(\|V\|_{L^4(\R^3)}^2+\|U_0\|_{L^\infty(\R^3)}\|V\|_{L^2(\R^3)}\right)\|V\|_{L^2(\R^3)}\\
&+C\|U_0\|^2_{L^\infty(\R^3)}\|\nabla U_0\|^2_{L^2(\R^3)}+\frac{1-\alpha}{8\alpha}\big\|V_\varepsilon\big\|^2_{L^2(\R^3)}.
\end{split}
\end{equation}
For $I_7$, we have by the H\"older inequality and
estimate \eqref{estimate-h} that
\begin{equation}\label{eq-con}
\begin{split}
I_7=\int_{\mathbb{R}^3}V\cdot \nabla h_\varepsilon\,  V_\varepsilon\cdot V\,\mathrm{d}x
\leq&\|V\|_{L^4(\R^3)}^2\|\nabla h_\varepsilon\|_{L^\infty(\R^3)}\|V_\varepsilon\|_{L^2(\R^3)}\\
\leq& C\|V\|_{L^4(\R^3)}^4+\frac{1-\alpha}{8\alpha}\big\|V_\varepsilon\big\|^2_{L^2(\R^3)}.
\end{split}
\end{equation}
For $I_8$,  after a simple calculation, we have by the H\"older inequality, the Young inequality and estimate \eqref{estimate-h}  that
\begin{equation}\label{eq-r-l-1}
\begin{split}
I_8= -\int_{\mathbb{R}^3}U_0\cdot \nabla W_{\varepsilon} \cdot V\,\mathrm{d}x
=&-\int_{\mathbb{R}^3}\left(U_0\cdot \nabla h_\varepsilon\right)  \,\left( V_\varepsilon\cdot V\right)\,\mathrm{d}x\\
\leq &\|U_0\|_{L^\infty(\R^3)}\|\nabla h_\varepsilon\|_{L^\infty(\R^3)}\|V\|_{L^2(\R^3)}\|V_\varepsilon\|_{L^2(\R^3)}\\
\leq& C\|U_0\|^2_{L^\infty(\R^3)}\|V\|^2_{L^2(\R^3)}+\frac{1-\alpha}{8\alpha}\big\|V_\varepsilon\big\|^2_{L^2(\R^3)}.
\end{split}
\end{equation}
At last, employing  the H\"older inequality and the Young inequality, we readily obtain
\begin{equation}\label{eq-r-l-2}
\begin{split}
I_9\leq \int_{\mathbb{R}^3}\big|V\big| (1+|x|) \nabla  U_0 \big|V \big|\,\mathrm{d}x
\leq\left(\|\nabla U_0\|_{L^\infty(\R^3)}+\big\||\cdot|\nabla U_0\big\|_{L^\infty(\R^3)}\right)\|V\|^2_{L^2(\R^3)},
\end{split}
\end{equation}
and
\begin{equation}\label{eq-r-known}
\begin{split}
I_{10}\leq \int_{\mathbb{R}^3}\big|\sqrt{1+|x|} \,U_0 \cdot \nabla  U_0 \big|\,\big|V_\varepsilon \big|\,\mathrm{d}x
\leq& \big\|\sqrt{1+|\cdot|} \,U_0\cdot  \nabla  U_0\big\|_{L^2(\R^3)} \|V_\varepsilon\|_{L^2(\R^3)}\\
\leq&C\big\|\sqrt{1+|\cdot|}\, U_0\cdot  \nabla  U_0\big\|^2_{L^2(\R^3)}+\frac{1-\alpha}{16\alpha} \|V_\varepsilon\|^2_{L^2(\R^3)}.
\end{split}
\end{equation}
Plugging \eqref{eq-r-1}, \eqref{eq-r-2}-\eqref{r-d-2}, \eqref{eq-con}-\eqref{eq-r-known} into \eqref{eq.weak-ii}, we eventually obtain
\begin{align}
\nonumber
&\tfrac12\nu\big\|V_\varepsilon\big\|^2_{\dot{H}^1(\R^3)}+\tfrac12\big\|V_\varepsilon\big\|^2_{\dot{H}^\alpha(\R^3)}
+\tfrac{3(1-\alpha)}{8\alpha}\big\|V_\varepsilon\big\|_{L^2(\R^3)}^2+\tfrac{1}{2\alpha}\int_{\mathbb{R}^3}|V_{\varepsilon}|^2\,   \tfrac{1}{1+\varepsilon|x|^2}\,\mathrm{d}x\\
\leq& \tfrac{1}{4\alpha}\big\|V\big\|^2_{L^2(\R^3)}+C\|V\|_{H^\alpha(\R^3)}^2
+C\left(\|V\|_{L^4(\R^3)}^2+\|U_0\|_{L^\infty(\R^3)}\|V\|_{L^2(\R^3)}\right)\|V\|_{L^2}\nonumber\\
&+C\|U_0\|^2_{L^\infty(\R^3)}\|\nabla U_0\|^2_{L^2(\R^3)}+C\|V\|_{L^4(\R^3)}^4+ C\|U_0\|^2_{L^\infty(\R^3)}\|V\|^2_{L^2(\R^3)}\nonumber\\
&+C\big(\|\nabla U_0\|_{L^\infty(\R^3)}+\big\||\cdot|\nabla U_0\big\|_{L^\infty(\R^3)}\big)\|V\|^2_{L^2(\R^3)}+C\big\|\sqrt{\langle\cdot\rangle} U_0\cdot  \nabla  U_0\big\|^2_{L^2(\R^3)}\nonumber\\
&+ C\nu\|V\|^2_{L^2(\R^3)}+\nu\|\nabla V\|^2_{L^2(\R^3)}.
\nonumber
\end{align}
Taking $\varepsilon\to0+$ and $\nu\to 0+$, we eventually  get by using \eqref{eq-energy-v} and Lebesgue' dominated convergence theorem that
\begin{align*}
& \big\|\sqrt{1+|\cdot|} V \big \|^2_{\dot{H}^\alpha(\R^3)}+\frac{3(1-\alpha)}{ 8\alpha}\big \|\sqrt{1+|\cdot|}V\big \|_{L^2(\R^3)}^2+\frac{1}{2\alpha}\int_{\mathbb{R}^3}(1+|x|)|V|^2  \,\mathrm{d}x \leq C\big(U_0\big).
 \end{align*}
This estimate implies  the desired result in Proposition \ref{prop3.1}.

\subsection{$B^{2\alpha}_{2,\infty}(\R^3)$-estimate }\;
 The second step is to improve the regularity of $V$ in terms of the differences characterization of Besov spaces.
\begin{proposition}\label{prop3.2}
	Let  $\alpha\in(5/6,1),$ and $V\in H_{\sigma}^{\alpha}(\R^{3})$ be the weak solution which was established in Theorem~\ref{thm-1}. Then there exists $C>0$ such that
	\begin{equation}\label{add-eq-003}
	\big\| V\big\|_{B_{2,\infty}^{2\alpha}(\R^3)}<C\left(U_0\right).
	\end{equation}
\end{proposition}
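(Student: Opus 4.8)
The plan is to deduce the inhomogeneous Besov bound \eqref{add-eq-003} from a uniform control of fractional difference quotients. Since $V\in\dot H^\alpha(\R^3)$ is already known from Theorem~\ref{thm-1} and Proposition~\ref{prop3.1}, I would first record the elementary characterization
\[\|V\|_{B^{2\alpha}_{2,\infty}(\R^3)}\lesssim\|V\|_{L^2(\R^3)}+\sup_{0<|h|\le1,\;k=1,2,3}\big\|\vartriangle_k^{h^\alpha}V\big\|_{\dot H^\alpha(\R^3)},\]
which follows from Lemma~\ref{lem-equi}(iii) applied to $\Lambda^\alpha V$ (so that $\|V\|_{\dot B^{2\alpha}_{2,\infty}}=\|\Lambda^\alpha V\|_{\dot B^\alpha_{2,\infty}}$), together with the identity $\|V(\cdot+he_k)-V\|_{\dot H^\alpha}=|h|^\alpha\|\vartriangle_k^{h^\alpha}V\|_{\dot H^\alpha}$ and the trivial bound $\|\vartriangle_k^{h^\alpha}V\|_{\dot H^\alpha}\lesssim|h|^{-\alpha}\|V\|_{\dot H^\alpha}$ for $|h|>1$. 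Thus it suffices to estimate $\|\vartriangle_k^{h^\alpha}V\|_{\dot H^\alpha}$ uniformly for $0<|h|\le1$ and each direction $e_k$.

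To produce such a bound I would again work with the vanishing-viscosity approximations $V_\nu\in H^1$ of the previous subsection (writing $V$ for $V_\nu$), for which \eqref{eq.weak} holds for all admissible test fields. The choice $\varphi=\vartriangle_k^{-h^\alpha}\vartriangle_k^{h^\alpha}V$ is admissible since $V_\nu\in H^1$ and, uniformly in $\nu$, $V_\nu\in H^\alpha_\omega(\R^3)$ by Proposition~\ref{prop3.1}. Discrete summation by parts (using $\int f\,\vartriangle_k^{-h^\alpha}g=\int\vartriangle_k^{h^\alpha}f\,g$) turns the principal part into $\|\vartriangle_k^{h^\alpha}V\|_{\dot H^\alpha}^2$ plus the non-negative viscous term $\nu\|\nabla\vartriangle_k^{h^\alpha}V\|_{L^2}^2$, while the pressure term drops out because $\mathrm{div}\,\vartriangle_k^{h^\alpha}V=0$. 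The zeroth-order terms are harmless as $\|\vartriangle_k^{h^\alpha}V\|_{L^2}\lesssim\|V\|_{\dot H^\alpha}$. For the scaling term I would use the commutation $\vartriangle_k^{h^\alpha}(x\cdot\nabla V)=x\cdot\nabla\vartriangle_k^{h^\alpha}V+|h|^{1-\alpha}\mathrm{sgn}(h)\,V(\cdot+he_k)\!\cdot\!\partial_k$-type remainder; the first piece integrates against $\vartriangle_k^{h^\alpha}V$ to $-\tfrac32\|\vartriangle_k^{h^\alpha}V\|_{L^2}^2$, and the remainder is bounded by duality as $|h|^{1-\alpha}\|\vartriangle_k^{h^\alpha}V\|_{\dot H^\alpha}\|V\|_{\dot H^{1-\alpha}}$, where $\|V\|_{\dot H^{1-\alpha}}\lesssim\|V\|_{H^\alpha}$ by interpolation (valid since $1-\alpha\le\alpha$) and $|h|^{1-\alpha}\le1$; a Young inequality then leaves only an absorbable multiple of $\|\vartriangle_k^{h^\alpha}V\|_{\dot H^\alpha}^2$. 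The $U_0$-terms and the force $F_0$ are lower order and controlled by $C(U_0)$ using the decay of $U_0$ from Lemma~\ref{Lfreeterm}.

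The decisive point is the convection term. After the difference-Leibniz rule and the divergence-free cancellation $\int V(\cdot+he_k)_i\,\partial_i\vartriangle_k^{h^\alpha}V_j\,\vartriangle_k^{h^\alpha}V_j=0$, it reduces to $\int(\vartriangle_k^{h^\alpha}V\cdot\nabla V)\cdot\vartriangle_k^{h^\alpha}V$. Writing $W:=\vartriangle_k^{h^\alpha}V$ and pairing $\nabla V\in\dot H^{\alpha-1}$ against $W\otimes W\in\dot H^{1-\alpha}$, the homogeneous product law with \emph{unequal} indices gives $\|W\otimes W\|_{\dot H^{1-\alpha}}\lesssim\|W\|_{\dot H^\alpha}\|W\|_{\dot H^{s_c}}$ with the scaling-critical exponent $s_c=\tfrac52-2\alpha$, and interpolation between $L^2$ and $\dot H^\alpha$ yields $\|W\|_{\dot H^{s_c}}\lesssim\|W\|_{L^2}^{1-s_c/\alpha}\|W\|_{\dot H^\alpha}^{s_c/\alpha}\lesssim\|V\|_{\dot H^\alpha}^{1-s_c/\alpha}\|W\|_{\dot H^\alpha}^{s_c/\alpha}$. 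Hence this term is controlled by $C\,\|V\|_{\dot H^\alpha}^{\,2-s_c/\alpha}\|W\|_{\dot H^\alpha}^{\,1+s_c/\alpha}$, and $1+s_c/\alpha<2$ holds \emph{precisely} when $s_c<\alpha$, i.e.\ $\alpha>\tfrac56$. I expect this step — extracting a strictly sub-quadratic power of the top-order norm from the nonlinearity — to be the main obstacle, and it is the place where the subcriticality of $\dot H^\alpha$, hence the hypothesis $\alpha>5/6$, is indispensable (in agreement with the Remark following Theorem~\ref{thm1.2}). A Young inequality then absorbs the top-order factor into $\|W\|_{\dot H^\alpha}^2$ at the cost of a constant depending only on $\|V\|_{\dot H^\alpha}\le C(U_0)$. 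Collecting all contributions, absorbing the several $\varepsilon\|\vartriangle_k^{h^\alpha}V\|_{\dot H^\alpha}^2$ terms, and letting $\nu\to0$ (via the uniform bounds of the previous subsection and weak lower semicontinuity) give $\sup_{0<|h|\le1,\,k}\|\vartriangle_k^{h^\alpha}V\|_{\dot H^\alpha}\le C(U_0)$; the displayed characterization then yields \eqref{add-eq-003}.
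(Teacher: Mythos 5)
Your proposal is correct and follows the same strategy as the paper: work with the viscous approximations $V_\nu$, test the weak formulation with (a multiple of) the second fractional difference quotient $\vartriangle_k^{-h^\alpha}\vartriangle_k^{h^\alpha}V$, reduce the convection term by the divergence-free cancellation to $\int (\vartriangle_k^{h^\alpha}V\cdot\nabla) V\cdot \vartriangle_k^{h^\alpha}V$, extract a strictly sub-quadratic power of $\|\vartriangle_k^{h^\alpha}V\|_{\dot H^\alpha}$ under the hypothesis $\alpha>5/6$, and pass to the Besov norm via the difference characterization of Lemma \ref{lem-equi}(iii). The one substantive deviation is that you drop the localization weight $g_\varepsilon^2=(1+\varepsilon|x|^2)^{-1}$ from the test function. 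This buys you a genuine simplification: the pressure term vanishes identically since $\mathrm{div}\,\vartriangle_k^{-h^\alpha}\vartriangle_k^{h^\alpha}V=0$ (in the paper it survives as $J_7$ and must be estimated), and all the commutators $[\Lambda^\alpha,g_\varepsilon]$ (Lemma \ref{lem-g-} and the $J_2,J_3$ estimates) disappear. The price is that you must justify directly what the truncation is there to guarantee: that $\vartriangle_k^{-h^\alpha}\vartriangle_k^{h^\alpha}V_\nu$ is an admissible test field (it is, for fixed $h\neq 0$, by the uniform bound $\||\cdot|^{1/2}V_\nu\|_{L^2}\le C(U_0)$ of Proposition \ref{prop3.1}) and that the integration by parts producing $\int x\cdot\nabla W\cdot W=-\tfrac32\|W\|_{L^2}^2$ for $W=\vartriangle_k^{h^\alpha}V_\nu$ is legitimate; the latter needs a cutoff argument using $\||\cdot|^{1/2}W\|_{L^2}<\infty$ and, for fixed $\nu>0$, $\nabla W\in L^2_\omega$, so it is repairable but should be said. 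Your treatment of the convection term via the product law $\dot H^{\alpha}\times\dot H^{5/2-2\alpha}\to\dot H^{1-\alpha}$ followed by interpolation is an equivalent but tidier packaging of the paper's Bony-paraproduct estimate of $J_{8_2}$, and it isolates cleanly where $\alpha>5/6$ enters (namely $s_c=\tfrac52-2\alpha<\alpha$). One small imprecision: the $U_0$-terms are not controlled by $C(U_0)$ alone — the piece $\int \vartriangle_k^{h^\alpha}U_0\cdot\nabla V\cdot\vartriangle_k^{h^\alpha}V$ still carries one factor of the top-order norm $\|\vartriangle_k^{h^\alpha}V\|_{\dot H^\alpha}$ through the $\dot H^{-\alpha}$--$\dot H^{\alpha}$ duality (as in the paper's $J_{9_2}$) and must be absorbed by Young's inequality like the others, but this uses no new idea.
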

\begin{proof}  We are using vanishing viscosity again to get the proposition, which was used in the proof of Proposition \ref{prop3.1}. For the sake of simplicity, we denote $\vartriangle^{h^{\alpha}}_k$ by $\vartriangle^{h}_k$. According to both estimates \eqref{add-eq-002} and \eqref{eq-energy-v}, it is easy to check that $\vartriangle_k^{-h}\big(g^2_\varepsilon\vartriangle^{h}_kV\big)\in  H^1_{\omega}(\R^3)$ with $g_{\varepsilon}(x)=\frac{1}{\sqrt{1+\varepsilon|x|^{2}}}$. Moreover, we immediately have by
	choosing $\varphi=-\vartriangle_k^{-h}\big(g^2_\varepsilon\vartriangle^{h}_kV\big)$ in \eqref{eq.weak-i}  that
\begin{align}\label{eq.weak3.2}
\begin{split}
&-\nu\int_{\mathbb{R}^3}\nabla V:\nabla\vartriangle_k^{-h}\big(g^2_\varepsilon\vartriangle^{h}_kV\big)\,\mathrm{d}x-\int_{\mathbb{R}^3}\Lambda^\alpha V:\Lambda^\alpha\vartriangle_k^{-h}\big(g^2_\varepsilon\vartriangle^{h}_kV\big)\,\mathrm{d}x\\
&-\frac{1}{2\alpha}\int_{\mathbb{R}^3}x\cdot \nabla V\cdot\vartriangle_k^{-h}\big(g^2_\varepsilon\vartriangle^{h}_kV\big)\,\mathrm{d}x+\frac{2\alpha-1}{2\alpha}\int_{\mathbb{R}^3} V\cdot\vartriangle_k^{-h}\big(g^2_\varepsilon\vartriangle^{h}_kV\big)\,\mathrm{d}x\\
=&-\int_{\mathbb{R}^3}P\, \mathrm{div}\,\vartriangle_k^{-h}\big(g^2_\varepsilon\vartriangle^{h}_kV\big)\,\mathrm{d}x-\int_{\mathbb{R}^3}V\cdot \nabla\vartriangle_k^{-h}\big(g^2_\varepsilon\vartriangle^{h}_kV\big)\cdot V\,\mathrm{d}x\\
&-\int_{\mathbb{R}^3}U_0\cdot \nabla  V\cdot\vartriangle_k^{-h}\big(g^2_\varepsilon\vartriangle^{h}_kV\big)\,\mathrm{d}x+\int_{\mathbb{R}^3}(V+U_0)\cdot \nabla  U_0\cdot \vartriangle_k^{-h}\big(g^2_\varepsilon\vartriangle^{h}_kV\big)\,\mathrm{d}x.
\end{split}
\end{align}
The first term in the left side of the above equality can be reduced to
\begin{align*}
&-\nu\int_{\mathbb{R}^3}\nabla V:\nabla \vartriangle_k^{-h}\big(g^2_\varepsilon \vartriangle^h_k V\big)\,\mathrm{d}x\\
=&-\nu\int_{\mathbb{R}^3}\nabla V: \vartriangle_k^{-h}\big(g^2_\varepsilon \vartriangle^h_k \nabla V\big)\,\mathrm{d}x-\nu\int_{\mathbb{R}^3}\nabla V: \vartriangle_k^{-h}\big(\nabla g^2_\varepsilon \vartriangle^h_k V\big)\,\mathrm{d}x\\
=&\nu\big\|g_\varepsilon \vartriangle^h_k \nabla V\big\|^2_{L^2(\mathbb{R}^3)}+\nu\int_{\mathbb{R}^3}\vartriangle_k^{h}\nabla V:\big(\nabla g^2_\varepsilon \vartriangle^h_k V\big)\,\mathrm{d}x.
\end{align*}
For the term including the fractional operator, we easily find that
\begin{align*}
&-\int_{\R^3}\Lambda^\alpha V\cdot\Lambda^\alpha\,\vartriangle_k^{-h}\Big(g^2_\varepsilon\vartriangle^h_kV\Big)\,\mathrm{d}x\\
=&\int_{\R^3}g^2_\varepsilon \Lambda^\alpha \vartriangle_k^{h}V\cdot\, \Lambda^\alpha  \vartriangle^h_kV  \,\mathrm{d}x-\int_{\R^3}g_\varepsilon \Lambda^\alpha \vartriangle_k^{h}V\cdot\, \big([g_\varepsilon,\Lambda^\alpha]  \vartriangle^h_kV  \big)\,\mathrm{d}x\\&-\int_{\R^3} \Lambda^\alpha \vartriangle_k^{h}V\cdot\, \big([g_\varepsilon,\Lambda^\alpha] g_\varepsilon \vartriangle^h_kV \big)\,\mathrm{d}x.
\end{align*}
Thus we have
\begin{align*}
-\int_{\R^3}\Lambda^\alpha V\cdot\Lambda^\alpha\,\vartriangle_k^{-h}\Big(g_\varepsilon^2 \vartriangle^h_kV\Big)\,\mathrm{d}x
=&\big\| g_\varepsilon \Lambda^\alpha \vartriangle^h_kV  \big\|_{L^2(\R^3)}^2-\int_{\R^3}g_\varepsilon \Lambda^\alpha \vartriangle_k^{h}V \big([g_\varepsilon,\Lambda^\alpha]  \vartriangle^h_kV  \big)\,\mathrm{d}x\\&-\int_{\R^3} \Lambda^\alpha \vartriangle_k^{h}V\cdot \big([g_\varepsilon,\Lambda^\alpha] g_\varepsilon \vartriangle^h_kV \big)\,\mathrm{d}x.
\end{align*}
 Now we consider the remaining part in left side of \eqref{eq.weak3.2}. Integrating by parts, we get
\begin{align*}
&\frac{1}{2\alpha}\int_{\mathbb{R}^3}x\cdot \nabla V\cdot \vartriangle_k^{-h}\Big(g_\varepsilon^2\vartriangle^h_kV\Big)\,\mathrm{d}x+\frac{2\alpha-1}{2\alpha}\int_{\mathbb{R}^3} V\cdot \vartriangle_k^{-h}\Big(g_\varepsilon^2\vartriangle^h_kV\Big)\,\mathrm{d}x\\
=&-	\frac{1}{2\alpha}\int_{\mathbb{R}^3}g_{\varepsilon}^2\times(x+h\mathbf{e}_k)\cdot \nabla \vartriangle_k^{h} V\cdot \vartriangle^h_kV\,\mathrm{d}x-	 \frac{1}{2\alpha}\int_{\mathbb{R}^3}g_\varepsilon^2\big(\vartriangle_k^{h}x\big)\cdot \nabla V\cdot \vartriangle^h_kV\,\mathrm{d}x\\
&-\frac{2\alpha-1}{2\alpha}\int_{\mathbb{R}^3}g^2_\varepsilon \vartriangle^h_kV\cdot \vartriangle^h_kV\,\mathrm{d}x\\
=&\frac{5-4\alpha}{4\alpha}\big\|g_\varepsilon \vartriangle^h_k  V\big\|^2_{L^2(\mathbb{R}^3)}+\frac{1}{4\alpha}\int_{\R^3}x\cdot\nabla  g_\varepsilon^2\,\vartriangle_k^{h} V\cdot \vartriangle^h_kV\,\mathrm{d}x
-	\frac{h}{2\alpha}\int_{\mathbb{R}^3}g_\varepsilon^2 \partial_{x_k}\vartriangle_k^{h} V\cdot \vartriangle^h_kV\,\mathrm{d}x\\&-	 \frac{1}{2\alpha}\int_{\mathbb{R}^3}g_\varepsilon^2 \partial_{x_k}V\cdot \vartriangle^h_kV\,\mathrm{d}x.\end{align*}
This equality implies that
\begin{align*}
&\frac{1}{2\alpha}\int_{\mathbb{R}^3}x\cdot \nabla V\cdot \vartriangle_k^{-h}\Big(g_\varepsilon^2\vartriangle^h_kV\Big)\,\mathrm{d}x+\frac{2\alpha-1}{2\alpha}\int_{\mathbb{R}^3} V\cdot \vartriangle_k^{-h}\Big(g_\varepsilon^2\vartriangle^h_kV\Big)\,\mathrm{d}x\\
=&\frac{5-4\alpha}{4\alpha}\big\|g_\varepsilon \vartriangle^h_k  V\big\|^2_{L^2(\mathbb{R}^3)} -\frac{1}{2\alpha}\int_{\R^3} \frac{\varepsilon |x|^2}{(1+\varepsilon|x|^2)^2}\,\vartriangle_k^{h} V\cdot \vartriangle^h_kV\,\mathrm{d}x\\
&+\frac{h}{4\alpha}\int_{\mathbb{R}^3}\partial_{x_k}g_\varepsilon^2 \vartriangle_k^{h} V\cdot \vartriangle^h_kV\,\mathrm{d}x-\frac{1}{2\alpha}\int_{\mathbb{R}^3}g_\varepsilon^2 \partial_{x_k}V\cdot \vartriangle^h_kV\,\mathrm{d}x.
\end{align*}
Inserting the above estimates into \eqref{eq.weak3.2} leds to that
\begin{align}\label{eq-w-h}
\begin{split}&\nu\big\|g_\varepsilon \vartriangle^h_k \nabla V\big\|^2_{L^2(\mathbb{R}^3)}+\big\| g_\varepsilon \Lambda^\alpha \vartriangle^h_kV  \big\|_{L^2(\R^3)}^2+\frac{5-4\alpha}{4\alpha}\big\|g_\varepsilon \vartriangle^h_k  V\big\|^2_{L^2(\mathbb{R}^3)}\\
= &-\nu\int_{\mathbb{R}^3}\vartriangle_k^{h}\nabla V:\big(\nabla g^2_\varepsilon \vartriangle^h_k V\big)\,\mathrm{d}x+\int_{\R^3}g_\varepsilon \Lambda^\alpha \vartriangle_k^{h}V\cdot\, \big([g_\varepsilon,\Lambda^\alpha]  \vartriangle^h_kV  \big)\,\mathrm{d}x\\
&+\int_{\R^3} \Lambda^\alpha \vartriangle_k^{h}V\cdot\, \big([g_\varepsilon,\Lambda^\alpha] g_\varepsilon \vartriangle^h_kV \big)\,\mathrm{d}x+\frac{1}{2\alpha}\int_{\R^3} \frac{\varepsilon |x|^2}{1+\varepsilon|x|^2}\,\vartriangle_k^{h} V\cdot \vartriangle^h_kV\,\mathrm{d}x\\
&-\frac{h}{4\alpha}\int_{\mathbb{R}^3}\partial_{x_k}g_\varepsilon^2 \vartriangle_k^{h} V\cdot \vartriangle^h_kV\,\mathrm{d}x+\frac{1}{2\alpha}\int_{\mathbb{R}^3}g_\varepsilon^2 \partial_{x_k}V\cdot \vartriangle^h_kV\,\mathrm{d}x\\
&-\int_{\R^3}P\,{\rm div}\vartriangle_k^{-h}\big(g^2_\varepsilon \vartriangle^h_kV\big)\,\mathrm{d}x+\int_{\R^3}
V\cdot\nabla V\,\vartriangle_k^{-h}\big(g^2_\varepsilon \vartriangle^h_kV\big)\,\mathrm{d}x\\
&+\int_{\R^3}
U_0\cdot\nabla V\,\vartriangle_k^{-h}\big(g^2_\varepsilon \vartriangle^h_kV\big)\,\mathrm{d}x+\int_{\R^3}
\big(V+U_0\big)\cdot\nabla U_0\,\vartriangle_k^{-h}\big(g^2_\varepsilon \vartriangle^h_kV\big)\,\mathrm{d}x:=\sum_{i=1}^{10}J_i.
\end{split}
\end{align}
Our task is now to estimate for all terms in the right side of equality \eqref{eq-w-h}. Let's begin with  $J_1$. Note that
\begin{equation*}
-\nu\int_{\mathbb{R}^3}\vartriangle_k^{h}\nabla V:\big(\nabla g^2_\varepsilon \vartriangle^h_k V\big)\,\mathrm{d}x
=
\nu\int_{\mathbb{R}^3 }\nabla \vartriangle_k^{h}V:\left( \frac{2\varepsilon x}{(1+\varepsilon|x|^2)^2}\vartriangle^h_k V\right)\,\mathrm{d}x,
\end{equation*}
we have that for each $\varepsilon\in(0,1)$,
\begin{align*}
J_1
\leq C\nu \big\|g_\varepsilon \vartriangle_k^{h}\nabla V\big\|_{L^2(\R^3)}\|\vartriangle^h_kV\|_{L^2(\R^3)}
\leq C\nu \|V\|^2_{\dot{H}^\alpha(\R^3)}+\frac14\nu\big\|g_\varepsilon \vartriangle_k^{h}\nabla V\big\|^2_{L^2(\R^3)}.
\end{align*}
In terms of the H\"older inequality, Cauchy-Schwarz inequality and Lemma \ref{lem-Comm}, we have
\begin{align*}
J_2
\leq &\big\|g_\varepsilon \Lambda^\alpha \vartriangle_k^{h}V\big\|_{L^2(\R^3)}\big\|[g_\varepsilon,\Lambda^\alpha]  \vartriangle^h_kV\big\|_{L^2(\R^3)}\\
\leq &\big\|g_\varepsilon \Lambda^\alpha \vartriangle_k^{h}V\big\|_{L^2(\R^3)}\big\|\vartriangle^h_kV\big\|_{L^2(\R^3)}
\leq C\|V\|^2_{\dot{H}^\alpha(\R^3)}+\frac{1}{16}\big\|g_\varepsilon \Lambda^\alpha \vartriangle_k^{h}V\big\|^2_{L^2(\R^3)}.
\end{align*}
As for $J_3$, we rewrite
\begin{align*}
J_3=
\int_{\R^3} g_\varepsilon\Lambda^\alpha \vartriangle_k^{h}V\cdot\, \big([\Lambda^\alpha, g_\varepsilon] \vartriangle^h_kV \big)\,\mathrm{d}x +\int_{\R^3} \Lambda^\alpha \vartriangle_k^{h}V\cdot\, \big([\Lambda^\alpha, g^2_\varepsilon] \vartriangle^h_kV \big)\,\mathrm{d}x.
\end{align*}
In the similar fashion as used in bounding $J_2$, we see that
\begin{align}\label{add-eq-004}
\begin{split}
\int_{\R^3} g_\varepsilon\Lambda^\alpha \vartriangle_k^{h}V\cdot \big([\Lambda^\alpha, g_\varepsilon] \vartriangle^h_kV \big)\,\mathrm{d}x
 \leq&\big\|g_\varepsilon\Lambda^\alpha \vartriangle_k^{h}V\big\|_{L^2(\R^3)}\big\| [g_\varepsilon,\Lambda^\alpha] (  \vartriangle^h_kV )\big\|_{L^2(\R^3)}\\
\leq&\big\|g_\varepsilon\Lambda^\alpha \vartriangle_k^{h}V\big\|_{L^2(\R^3)}\big\| \vartriangle^h_kV \big\|_{L^2(\R^3)}\\
\leq&C \| V  \|^2_{\dot{H}^\alpha(\R^3)}+\frac{1}{16}\big\|g_\varepsilon\Lambda^\alpha \vartriangle_k^{h}V\big\|^2_{L^2(\R^3)}.
\end{split}
\end{align}
For another part of $J_3$, we have to resort to the following Lemma:
\begin{lemma}\label{lem-g-}
Let  $\alpha\in(5/6,1)$. Then we have
\begin{equation}\label{commut-02}
	\big\|g_\varepsilon^{-\frac12}\big[\Lambda^\alpha,\,g^2_\varepsilon\big]f\big\|_{L^2(\R^3)}\leq C\max\big\{ \varepsilon^{\frac{1}{2}},\varepsilon^{\frac{1}{4}}\big\}\|f\|_{L^2(\R^3)}.
	\end{equation}
\end{lemma}
\begin{proof}[Proof of Lemma \ref{lem-g-} ]
We divide the  commutator into two parts  as follows
	\begin{align*}
\left[\Lambda^\alpha,g^2_\varepsilon\right]f=&
	\int_{\R^3}\frac{g^2_\varepsilon(y)-g^2_\varepsilon(x)}{|x-y|^{3+\alpha}}f(y)\,\mathrm{d}y\\
=&
	 \varepsilon\int_{\mathbb B_1(x)}\frac{ (x-y)\cdot(x+y)}{|x-y|^{3+\alpha}}\frac{1}{(1+\varepsilon|x|^2)(1+\varepsilon|y|^2)}f(y)\,\mathrm{d}y\\
	& +\varepsilon\int_{\mathbb B_1^c(x)}\frac{ (x-y)\cdot(x+y)}{|x-y|^{3+\alpha}}\frac{1}{(1+\varepsilon|x|^2)(1+\varepsilon|y|^2)}f(y)\,
\mathrm{d}y\triangleq{I+II}.
  \end{align*}
By the triangle inequality, we easily find that
\begin{align*}	
| I|&\leq \sqrt{\varepsilon}\int_{\mathbb B_1(x)}\frac{|x-y|}{|x-y|^{3+\alpha}}\frac{\sqrt{\varepsilon}(|x|+|y|)}{(1+\varepsilon|x|^{2})
(1+\varepsilon|y|^{2})}
|f(y)|\,\mathrm{d}y\\&
\leq \sqrt{\varepsilon}g_{\varepsilon}^{\frac{1}{2}}\int_{\mathbb B_1(x)}\frac{|x-y|}{|x-y|^{3+\alpha}}|f(y)|\,\mathrm{d}y,
\end{align*}
and
\begin{align*}
|II|&\leq \varepsilon\int_{\mathbb B_1^c(x)}\frac{|x-y|^{\frac{1}{2}}}{|x-y|^{3+\alpha}}\frac{|x-y|^{\frac{1}{2}}|x+y|}{(1+\varepsilon|x|^2)(1+\varepsilon|y|^2)}|f(y)|\,\mathrm{d}y\\&
\leq C \varepsilon^{\frac{1}{4}}g_{\varepsilon}^{\frac{1}{2}}\int_{\mathbb B_1^c(x)}\frac{|x-y|^{\frac{1}{2}}}{|x-y|^{3+\alpha}}
\frac{ (\sqrt{\varepsilon}|x|)^{\frac{3}{2}}+(\sqrt{\varepsilon}|y|)^{\frac{3}{2}}) }{(1+\varepsilon|x|^2)^{\frac{3}{4}}(1+\varepsilon|y|^2)}f(y)\,\mathrm{d}y\\&
\leq C \varepsilon^{\frac{1}{4}}g_{\varepsilon}^{\frac{1}{2}}\int_{\mathbb B_1^c(x)}\frac{|x-y|^{\frac{1}{2}}}{|x-y|^{3+\alpha}}f(y)\,\mathrm{d}y,
\end{align*}
in the second line, we have used the following estimate
\begin{align*}
\frac{\varepsilon|x-y|^{\frac{1}{2}}|x+y|}{(1+\varepsilon|x|^2)(1+\varepsilon|y|^2)}\leq& \frac{\varepsilon (|x|+|y|)^{\frac{3}{2}}}{(1+\varepsilon|x|^2)(1+\varepsilon|y|^2)}\\
\leq& \frac{2\varepsilon (|x|^{\frac{3}{2}}+|y|^{\frac{3}{2}}) }{(1+\varepsilon|x|^2)(1+\varepsilon|y|^2)}\leq 2\varepsilon^{\frac{1}{4}}g_{\varepsilon}\frac{ (\sqrt{\varepsilon}|x|)^{\frac{3}{2}}+(\sqrt{\varepsilon}|y|)^{\frac{3}{2}}) }{(1+\varepsilon|x|^2)^{\frac{3}{4}}(1+\varepsilon|y|^2)}.
\end{align*}
Combining both estimates concerning $I$ and $II$ yields
$$
\big|\left[\Lambda^\alpha,g^2_\varepsilon\right]f\big|\leq C\max\big\{ \varepsilon^{\frac{1}{2}},\varepsilon^{\frac{1}{4}}\big\}g^{\frac12}_\varepsilon(x)\int_{\R^3}\frac{\min\{ |x-y|^{\frac12}, \, |x-y| \}}{|x-y|^{3+\alpha}}\big|f(y)\big|\,\mathrm{d}y.
$$
Furthermore, by the Young inequality,  we readily have
	\begin{equation*}
	\big\|g_\varepsilon^{-\frac12}\big[\Lambda^\alpha,\,g^2_\varepsilon\big]f\big\|_{L^2(\R^3)}\leq C\max\big\{ \varepsilon^{\frac{1}{2}},\varepsilon^{\frac{1}{4}}\big\}\|f\|_{L^2(\R^3)}.
	\end{equation*}
So we complete the proof of Lemma \ref{lem-g-}.
\end{proof}
\vskip0.2cm

With  Lemma \ref{lem-g-} in hand, we get by the H\"older inequality that
\begin{align*}
\int_{\R^3} \Lambda^\alpha \vartriangle_k^{h}V\cdot \big([\Lambda^\alpha, g^2_\varepsilon] \vartriangle^h_kV \big)\,\mathrm{d}x
\leq &\big\|g^{\frac12}_\varepsilon\Lambda^\alpha \vartriangle_k^{h}V\big\|_{L^2(\R^3)}\left\|g_\varepsilon^{-\frac12}\big[\Lambda^\alpha,\,g^2_\varepsilon\big]\vartriangle^h_kV \right\|_{L^2(\R^3)}\\
\leq&C\max\big\{ \varepsilon^{\frac{1}{2}},\varepsilon^{\frac{1}{4}}\big\}\big\|g^{\frac12}_\varepsilon\Lambda^\alpha \vartriangle_k^{h}V\big\|_{L^2(\R^3)}\big\|\vartriangle^h_kV \big\|_{L^2(\R^3)}.
\end{align*}
This together with the fact that for each
$0<\varepsilon<\min\{1,\,h^{4\alpha}\},$
\begin{equation*}
\max\big\{ \varepsilon^{\frac{1}{2}},\varepsilon^{\frac{1}{4}}\big\}\big\|g^{\frac12}_\varepsilon\Lambda^\alpha \vartriangle_k^{h}V\big\|_{L^2(\R^3)}\leq 2\|\Lambda^\alpha V\big\|_{L^2(\R^3)}
\end{equation*}
enables us to conclude that
\[\int_{\R^3} \Lambda^\alpha \vartriangle_k^{h}V\cdot\, \big([\Lambda^\alpha, g^2_\varepsilon] \vartriangle^h_kV \big)\,\mathrm{d}x\leq C\|V\|^2_{\dot{H}^\alpha(\R^3)}.\]
Combining this inequality  with \eqref{add-eq-004} yields  that
$$J_3\leq C \| V  \|^2_{\dot{H}^\alpha(\R^3)}+\frac{1}{16}\big\|g_\varepsilon\Lambda^\alpha \vartriangle_k^{h}V\big\|^2_{L^2(\R^3)}.$$
Next,  by using the H\"older inequality,  one has
\begin{align*}
J_4\leq\frac{1}{2\alpha}\int_{\R^3} \left|\vartriangle_k^{h} V\cdot \vartriangle^h_kV\right|\,\mathrm{d}x
\leq \frac{1}{2\alpha}\big\|\vartriangle^h_kV\big\|_{L^2(\R^3)}^2\leq C\|V\|^2_{\dot{H}^\alpha(\R^3)},
\end{align*}
and
\begin{align*}
J_5\leq &\frac{1}{2\alpha}\|g_\varepsilon\|_{L^\infty(\R^3)}\|\nabla g_\varepsilon\|_{L^\infty(\R^3)}\big\|\vartriangle^h_kV\big\|_{L^2(\R^3)}^2
\leq C\|V\|^2_{\dot{H}^\alpha(\R^3)}.
\end{align*}
For $J_6$, we rewrite it as
\begin{align*}
J_6
=&-\frac{1}{2\alpha}\int_{\mathbb{R}^3}\partial_{x_k}(g_\varepsilon V)\cdot \big(g_\varepsilon \vartriangle^h_kV\big)\,\mathrm{d}x+\frac{1}{2\alpha}\int_{\mathbb{R}^3}(\partial_{x_k}g_\varepsilon V)\cdot \big(g_\varepsilon \vartriangle^h_kV\big)\,\mathrm{d}x:=J_{6_1}+J_{6_2}.
\end{align*}
By the H\"older inequality, we obtain
\begin{align}\label{add-eq-005}
\begin{split}
J_{6_2}
\leq\frac{1}{2\alpha}\|g_\varepsilon\|_{L^\infty(\R^3)}\|\nabla g_\varepsilon\|_{L^\infty(\R^3)}\|V\|_{L^2(\R^3)}\big\|\vartriangle^h_kV\big\|_{L^2(\R^3)}
\leq C\|V\|_{L^2(\R^3)}\|V\|_{\dot{H}^\alpha(\R^3)}.
\end{split}
\end{align}
On the other hand, we have by the interpolation inequality that
\begin{align*}
J_{6_1}\leq\big\|\partial_{x_k}(g_\varepsilon V)\big\|_{\dot{H}^{-\alpha}}\big\|g_\varepsilon \vartriangle^h_kV\big\|_{\dot{H}^\alpha(\R^3)}
\leq \left(\|V\|_{L^2(\R^3)}+\|g_\varepsilon V\|_{\dot{H}^\alpha(\R^3)}\right)\|g_\varepsilon  \vartriangle^h_kV\|_{\dot{H}^\alpha(\R^3)}.
\end{align*}
Since
\begin{align*}
\|g_\varepsilon V\|_{\dot{H}^\alpha(\R^3)}\leq\|g_\varepsilon \Lambda^\alpha V\|_{L^2(\R^3)}+\big\|[\Lambda^\alpha,g_\varepsilon]V\big\|_{L^2(\R^3)}
\leq\|g_\varepsilon \Lambda^\alpha V\|_{L^2(\R^3)}+C\|V\|_{L^2(\R^3)},
\end{align*}
and
\begin{align*}
\|g_\varepsilon \vartriangle^h_kV\|_{\dot{H}^\alpha(\R^3)}\leq\big\|g_\varepsilon \Lambda^\alpha \vartriangle^h_k V\big\|_{L^2(\R^3)}+C\|\vartriangle^h_kV\|_{L^2(\R^3)},
\end{align*}
we have by the Young inequality that
\begin{align}\label{add-eq-006}
\begin{split}
J_{6_1}
\leq&C\|V\|^2_{H^\alpha(\R^3)}+C\|V\|_{H^\alpha(\R^3)}\big\|g_\varepsilon \Lambda^\alpha \vartriangle^h_k V\big\|_{L^2(\R^3)}\\
\leq&C\|V\|^2_{H^\alpha(\R^3)}+\frac{1}{16}\big\|g_\varepsilon \Lambda^\alpha \vartriangle^h_k V\big\|^2_{L^2(\R^3)}.
\end{split}
\end{align}
Collecting estimates \eqref{add-eq-005}-\eqref{add-eq-006}, we  obtain
$$J_6\leq C\|V\|^2_{H^\alpha(\R^3)}+\frac{1}{16}\big\|g_\varepsilon \Lambda^\alpha \vartriangle^h_k V\big\|^2_{L^2(\R^3)}. $$

Next,  we start to tackle with the term involving the pressure. We rewrite $J_7$ as follows
\begin{align*}
J_7=\int_{\R^3}\vartriangle_k^{h} P_1\,\big(\nabla g^2_\varepsilon\cdot \vartriangle^h_kV\big)\,\mathrm{d}x
+\int_{\R^3}\vartriangle_k^{h} P_2\,\big(\nabla g^2_\varepsilon\cdot \vartriangle^h_kV\big)\,\mathrm{d}x:=J_{7_1}+J_{7_2},
\end{align*}
where $P_1$ satisfies
$$-\Delta P_1=\text{div}\,\text{div}\,\left(V\otimes V+U_0\otimes V\right)$$
and $P_2$ satisfies
$$-\Delta P_2=\text{div}\,\left(V\cdot\nabla U_0+U_0\cdot\nabla U_0\right).$$
Since $\nabla g^2_\varepsilon=- \frac{2\varepsilon x }{(1+\varepsilon|x|^2)^2},$
we obtain by using the H\"older inequality and the Young inequality that
\begin{align}\label{add-eq-007}
\begin{split}
J_{7_2}\leq&\big\|\vartriangle_k^{h} P_2\big\|_{L^2(\R^3)}\big\|g_\varepsilon \vartriangle^h_kV\big\|_{L^2(\R^3)}\\
\leq &C\|U_0\|^2_{L^\infty(\R^3)}\|\nabla U_0\|^2_{L^2(\R^{3})}+C\|V\|_{L^4(\R^3)}^2\|\nabla U_0\|_{L^4(\R^3)}^2+\frac{1-\alpha}{16\alpha}\big\|g_\varepsilon \vartriangle^h_kV\big\|^2_{L^2(\R^3)}.
\end{split}
\end{align}
Note that
\begin{align*}
J_{7_1}=
\int_{\R^3}  \vartriangle_k^{h} P_1\,\Big(\nabla g^2_\varepsilon \vartriangle^h_kV\Big)\,\mathrm{d}x
=-
\int_{\R^3}   P_1\,\vartriangle_k^{-h}\Big(\nabla g^2_\varepsilon \vartriangle^h_kV\Big)\,\mathrm{d}x,
\end{align*}
we have by the H\"older inequality that
\begin{align*}
J_{7_1}\leq&
\|P_1\|_{L^2(\R^3)}\Big\|\vartriangle_k^{-h}\Big(\nabla g^2_\varepsilon\vartriangle^h_kV\Big)\Big\|_{L^2(\R^3)}\\
\leq&C\left(\|V\|_{L^4(\R^3)}^2+\|U_0\|_{L^4(\R^3)}^2\right)\Big\|\Lambda^\alpha\Big(\nabla g^2_\varepsilon \vartriangle^h_kV\Big)\Big\|_{L^2(\R^3)}\\
\leq&C\left(\|V\|_{L^4(\R^3)}^2+\|U_0\|_{L^4(\R^3)}^2\right)\Big(\Big\|\nabla g^2_\varepsilon \Lambda^\alpha \vartriangle^h_kV\Big\|_{L^2(\R^3)}+\Big\|\big[ \Lambda^\alpha, \nabla g^2_\varepsilon\big] \vartriangle^h_kV\Big\|_{L^2(\R^3)}\Big)\\
\leq&C\left(\|V\|_{L^4(\R^3)}^2+\|U_0\|_{L^4(\R^3)}^2\right)\left(\big\|g_\varepsilon\Lambda^\alpha \vartriangle^h_kV\big\|_{L^2(\R^3)}+\big\|\Lambda^\alpha V\big\|_{L^2(\R^3)}\right).
\end{align*}
Here we have used the following estimate deduced by Lemma \ref{lem-Comm} that
\begin{equation}\label{c-2}
\big\|\big[ \Lambda^\alpha, \nabla g^2_\varepsilon\big] \vartriangle^h_kV\big\|_{L^2(\R^3)}\leq \|\nabla g^2_{\varepsilon}\|_{W^{1,\infty}(\R^3)}\big\|\vartriangle^h_kV\big\|_{L^2(\R^3)}.
\end{equation}
Moreover, by the Young inequality, we immediately have
\begin{align}\label{add-eq-008}
\begin{split}
J_{7_1}\leq&C\big(\|V\|_{L^4(\R^3)}^2+\|U_0\|_{L^4(\R^3)}^2\big)\big\|\Lambda^\alpha V\big\|_{L^2(\R^3)}
+C\big(\|V\|_{L^4(\R^3)}^2+\|U_0\|_{L^4(\R^3)}^2\big)^2\\
&+\frac{1}{16}\big\|g_\varepsilon\Lambda^\alpha \vartriangle^h_kV\big\|^2_{L^2(\R^3)}.
\end{split}
\end{align}
This inequality together with \eqref{add-eq-007} yields that
 \begin{align*}
 J_7\le &C\big(\|V\|_{L^4(\R^3)}^2+\|U_0\|_{L^4(\R^3)}^2\big)\big\|\Lambda^\alpha V\big\|_{L^2(\R^3)}
+C\big(\|V\|_{L^4(\R^3)}^2+\|U_0\|_{L^4(\R^3)}^2\big)^2\\
 &+C\|U_0\|^2_{L^\infty(\R^3)}\|\nabla U_0\|^2_{L^2(\R^{3})}+C\|V\|_{L^4(\R^3)}^2\|\nabla U_0\|_{L^4(\R^3)}^2\\
 &+\frac{1-\alpha}{16\alpha}\big\|g_\varepsilon \vartriangle^h_kV\big\|^2_{L^2(\R^3)}+\frac{1}{16}\big\|g_\varepsilon\Lambda^\alpha \vartriangle^h_kV\big\|^2_{L^2(\R^3)}.\end{align*}
Next, we rewrite $I_8$ as follows
\begin{align*}
J_8=-\int_{\R^3}
V\cdot\nabla \vartriangle_k^{h}V\,\big(g^2_\varepsilon \vartriangle^h_kV\big)\,\mathrm{d}x-\int_{\R^3}
\vartriangle_k^{h}V(x+he_k)\cdot\nabla V\,\big(g^2_\varepsilon \vartriangle^h_kV\big)\,\mathrm{d}x:=J_{8_1}+J_{8_2}.
\end{align*}
Thanks to the incompressible condition $\text{div}\, V=0,$ we have
\begin{align*}
J_{8_1}=&-\int_{\R^3}
V\cdot\nabla \big(g_\varepsilon \vartriangle^h_kV\big)\,\big(g_\varepsilon \vartriangle^h_kV\big)\,\mathrm{d}x+\int_{\R^3}
V\cdot\nabla g_\varepsilon\, \vartriangle^h_kV\,\big(g_\varepsilon \vartriangle^h_kV\big)\,\mathrm{d}x\\
=&-\int_{\R^3}
V\cdot\frac{\varepsilon x}{(1+\varepsilon|x|^2)^{\frac32}} \, \vartriangle^h_kV\,\big(g_\varepsilon \vartriangle^h_kV\big)\,\mathrm{d}x.
\end{align*}
By the H\"older inequality and the interpolation inequality, we get
\begin{align*}
J_{8_1}\leq &\sqrt{\varepsilon}\|V\|_{L^3(\R^3)}\big\|g_\varepsilon \vartriangle^h_kV\big\|^2_{L^3(\R^3)}\\
\leq&\sqrt{\varepsilon}\|V\|_{L^3(\R^3)}\big\|g_\varepsilon \vartriangle^h_kV\big\|^{2-\frac{1}{\alpha}}_{L^2(\R^3)}\big\|\Lambda^\alpha(g_\varepsilon \vartriangle^h_kV)\big\|^{\frac1\alpha}_{L^2(\R^3)}\\
\leq&\sqrt{\varepsilon}\|V\|_{L^3(\R^3)}\big\|g_\varepsilon \vartriangle^h_kV\big\|^{2-\frac{1}{\alpha}}_{L^2(\R^3)}\Big(\big\|g_\varepsilon\Lambda^\alpha
\vartriangle^h_kV\big\|^{\frac1\alpha}_{L^2(\R^3)}+\big\|[\Lambda^\alpha,g_\varepsilon] \vartriangle^h_kV\big\|^{\frac1\alpha}_{L^2(\R^3)}\Big).
\end{align*}
Moreover, we have by \eqref{c-2} that
\begin{align}\label{add-eq-009}
\begin{split}
J_{8_1}\leq& \|V\|^{\frac{2\alpha}{2\alpha-1}}_{L^3(\R^3)}\big\|g_\varepsilon \vartriangle^h_kV\big\|^{2}_{L^2(\R^3)}+\|V\|_{L^3(\R^3)}\big\|g_\varepsilon \vartriangle^h_kV\big\|^{2}_{L^2(\R^3)}+\frac{1}{16}\big\|g_\varepsilon\Lambda^\alpha \vartriangle^h_kV\big\|^{2}_{L^2(\R^3)}\\
\leq& \|V\|^{\frac{2\alpha}{2\alpha-1}}_{L^3(\R^3)}\big\| V\big\|^{2}_{\dot{H}^\alpha(\R^3)}+\|V\|_{L^3(\R^3)}\big\|V\big\|^{2}_{\dot{H}^\alpha(\R^3)}+\frac{1}{16}\big\|g_\varepsilon\Lambda^\alpha \vartriangle^h_kV\big\|^{2}_{L^2(\R^3)}.
\end{split}
\end{align}
On the other hand,  we see that
\[J_{8_2}=-\int_{\R^3}
\text{div}\,\big(g_\varepsilon \vartriangle_k^{h}V(x+he_k)V\big)\big(g_\varepsilon \vartriangle^h_kV\big)\,\mathrm{d}x+\int_{\R^3}
\vartriangle_k^{h}V(x+he_k)\cdot\nabla g_\varepsilon V\big(g_\varepsilon
\vartriangle^h_kV\big)\,\mathrm{d}x.\]
Furthermore,  we obtain by the H\"older inequality that
\begin{align*}
J_{8_2}
\leq&\big \|\text{div}\,\big(g_\varepsilon \vartriangle_k^{h}V(\cdot+he_k)V\big)\big\|_{\dot H^{-\alpha}(\R^3)}\big\|g_\varepsilon \vartriangle^h_kV\big\|_{\dot H^\alpha(\R^3)}\\
&+\|\nabla g_\varepsilon\|_{L^\infty(\R^3)}\big\|\vartriangle_k^{h}V(\cdot+he_k)\big\|_{L^2(\R^3)}\|V\|_{L^4(\R^3)}
\|g_\varepsilon
\vartriangle^h_kV\|_{L^4(\R^3)}\\
\leq&\big \|g_\varepsilon \vartriangle_k^{h}V(\cdot+he_k)V\big\|_{\dot H^{1-\alpha}(\R^3)}\big\|g_\varepsilon \Lambda^\alpha \vartriangle^h_kV\big\|_{L^2(\R^3)}\\
&+\big \|g_\varepsilon \vartriangle_k^{h}V(\cdot+he_k)V\big\|_{\dot H^{1-\alpha}(\R^3)}\big\|[g_\varepsilon, \Lambda^\alpha ]\vartriangle^h_kV\big\|_{L^2(\R^3)}\\
&+\|\nabla g_\varepsilon\|_{L^\infty(\R^3)}\big\|\vartriangle_k^{h}V(\cdot+he_k)\big\|_{L^2(\R^3)}\|V\|_{L^4(\R^3)}
\|g_\varepsilon \vartriangle^h_kV\|^{\frac{4\alpha-3}{4\alpha}}_{L^2(\R^3)}\|g_\varepsilon \vartriangle^h_kV\|^{\frac{3}{4\alpha}}_{\dot{H}^\alpha(\R^3)},\end{align*}
it follows that
\begin{align*}
J_{8_2}\leq&\big \|g_\varepsilon \vartriangle_k^{h}V(\cdot+he_k)V\big\|_{\dot H^{1-\alpha}(\R^3)}\big(\big\|g_\varepsilon \Lambda^\alpha \vartriangle^h_kV\big\|_{L^2(\R^3)}
+\big\| \Lambda^\alpha V\big\|_{L^2(\R^3)}\big)\\
&+\|\nabla g_\varepsilon\|_{L^\infty(\R^3)}\big\|V\big\|_{\dot{H}^\alpha(\R^3)}\|V\|_{L^4(\R^3)}
\|g_\varepsilon \vartriangle^h_kV\|^{\frac{4\alpha-3}{4\alpha}}_{L^2(\R^3)}\|g_\varepsilon \Lambda^\alpha \vartriangle^h_k V\|^{\frac{3}{4\alpha}}_{L^2(\R^3)}
\\
&+\|\nabla g_\varepsilon\|_{L^\infty(\R^3)}\big\|V\big\|_{\dot{H}^\alpha(\R^3)}\|V\|_{L^4(\R^3)}
\|g_\varepsilon \vartriangle^h_kV\|^{\frac{4\alpha-3}{4\alpha}}_{L^2(\R^3)}\|[g_\varepsilon,\Lambda] \vartriangle^h_kV\|^{\frac{3}{4\alpha}}_{L^2(\R^3)}.
\end{align*}
Thanks to the Bony para-product decomposition, one has
\begin{align*}
&\big \|g_\varepsilon \vartriangle_k^{h}V(\cdot+he_k)V\big\|_{\dot H^{1-\alpha}(\R^3)}\\
\leq&\big \|g_\varepsilon \vartriangle_k^{h}V(\cdot+he_k)\big\|_{L^{\frac{3}{2\alpha-1}}(\R^3)}\big\|V\big\|_{\dot W^{1-\alpha,\frac{6}{5-4\alpha}}(\R^3)}\\&+\|g_\varepsilon \vartriangle_k^{h}V(\cdot+he_k)\big\|_{\dot W^{1-\alpha,\frac{3}{\alpha}}(\R^3)}\big\|V\big\|_{L^\frac{6}{3-2\alpha}(\R^3)}\\
\leq& \big \|g_\varepsilon \vartriangle_k^{h}V(\cdot+he_k)\big\|^{\frac{6\alpha-5}{2\alpha}}_{L^{2}(\R^3)}\big \|g_\varepsilon \vartriangle_k^{h}V(\cdot+he_k)\big\|^{\frac{5-4\alpha}{2\alpha}}_{\dot{H}^{\alpha}(\R^3)}\big\|V\big\|_{\dot H^{\alpha}(\R^3)}\\&+\big \|g_\varepsilon \vartriangle_k^{h}V(\cdot+he_k)\big\|^{\frac{6\alpha-5}{2\alpha}}_{L^{2}(\R^3)}\big \|g_\varepsilon \vartriangle_k^{h}V(\cdot+he_k)\big\|^{\frac{5-4\alpha}{2\alpha}}_{\dot{H}^{\alpha}(\R^3)}\big\|V\big\|_{\dot{H}^\alpha(\R^3)}\\
\leq&\big \| V\big\|^{\frac{8\alpha-5}{2\alpha}}_{\dot{H}^{\alpha}(\R^3)}\big \|g_\varepsilon \vartriangle_k^{h}V(\cdot+he_k)\big\|^{\frac{5-4\alpha}{2\alpha}}_{\dot{H}^{\alpha}(\R^3)}.
\end{align*}
By the triangle inequality,   we observe  that  for each $h\in [-1,1],$
\begin{align*}
&\big \|g_\varepsilon \vartriangle_k^{h}V(\cdot+he_k)\big\|_{\dot{H}^{\alpha}(\R^3)}\\
\leq&\big \|[\Lambda,g_\varepsilon] \vartriangle_k^{h}V(\cdot+he_k)\big\|^{\frac{5-4\alpha}{2\alpha}}_{\dot{H}^{\alpha}(\R^3)}\\
&+\left(\int_{\R^3}\frac{1+\varepsilon|x+he_k|^2}{1+\varepsilon|x|^2}\frac{1}{1+\varepsilon|x+he_k|^2}\left| \Lambda^\alpha \vartriangle_k^{h}V(\cdot+he_k) \right|^2\,\mathrm{d}x\right)^{\frac{1}{2}}\\
\leq&\big \|V\big\|_{\dot{H}^{\alpha}(\R^3)}+2\big\| g_\varepsilon \Lambda^\alpha \vartriangle_k^hV\big\|_{L^2(\R^3)}.
\end{align*}
This implies
\begin{equation*}
\big \|g_\varepsilon \vartriangle_k^{h}V(\cdot+he_k)V\big\|_{\dot H^{1-\alpha}(\R^3)}
\leq C\big \|V\big\|^2_{\dot{H}^{\alpha}(\R^3)}+C \|V\big\|^{\frac{8\alpha-5}{2\alpha}}_{\dot{H}^{\alpha}(\R^3)}\big\| g_\varepsilon \Lambda^\alpha \vartriangle_k^hV\big\|^{\frac{5-4\alpha}{2\alpha}}_{L^2(\R^3)}.
\end{equation*}
Therefore we have by the Young inequality that
\begin{align*}
J_{8_2}
\leq&C\|V\|^3_{H^\alpha(\R^3)}+C \|V\big\|^{\frac{8\alpha-5}{2\alpha}}_{\dot{H}^{\alpha}(\R^3)}\big\| g_\varepsilon \Lambda^\alpha \vartriangle_k^hV\big\|^{\frac{5-4\alpha}{2\alpha}}_{L^2(\R^3)}\\&
+\big\|V\big\|_{\dot{H}^\alpha(\R^3)}\|V\|_{L^4(\R^3)}
\|g_\varepsilon \vartriangle^h_kV\|^{\frac{4\alpha-3}{4\alpha}}_{L^2(\R^3)}\|g_\varepsilon \Lambda^\alpha \vartriangle^h_kV\|^{\frac{3}{4\alpha}}_{L^2(\R^3)}\\
\leq&C\|V\|^3_{H^\alpha(\R^3)}+C \|V\big\|^{\frac{16\alpha-10}{6\alpha-5}}_{\dot{H}^{\alpha}(\R^3)}
+\big\|V\big\|^{\frac{3(4\alpha-1)}{8\alpha-3}}_{\dot{H}^\alpha(\R^3)} +\frac{1}{16}\big\| g_\varepsilon \Lambda^\alpha \vartriangle_k^hV\big\|^{2}_{L^2(\R^3)}.
\end{align*}
Combining the above inequalities  with estimate \eqref{add-eq-009}, we get
 \begin{align*}
J_8\le&\|V\|^{\frac{2\alpha}{2\alpha-1}}_{L^3(\R^3)}\big\| V\big\|^{2}_{\dot{H}^\alpha(\R^3)}+\|V\|_{L^3(\R^3)}\big\|V\big\|^{2}_{\dot{H}^\alpha(\R^3)}+\frac{1}{8}\big\|g_\varepsilon\Lambda^\alpha \vartriangle^h_kV\big\|^{2}_{L^2(\R^3)}\\
 &+C\|V\|^3_{H^\alpha(\R^3)}+C \|V\big\|^{\frac{16\alpha-10}{6\alpha-5}}_{\dot{H}^{\alpha}(\R^3)}
+\big\|V\big\|^{\frac{3(4\alpha-1)}{8\alpha-3}}_{\dot{H}^\alpha(\R^3)}.
\end{align*}
As for $I_9$, we decompose  it into two parts as follows
\begin{align*}
J_9=-\int_{\R^3}
U_0\cdot\nabla \vartriangle_k^{h}V\,\big(g^2_\varepsilon \vartriangle^h_kV\big)\,\mathrm{d}x-\int_{\R^3}
\vartriangle_k^{h}U_0(x+he_k)\cdot\nabla V\,\big(g^2_\varepsilon \vartriangle^h_kV\big)\,\mathrm{d}x:=J_{9_1}+J_{9_2}.
\end{align*}
Since $\text{div}\,V=0$, we have
\begin{align*}
J_{9_1}=&-\int_{\R^3}
U_0\cdot\nabla(g_\varepsilon \vartriangle_k^{h}V)\,\big(g_\varepsilon \vartriangle^h_kV\big)\,\mathrm{d}x+\int_{\R^3}
U_0\cdot\nabla g_\varepsilon \,\vartriangle_k^{h}V\,\big(g_\varepsilon \vartriangle^h_kV\big)\,\mathrm{d}x\\
=&-2\int_{\R^3}
U_0\cdot\frac{\varepsilon x}{1+\varepsilon|x|^2} \,\vartriangle_k^{h}V\,\big(g_\varepsilon \vartriangle^h_kV\big)\,\mathrm{d}x.
\end{align*}
By the H\"older inequality and Lemma \ref{lem-equi}, we find that
\begin{align}\label{add-eq-0010}
J_{9_1}&\leq\|U_0\|_{L^\infty(\R^3)}\|g_\varepsilon\|_{L^\infty(\R^3)}\big\|\vartriangle_k^{h}V\big\|^2_{L^2(\R^3)}
\leq\|U_0\|_{L^\infty(\R^3)}\big\|V\big\|^2_{\dot{H}^{\alpha}(\R^3)}.
\end{align}
Similarly, we can show that
\begin{align*}
J_{9_2}
\leq&\big\|\vartriangle_k^{h}U_0(\cdot+he_k)\cdot\nabla V\big\|_{H^{-\alpha}(\R^3)}\big\|g^2_\varepsilon \vartriangle^h_kV\big\|_{\dot{H}^\alpha(\R^3)}\\
\leq&\big\|\vartriangle_k^{h}U_0(\cdot+he_k)\big\|_{L^\infty(\R^3)}\|V\|_{\dot{H}^{1-\alpha}(\R^3)}\big\|g_\varepsilon^2\Lambda^\alpha \vartriangle^h_kV\big\|_{L^2(\R^3)}\\
&+\big\|\vartriangle_k^{h}U_0(\cdot+he_k)\big\|_{L^\infty(\R^3)}\|V\|_{\dot{H}^{1-\alpha}(\R^3)}\big\|[g_\varepsilon^2,\Lambda^\alpha] \vartriangle^h_kV\big\|_{L^2(\R^3)}\\
\leq&\|U_0\|_{\dot{W}^{\alpha,\infty}(\R^3)}\|V\|_{{H}^\alpha(\R^3)}\big\|g_\varepsilon\Lambda^\alpha D^h_kV\big\|_{L^2(\R^3)}+\|U_0\|_{\dot{W}^{\alpha,\infty}(\R^3)}\|V\|^2_{{H}^\alpha(\R^3)}.
\end{align*}
Thus we have by the Young inequality that
\begin{align*}
J_{9_2}\leq C\left(\|U_0\|_{\dot{W}^{\alpha,\infty}(\R^3)} +\|U_0\|^2_{\dot{W}^{\alpha,\infty}(\R^3)}\right)\|V\|^2_{{H}^\alpha(\R^3)}+\frac{1}{16}\big\|g_\varepsilon\Lambda^\alpha D^h_kV\big\|^2_{L^2(\R^3)}.
\end{align*}
Combining this inequality with \eqref{add-eq-0010} yields
$$J_9\le C\left(\|U_0\|_{\dot{W}^{\alpha,\infty}(\R^3)} +\|U_0\|^2_{\dot{W}^{\alpha,\infty}(\R^3)}+\|g_\varepsilon\|_{L^\infty(\R^3)}\right)\|V\|^2_{{H}^\alpha(\R^3)}
+\frac{1}{16}\big\|g_\varepsilon\Lambda^\alpha D^h_kV\big\|^2_{L^2(\R^3)}.
$$
For $I_{10}$, one writes
\[J_{10}=\int_{\R^3}
V\cdot\nabla U_0\,\vartriangle_k^{-h}\big(g^2_\varepsilon \vartriangle^h_kV\big)\,\mathrm{d}x+\int_{\R^3}
U_0\cdot\nabla U_0\,\vartriangle_k^{-h}\big(g^2_\varepsilon \vartriangle^h_kV\big)\,\mathrm{d}x:=J_{{10}_1}+J_{{10}_2}.\]
By the H\"older inequality and the Leibniz estimate, we finally obtain that
\begin{align*}
J_{{10}_1}
=&\int_{\R^3}
\vartriangle_k^{h}(V\cdot\nabla U_0)\,\big(g^2_\varepsilon \vartriangle^h_kV\big)\,\mathrm{d}x\\
\leq&\|g_\varepsilon\|^2_{L^\infty(\R^3)}\big\|\vartriangle_k^{h}(V\cdot\nabla U_0)\big\|_{L^2(\R^3)}\big\| \vartriangle^h_kV\big\|_{L^2(\R^3)}\\
\leq&C\left(\|\nabla U_0\|_{L^\infty(\R^3)}\|V\|_{\dot{H}^\alpha(\R^3)}+\|U_0\|_{\dot{B}^{1+\alpha}_{\infty,\infty}(\R^3)}\|V\|_{L^2(\R^3)}\right)\big\|  V\big\|_{\dot{H}^\alpha(\R^3)},
\end{align*}
and
\begin{align*}
J_{{10}_2}
=&\int_{\R^3}\vartriangle_k^{h}
\big(U_0\cdot\nabla U_0\big)\, g^2_\varepsilon \vartriangle^h_kV \,\mathrm{d}x\\
\leq&\|g_\varepsilon\|^2_{L^\infty(\R^3)}\big\|\vartriangle_k^{h}(U_0\cdot\nabla U_0)\big\|_{L^2(\R^3)}\big\| \vartriangle^h_kV\big\|_{L^2(\R^3)}\\
\leq&C\left(\|\nabla U_0\|_{L^\infty(\R^3)}\|U_0\|_{\dot{H}^\alpha(\R^3)}+\|U_0\|_{L^\infty(\R^3)}\|U_0\|_{\dot{H}^{1+\alpha}(\R^3)}\right)\big\|  V\big\|_{\dot{H}^{\alpha}(\R^3)}.
\end{align*}
Collecting  both estimates and then inserting the resulting estimate into \eqref{eq-w-h}, we eventually obtain that
\begin{align*}
\nu\big\|g_\varepsilon \vartriangle^h_k \nabla V\big\|^2_{L^2(\mathbb{R}^3)}+\big\| g_\varepsilon \Lambda^\alpha \vartriangle^h_kV  \big\|_{L^2(\R^3)}^2+\frac{5-4\alpha}{4\alpha}\big\|g_\varepsilon \vartriangle^h_k  V\big\|^2_{L^2(\mathbb{R}^3)}\leq C\big(U_0\big).
\end{align*}
Taking $\nu\to 0$ and $\varepsilon\to0$, we immediately have by the Lebesgue dominated convergence theorem that
\begin{align*}
\big\|  \Lambda^\alpha \vartriangle^h_kV  \big\|_{L^2(\R^3)}^2+\frac{5-4\alpha}{4\alpha}\big\| \vartriangle^h_k  V\big\|^2_{L^2(\mathbb{R}^3)}\leq C\big(U_0\big).
\end{align*}
Taking supremum with respective to $h$, we finally obtain by Lemma \ref{lem-equi} that
\begin{equation*}
\|V\|_{B^{2\alpha}_{2,\infty}(\R^3) }\leq C\big(U_0\big).
\end{equation*}
This estimate implies the desired estimate in Proposition \ref{prop3.2}.
\end{proof}

\subsection{$	\big\|V\big\|_{H^{1+\alpha}(\R^3)}$-estimate}

 The third  step is to establish the regularity of $V$ in the Sobolev space with higher order  by tha so-called bootstrap argument.
\begin{proposition}\label{prop3.3}
	Let  $\alpha\in(5/6,1)$, and $V\in H_{\sigma}^{\alpha}(\R^{3})$ be the weak solution which was established in Theorem~\ref{thm-1}. Then there exists $C>0$ such that
	\begin{equation}
	\big\|V\big\|_{H^{1+\alpha}(\R^3)}<C\left(U_0\right).
	\end{equation}
\end{proposition}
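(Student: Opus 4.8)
The plan is to reproduce the difference-quotient energy estimate of Proposition~\ref{prop3.2}, but now with the \emph{first-order} (integer) difference quotient $D_k^h u:=\frac{u(\cdot+he_k)-u(\cdot)}{h}$ in place of the fractional one $\vartriangle_k^{h^\alpha}$. The decisive new input is furnished by Proposition~\ref{prop3.2}: since $\alpha>5/6$ forces $2\alpha>1$, the embedding $B^{2\alpha}_{2,\infty}(\R^3)\hookrightarrow H^1(\R^3)$ guarantees $\nabla V\in L^2(\R^3)$ together with $\|\nabla V\|_{L^2(\R^3)}\le C(U_0)$. This $L^2$-bound on $\nabla V$ is precisely what renders the test function $\varphi=-D_k^{-h}\big(g_\varepsilon^2 D_k^h V\big)$, $g_\varepsilon=(1+\varepsilon|x|^2)^{-1/2}$, admissible in the viscous weak formulation \eqref{eq.weak}, whereas in Proposition~\ref{prop3.2} only the weaker fractional difference quotient could be inserted.

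First I would work at the viscous level with $V_\nu$ and plug $\varphi=-D_k^{-h}\big(g_\varepsilon^2 D_k^h V\big)$ into \eqref{eq.weak}. Carrying out the same integrations by parts as in the derivation of \eqref{eq-w-h} (the leading constant again coming from $\tfrac12\,\mathrm{div}(g_\varepsilon^2 x)$ together with the explicit lower-order coefficient), the three coercive contributions
\[
\nu\big\|g_\varepsilon D_k^h\nabla V\big\|_{L^2(\R^3)}^2+\big\|g_\varepsilon\Lambda^\alpha D_k^h V\big\|_{L^2(\R^3)}^2+\tfrac{5-4\alpha}{4\alpha}\big\|g_\varepsilon D_k^h V\big\|_{L^2(\R^3)}^2
\]
emerge on the left, while the right-hand side splits into commutator, pressure, drift, and nonlinear terms having the same algebraic shape as $J_1,\dots,J_{10}$. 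The commutator terms are handled verbatim by Lemma~\ref{lem-Comm} and Lemma~\ref{lem-g-}; the drift and force terms invoke Proposition~\ref{prop3.1} together with Lemma~\ref{Lfreeterm}. All of these close exactly as before, since they never require more than the $H^\alpha$-norm of $V$ supplemented by the now-available $\|\nabla V\|_{L^2(\R^3)}$.

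The delicate point, and the main obstacle, is once more the convection term $\int_{\R^3} V\cdot\nabla V\,D_k^{-h}\big(g_\varepsilon^2 D_k^h V\big)\,\mathrm{d}x$. I would integrate by parts and split it into a part transferring the difference onto $\nabla V$ and a part transferring it onto $V$, then apply the Bony paraproduct decomposition exactly as in the estimate of $J_8$, interpolating between $L^2(\R^3)$ and $\dot H^\alpha(\R^3)$. The gain relative to Proposition~\ref{prop3.2} is that $\nabla V\in L^2$ and the Sobolev embedding $\dot H^\alpha(\R^3)\hookrightarrow L^{6/(3-2\alpha)}(\R^3)$ let every product be bounded by $C(U_0)$ plus a small multiple of $\|g_\varepsilon\Lambda^\alpha D_k^h V\|_{L^2(\R^3)}^2$; it is here that the threshold $\alpha>5/6$ (equivalently $6\alpha-5>0$, so the interpolation exponents stay admissible) is genuinely used. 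Absorbing the small terms into the left-hand side then yields, uniformly in $h$, $\varepsilon$ and $\nu$,
\[
\big\|g_\varepsilon\Lambda^\alpha D_k^h V\big\|_{L^2(\R^3)}^2+\tfrac{5-4\alpha}{4\alpha}\big\|g_\varepsilon D_k^h V\big\|_{L^2(\R^3)}^2\le C(U_0).
\]

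Finally I would let $\nu\to0$ and $\varepsilon\to0$, using the uniform estimate \eqref{eq-energy-v} and Lebesgue dominated convergence to obtain $\sup_h\big(\|\Lambda^\alpha D_k^h V\|_{L^2(\R^3)}+\|D_k^h V\|_{L^2(\R^3)}\big)\le C(U_0)$ for each $k=1,2,3$. Since $\Lambda^\alpha$ is a Fourier multiplier it commutes with translations, so $\Lambda^\alpha D_k^h V=D_k^h\Lambda^\alpha V$; the Nirenberg difference-quotient criterion in $L^2$ then upgrades the uniform bound $\sup_h\|D_k^h\Lambda^\alpha V\|_{L^2(\R^3)}\le C(U_0)$ to $\partial_k\Lambda^\alpha V\in L^2(\R^3)$. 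Letting $k=1,2,3$ gives $\nabla\Lambda^\alpha V\in L^2(\R^3)$, that is $V\in\dot H^{1+\alpha}(\R^3)$, and combining with $V\in L^2(\R^3)$ completes the proof of $\|V\|_{H^{1+\alpha}(\R^3)}\le C(U_0)$.
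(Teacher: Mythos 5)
Your proposal follows essentially the same route as the paper: the paper likewise uses Proposition~\ref{prop3.2} to secure $\nabla V\in L^2(\R^3)$, tests the weak formulation with $-\vartriangle_k^{-h}\big(g_\varepsilon^2\vartriangle_k^h V\big)$ for the \emph{first-order} difference quotient, obtains the same coercive terms $\big\|g_\varepsilon\Lambda^\alpha\vartriangle_k^h V\big\|_{L^2}^2+\tfrac{5-4\alpha}{4\alpha}\big\|g_\varepsilon\vartriangle_k^h V\big\|_{L^2}^2$, and closes with the same commutator lemmas before letting $\varepsilon\to0$, $h\to0$. The only real divergence is cosmetic: the paper works directly with the limit equation \eqref{eq.weak-IIII} rather than the viscous one, and it absorbs the convection term $K_7$ by splitting $\R^3$ into $\mathbb{B}_R$ and $\mathbb{B}_R^c$ and exploiting the smallness of $\|\nabla V\|_{L^3(\mathbb{B}_R^c)}$ for large $R$, whereas you propose the $J_8$-style paraproduct/interpolation absorption — both are valid.
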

\begin{proof}
	Thanks to Proposition \ref{prop3.2}, we know that $\nabla v\in L^2(\R^3)$  and then we can write \eqref{eq.weak-ESTIMATE} as  follows
\begin{align}\label{eq.weak-IIII}
\begin{split}
&\int_{\mathbb{R}^3}\Lambda^\alpha V:\Lambda^\alpha\varphi\,\mathrm{d}x-\frac{1}{2\alpha}\int_{\mathbb{R}^3}x\cdot \nabla V\cdot\varphi\,\mathrm{d}x-\frac{2\alpha-1}{2\alpha}\int_{\mathbb{R}^3} V\cdot\varphi\,\mathrm{d}x\\
=&\int_{\mathbb{R}^3}P\, \mathrm{div}\,\varphi\,\mathrm{d}x+\int_{\mathbb{R}^3}V\cdot \nabla \varphi\cdot V\,\mathrm{d}x-\int_{\mathbb{R}^3}U_0\cdot \nabla  V\cdot \varphi\,\mathrm{d}x-\int_{\mathbb{R}^3}(V+U_0)\cdot \nabla  U_0\cdot \varphi\,\mathrm{d}x
\end{split}
\end{align}
for each $\varphi\in H^1_{\omega}(\R^3)$. Since $ \|V\|_{B^{2\alpha}_{2,\infty}(\R^3) }\leq C\big(U_0\big)$,
it is easy to check that $\vartriangle_k^{-h}\big(g^2_\varepsilon \vartriangle_k^{h}V\big)\in H^1_{\omega}(\R^3)$.
Here and in Proposition \ref{key}, we denote by $\vartriangle_k^{-h}V$ the difference quotient
$$\vartriangle_k^{-h}V=\frac{V(x+he_k)-V(x)}{h}, \quad\; h\in \mathbb R,\;\; h\not=0.$$
	Choosing the test function $\varphi=-\vartriangle_k^{-h}\big(g^2_\varepsilon \vartriangle_k^{h}V\big)$ in equality \eqref{eq.weak-IIII},  we immediately have
\begin{align}	\label{add-eq-0012}
	\begin{split}
	&-\int_{\mathbb{R}^3}\Lambda^\alpha V:\Lambda^\alpha\vartriangle_k^{-h}\big(g^2_\varepsilon \vartriangle_k^{h}V\big)\,\mathrm{d}x+\frac{1}{2\alpha}\int_{\mathbb{R}^3}x\cdot \nabla V\cdot\vartriangle_k^{-h}\big(g^2_\varepsilon \vartriangle_k^{h}V\big)\,\mathrm{d}x\\
		&+\frac{2\alpha-1}{2\alpha}\int_{\mathbb{R}^3} V\cdot\vartriangle_k^{-h}\big(g^2_\varepsilon \vartriangle_k^{h}V\big)\,\mathrm{d}x\\
		=&-\int_{\mathbb{R}^3}P\, \mathrm{div}\,\vartriangle_k^{-h}\big(g^2_\varepsilon \vartriangle_k^{h}V\big)\,\mathrm{d}x-\int_{\mathbb{R}^3}V\cdot \nabla\vartriangle_k^{-h}\big(g^2_\varepsilon \vartriangle_k^{h}V\big)\cdot V\,\mathrm{d}x\\
&+\int_{\mathbb{R}^3}U_0\cdot \nabla  V\cdot\vartriangle_k^{-h}\big(g^2_\varepsilon \vartriangle_k^{h}V\big)\,\mathrm{d}x+\int_{\mathbb{R}^3}(V+U_0)\cdot \nabla  U_0\cdot\vartriangle_k^{-h}\big( g^2_\varepsilon \vartriangle_k^{h}V\big)\,\mathrm{d}x.
\end{split}
\end{align}
First of all, we compute
 the term including the fractional operator to obtain
\begin{align*}
&-\int_{\R^3}\Lambda^\alpha V\cdot\Lambda^\alpha\,\vartriangle_k^{-h}\Big(g^2_\varepsilon \vartriangle_k^{h}V\Big)\,\mathrm{d}x\\
=&\int_{\R^3}g_\varepsilon \Lambda^\alpha\vartriangle_k^{h}V\cdot\, \Lambda^\alpha \big(g_\varepsilon \vartriangle_k^{h}V \big)\,\mathrm{d}x-\int_{\R^3} \Lambda^\alpha\vartriangle_k^{h}V\cdot\, \big([g_\varepsilon,\Lambda^\alpha] g_\varepsilon \vartriangle_k^{h}V \big)\,\mathrm{d}x\\
=&\big\| g_\varepsilon \Lambda^\alpha \vartriangle_k^{h}V  \big\|_{L^2(\R^3)}^2-\int_{\R^3}g_\varepsilon \Lambda^\alpha\vartriangle_k^{h}V\cdot\, \big([g_\varepsilon,\Lambda^\alpha]  \vartriangle_k^{h}V  \big)\,\mathrm{d}x\\&-\int_{\R^3} \Lambda^\alpha\vartriangle_k^{h}V\cdot\, \big([g_\varepsilon,\Lambda^\alpha] g_\varepsilon \vartriangle_k^{h}V \big)\,\mathrm{d}x.
\end{align*}
Integrating by parts, we rewrite the second and third term of right side of \label{add-eq-0012} to be
	\begin{align*}
&\frac{1}{2\alpha}\int_{\mathbb{R}^3}x\cdot \nabla V\cdot\vartriangle_k^{-h}\Big(g_\varepsilon^2\vartriangle_k^{h}V\Big)\,\mathrm{d}x+\frac{2\alpha-1}{2\alpha}\int_{\mathbb{R}^3} V\cdot\vartriangle_k^{-h}\Big(g_\varepsilon^2\vartriangle_k^{h}V\Big)\,\mathrm{d}x\\
=&-	\frac{1}{2\alpha}\int_{\mathbb{R}^3}g_{\varepsilon}^2(x+h\mathbf{e}_k)\cdot \nabla\vartriangle_k^{h} V\cdot \vartriangle_k^{h}V\,\mathrm{d}x-	 \frac{1}{2\alpha}\int_{\mathbb{R}^3}g_\varepsilon^2\big(\vartriangle_k^{h}x\big)\cdot \nabla V\cdot \vartriangle_k^{h}V\,\mathrm{d}x\\
&-\frac{2\alpha-1}{2\alpha}\int_{\mathbb{R}^3}g^2_\varepsilon \vartriangle_k^{h}V\cdot \vartriangle_k^{h}V\,\mathrm{d}x\\
=&\frac{5-4\alpha}{4\alpha}\big\|g_\varepsilon \vartriangle_k^{h}  V\big\|^2_{L^2(\mathbb{R}^3)}+\frac{1}{4\alpha}\int_{\R^3}x\cdot\nabla  g_\varepsilon^2\,\vartriangle_k^{h} V\cdot \vartriangle_k^{h}V\,\mathrm{d}x
-	\frac{h}{2\alpha}\int_{\mathbb{R}^3}g_\varepsilon^2 \partial_{x_k}\vartriangle_k^{h} V\cdot \vartriangle_k^{h}V\,\mathrm{d}x\\&-	 \frac{1}{2\alpha}\int_{\mathbb{R}^3}g_\varepsilon^2 \partial_{x_k}V\cdot \vartriangle_k^{h}V\,\mathrm{d}x.\end{align*}
From this, it follows that
\begin{align*}
&\frac{1}{2\alpha}\int_{\mathbb{R}^3}x\cdot \nabla V\cdot\vartriangle_k^{-h}\Big(g_\varepsilon^2\vartriangle_k^{h}V\Big)\,\mathrm{d}x+\frac{2\alpha-1}{2\alpha}\int_{\mathbb{R}^3} V\cdot\vartriangle_k^{-h}\Big(g_\varepsilon^2\vartriangle_k^{h}V\Big)\,\mathrm{d}x\\
=&\frac{5-4\alpha}{4\alpha}\big\|g_\varepsilon \vartriangle_k^{h}  V\big\|^2_{L^2(\mathbb{R}^3)} -\frac{1}{2\alpha}\int_{\R^3} \frac{\varepsilon |x|^2}{(1+\varepsilon|x|^2)^2}\,\vartriangle_k^{h} V\cdot \vartriangle_k^{h}V\,\mathrm{d}x\\
&+\frac{h}{4\alpha}\int_{\mathbb{R}^3}\partial_{x_k}g_\varepsilon^2\vartriangle_k^{h} V\cdot \vartriangle_k^{h}V\,\mathrm{d}x+\frac{1}{2\alpha}\int_{\mathbb{R}^3}g_\varepsilon^2 \partial_{x_k}V\cdot \vartriangle_k^{h}V\,\mathrm{d}x.
	\end{align*}
Summing up the above inductions, we have from \eqref{eq.weak-IIII} that
	 \begin{align*}
	 & \big\| g_\varepsilon \Lambda^\alpha \vartriangle_k^{h}V  \big\|_{L^2(\R^3)}^2+\frac{5-4\alpha}{4\alpha}\big\|g_\varepsilon \vartriangle_k^{h}  V\big\|^2_{L^2(\mathbb{R}^3)}\\
	\leq & \int_{\R^3}g_\varepsilon \Lambda^\alpha\vartriangle_k^{h}V\cdot\, \big([g_\varepsilon,\Lambda^\alpha]  \vartriangle_k^{h}V  \big)\,\mathrm{d}x+\int_{\R^3} \Lambda^\alpha\vartriangle_k^{h}V\cdot\, \big([g_\varepsilon,\Lambda^\alpha] g_\varepsilon \vartriangle_k^{h}V \big)\,\mathrm{d}x\\&+\frac{1}{2\alpha}\int_{\R^3} \frac{\varepsilon |x|^2}{1+\varepsilon|x|^2}\,\vartriangle_k^{h} V\cdot \vartriangle_k^{h}V\,\mathrm{d}x-\frac{h}{4\alpha}\int_{\mathbb{R}^3}\partial_{x_k}g_\varepsilon^2\vartriangle_k^{h} V\cdot \vartriangle_k^{h}V\,\mathrm{d}x\\
	&+\frac{1}{2\alpha}\int_{\mathbb{R}^3}g_\varepsilon^2 \partial_{x_k}V\cdot \vartriangle_k^{h}V\,\mathrm{d}x+\int_{\R^3}\nabla P\,\vartriangle_k^{-h}\big(g^2_\varepsilon \vartriangle_k^{h}V\big)\,\mathrm{d}x\\&+\int_{\R^3}
	V\cdot\nabla V\,\vartriangle_k^{-h}\big(g^2_\varepsilon \vartriangle_k^{h}V\big)\,\mathrm{d}x+\int_{\R^3}
	U_0\cdot\nabla V\,\vartriangle_k^{-h}\big(g^2_\varepsilon \vartriangle_k^{h}V\big)\,\mathrm{d}x\\&+\int_{\R^3}
	V\cdot\nabla U_0\,\vartriangle_k^{-h}\big(g^2_\varepsilon \vartriangle_k^{h}V\big)\,\mathrm{d}x+\int_{\R^3}
	U_0\cdot\nabla U_0\,\vartriangle_k^{-h}\big(g^2_\varepsilon \vartriangle_k^{h}V\big)\,\mathrm{d}x
\triangleq\sum_{i=1}^{10}K_i.
	 \end{align*}
   By the H\"older inequality, the Cauchy-Schwarz inequality and  Lemma \ref{lem-Comm}, we readily have
	 \begin{align*}
	K_1\
	 \leq \big\|g_\varepsilon \Lambda^\alpha\vartriangle_k^{h}V\big\|_{L^2(\R^3)}\big\|\vartriangle_k^{h}V\big\|_{L^2(\R^3)} \leq C\|V\|^2_{\dot{H}^1(\R^3)}+\frac{1}{16}\big\|g_\varepsilon \Lambda^\alpha\vartriangle_k^{h}V\big\|^2_{L^2(\R^3)}.
	  \end{align*}
	  We see that
	  \begin{align*}
	 K_2
	 =&-\int_{\R^3} g_\varepsilon\Lambda^\alpha\vartriangle_k^{h}V\cdot\, \big([g_\varepsilon,\Lambda^\alpha]  \vartriangle_k^{h}V \big)\,\mathrm{d}x -\int_{\R^3} \Lambda^\alpha\vartriangle_k^{h}V\cdot\, \big([g^2_\varepsilon,\Lambda^\alpha]   \vartriangle_k^{h}V \big)\,\mathrm{d}x\\
	 :=&K_{2_1}+K_{2_2}.
	  \end{align*}
	By the H\"older inequality, Cauchy-Schwarz inequality and \eqref{c-2}, we see that
\begin{align}
K_{2_1}
\leq C\|V\|^2_{\dot{H}^1(\R^3)}+\frac{1}{16}\big\|g^{\frac12}_\varepsilon \Lambda^\alpha\vartriangle_k^{h}V\big\|^2_{L^2(\R^3)}.\label{add-eq-0013}
\end{align}
On the other hand, by the H\"older inequality and Lemma \ref{lem-g-}, one has
	  \begin{align*}
K_{2_2}\leq&\big\|g^{\frac12}_\varepsilon\Lambda^\alpha\vartriangle_k^{h}V\big\|_{L^2(\R^3)}\big\|g_\varepsilon^{-\frac12}[g^2_\varepsilon,\Lambda^\alpha]   \vartriangle_k^{h}V  \big\|_{L^2(\R^3)}\\
	  \leq&C\max\big\{ \varepsilon^{\frac{1}{2}},\varepsilon^{\frac{1}{4}}\big\}\big\|g^\frac12_\varepsilon\Lambda^\alpha\vartriangle_k^{h}V\big\|_{L^2(\R^3)}\big\|  \vartriangle_k^{h}V \big\|_{L^2(\R^3)}\\
	  \leq&C\|V\|^2_{\dot{H}^1(\R^3)}+\frac{1}{16}\varepsilon^{\frac12}\big\|g^{\frac12}_\varepsilon \Lambda^\alpha\vartriangle_k^{h}V\big\|^2_{L^2(\R^3)}.
	  \end{align*}
This inequality together with \eqref{add-eq-0013} yields
$$K_2\le C\|V\|^2_{\dot{H}^1(\R^3)}+\frac{1}{8}\big\|g^{\frac12}_\varepsilon \Lambda^\alpha\vartriangle_k^{h}V\big\|^2_{L^2(\R^3)}.$$
For $K_3,\,K_4$ and $K_5$, it is obvious that
$$
K_3+K_4+K_5\le C\|V\|^2_{H^{1}(\R^{3})}.
$$
For $K_6$, we see by that
\begin{align*}
\int_{\R^3}\nabla P\,\vartriangle_k^{-h}\big(g^2_\varepsilon \vartriangle_k^{h}V\big)\,\mathrm{d}x=&\int_{\R^3}\vartriangle_k^{h}P\,\big(\nabla g^2_\varepsilon\cdot \vartriangle_k^{h}V\big)\,\mathrm{d}x.
\end{align*}
Hence, we obtain by  the Cauchy-Schwartz inequality
\begin{align*}
K_6
\leq&2\big\|\vartriangle_k^{h}P\big\|_{L^2(\R^3)} \|\nabla g^2_\varepsilon\|_{L^\infty(\R^3)}\big\|\vartriangle_k^{h}V\big\|_{L^2(\R^3)}\\
\leq &C\big(\|V\|_{L^\infty(\R^3)}\|\nabla V\|_{L^2(\R^3)}+\|U_0\|_{L^\infty(\R^3)}\|\nabla V\|_{L^2(\R^3)}\\
&+\|V\|_{L^\infty(\R^3)}\|\nabla U_0\|_{L^2(\R^3)}+\|U_0\|_{L^\infty(\R^3)}\|\nabla U_0\|_{L^2(\R^3)}\big)\|V\|_{\dot{H}^1(\R^3)},
\end{align*}
where we have used the fact that
$\nabla g^2_\varepsilon= -\frac{2\varepsilon x }{(1+\varepsilon|x|^2)^2}.$

Since $\text{div}\,V=0$,  we observe that
\begin{align*}
K_7
=&-\int_{\R^3}
\vartriangle_k^{h}V(x+h\mathbf{e}_k)\cdot\nabla V\big(g^2_\varepsilon \vartriangle_k^{h}V\big)\,\mathrm{d}x+\int_{\R^3}
V \cdot \nabla g_\varepsilon \,\vartriangle_k^{h} V\big(g_\varepsilon \vartriangle_k^{h}V\big)\,\mathrm{d}x:=K_{7_1}+K_{7_2}.
\end{align*}
By the H\"older inequality, we get
\begin{align}
K_{7_2}\leq&\|V\|_{L^\infty(\R^3)}\|\nabla g_\varepsilon\|_{L^\infty(\R^3)}\big\|\vartriangle_k^{h} V\big\|_{L^2(\R^3)}\big\|g_\varepsilon \vartriangle_k^{h}V\big\|_{L^2(\R^3)}\nonumber\\
\leq&C \|V\|^2_{L^\infty(\R^3)}\|V\|^2_{\dot{H}^1(\R^3)}+\frac{1-\alpha}{16\alpha}\big\|g_\varepsilon \vartriangle_k^{h}V\big\|_{L^2(\R^3)}^2.\label{add-eq-0014}
\end{align}
Since $V\in B^{2\alpha}_{2,\infty}(\R^{3})$ with $\alpha>5/6 $ and $ B^{2\alpha}_{2,\infty}(\R^{3})\hookrightarrow\dot{H}^1(\R^3)$,
there  exists a real number $R_0>0$ such that  for each $R>R_0,$
$$\|\nabla V\|_{L^3(\mathbb{B}^c_R(0))}\ll1.$$
Hence, we have by the H\"older inequality that
\begin{align*}
K_{7_1}=&-\int_{\R^3}\frac{g_\varepsilon(x)}{g_\varepsilon(x+h\mathbf{e}_k)}
\left(g_\varepsilon (x+h\mathbf{e}_k)\vartriangle_k^{h}V(x+h\mathbf{e}_k)\right)\cdot\nabla V\,\big(g_\varepsilon \vartriangle_k^{h}V\big)\,\mathrm{d}x\\
\leq&CR\|\nabla V\|_{L^3(\mathbb{B}_R(0))}\big\|\vartriangle_k^{h}V\big\|^2_{L^3(\R^3)}+C\|\nabla V\|_{L^3(\mathbb{B}^c_R(0))}\big\|g_\varepsilon \vartriangle_k^{h}V\big\|^2_{L^3(\R^3)}\\
\leq&CR\|\nabla V\|^3_{L^3(\R^3)}+C\|\nabla V\|_{L^3\big( \mathbb{B}^c_R(0)\big)}\Big(\big\|g_\varepsilon \vartriangle_k^{h}V\big\|^2_{L^2(\R^3)}+\big\|g_\varepsilon \Lambda^\alpha \vartriangle_k^{h}V\big\|^2_{L^2(\R^3)}\Big)\\
\leq&CR\|\nabla V\|^3_{L^3(\R^3)}+\frac1{16}\Big(\big\|g_\varepsilon \vartriangle_k^{h}V\big\|^2_{L^2(\R^3)}+\big\|g_\varepsilon \Lambda^\alpha \vartriangle_k^{h}V\big\|^2_{L^2(\R^3)}\Big).
\end{align*}
Collecting this inequality with \eqref{add-eq-0014}, we obtain
$$
K_7\le C \|V\|^2_{L^\infty(\R^3)}\|V\|^2_{\dot{H}^1(\R^3)}+CR\|\nabla V\|^3_{L^3(\R^3)}+\frac1{8}\Big(\big\|g_\varepsilon \vartriangle_k^{h}V\big\|^2_{L^2(\R^3)}+\big\|g_\varepsilon \Lambda^\alpha \vartriangle_k^{h}V\big\|^2_{L^2(\R^3)}\Big).
$$
We decompose $K_8$  by the incompressible condition $\text{div}\,V=0$ that
\begin{align*}
K_8
=&-\int_{\R^3}
\vartriangle_k^{h}U_0(x+h\mathbf{e}_k)\cdot\nabla V\,\big(g^2_\varepsilon \vartriangle_k^{h}V\big)\,\mathrm{d}x+\int_{\R^3}
U_0 \cdot \nabla g_\varepsilon \,\vartriangle_k^{h} V\,\big(g_\varepsilon \vartriangle_k^{h}V\big)\,\mathrm{d}x\\
:=&K_{8_1}+K_{8_2}.
\end{align*}
By the H\"older inequality, we obtain that
\begin{align*}
K_{8_2}\leq&\|U_0\|_{L^\infty(\R^3)}\big\|\nabla g_\varepsilon\big\|_{L^\infty(\R^3)}\|\vartriangle_k^{h}V\|_{L^2(\R^3)}\big\|g_\varepsilon \vartriangle_k^{h}V\big\|_{L^2(\R^3)}\\
\leq &C\|U_0\|_{L^\infty(\R^3)}\|\nabla V\|_{L^2(\R^3)}\big\|g_\varepsilon \vartriangle_k^{h}V\big\|_{L^2(\R^3)}\\
\leq &C\|U_0\|^2_{L^\infty(\R^3)}\|\nabla V\|^2_{L^2(\R^3)}+\frac{1-\alpha}{16\alpha}\big\|g_\varepsilon \vartriangle_k^{h}V\big\|^2_{L^2(\R^3)}.
\end{align*}
and
\begin{align*}
K_{8_1}
=&-\int_{\R^3}\frac{g_\varepsilon(x)}{g_\varepsilon(x+h\mathbf{e}_k)}
\big(g_\varepsilon(x+h\mathbf{e}_k)\vartriangle_k^{h}U_0(x+h\mathbf{e}_k)\big)\cdot\nabla V\,\big(g_\varepsilon \vartriangle_k^{h}V\big)\,\mathrm{d}x\\
\leq&\big\|\nabla V\big\|_{L^3(\R^3)}\big\|g_\varepsilon \vartriangle_k^{h}U_0\big\|_{L^3(\R^3)}\big\|g_\varepsilon \vartriangle_k^{h}V\big\|_{L^3(\R^3)}\\
\leq&\big\|\nabla V\big\|_{L^3(\R^3)}\big\|g_\varepsilon \vartriangle_k^{h}U_0\big\|_{L^3(\R^3)}\big\|g_\varepsilon \vartriangle_k^{h}V\big\|^{\frac{2\alpha-1}{2\alpha}}_{L^2(\R^3)}\big\|g_\varepsilon \vartriangle_k^{h}V\big\|^{\frac{1}{2\alpha}}_{\dot{H}^\alpha(\R^3)}.
\end{align*}
Collecting estimates concerning $K_{8_1}$ and $K_{8_2}$, we readily have
\begin{align*}
K_8\le &C\|U_0\|^2_{L^\infty(\R^3)}\|\nabla V\|^2_{L^2(\R^3)}+\big\|\nabla V\big\|^{2}_{L^3(\R^3)}\big\|g_{\varepsilon}\vartriangle_k^{h}U_0\big\|^{2}_{L^3(\R^3)}
\\&+\frac{1-\alpha}{8\alpha}\big\| \vartriangle_k^{h}V\big\|^2_{L^2(\R^3)}+\frac{1}{16}\big\|\Lambda^\alpha \vartriangle_k^{h}V  \big\|_{L^2(\R^3)}^2.
\end{align*}
Note that
\begin{align*}
K_9=-\int_{\R^3}
\vartriangle_k^{h}V(x+h\mathbf{e}_k)\cdot\nabla U_0\,\big(g^2_\varepsilon \vartriangle_k^{h}V\big)\,\mathrm{d}x-\int_{\R^3}
V(x+h\mathbf{e}_k)\cdot\nabla\vartriangle_k^{h}U_0\,\big(g^2_\varepsilon \vartriangle_k^{h}V\big)\,\mathrm{d}x,
\end{align*}
one easily obtain by the H\"older inequality that
\begin{align*}K_9
\leq&\big\|\vartriangle_k^{h}V\big\|^2_{L^2(\R^3)}\big\|g_\varepsilon^2\nabla U_0\big\|_{L^\infty(\R^3)}\|\vartriangle_k^{h}V\|_{L^2(\R^3)}\\
&+\|g_\varepsilon V\|_{L^2(\R^3)}\big\|\nabla\vartriangle_k^{h}U_0\big\|_{L^2(\R^3)}\big\|g_\varepsilon \vartriangle_k^{h}V\big\|_{L^2(\R^3)}\\
\leq&\big\|g_\varepsilon^2\nabla U_0\big\|_{L^\infty(\R^3)}\|\nabla V\|^{3}_{L^2(\R^3)}+\big\| V\big\|_{L^2(\R^3)}\big\|\nabla^2U_0\big\|_{L^\infty(\R^3)}\big\|g_\varepsilon \vartriangle_k^{h}V\big\|_{L^2(\R^3)}.
\end{align*}

At last,  we rewrite $K_{10}$ with
\begin{align*}
K_{10}
=&-\int_{\R^3}
\vartriangle_k^{h}U_0(x+h\mathbf{e}_k)\cdot\nabla U_0\,\big(g^2_\varepsilon \vartriangle_k^{h}V\big)\,\mathrm{d}x
-\int_{\R^3}
U_0\cdot\nabla\vartriangle_k^{h} U_0\,\big(g^2_\varepsilon \vartriangle_k^{h}V\big)\,\mathrm{d}x.
\end{align*}
Moreover, by the H\"older inequality, we get
\begin{align*}
K_{10}
\leq\|\nabla U_0\|_{L^2(\R^3)}\big\|g_\varepsilon^2\nabla U_0\big\|_{L^\infty(\R^3)}\|\nabla V\|_{L^2(\R^3)}+\big\|g^2_\varepsilon U_0\big\|_{L^\infty(\R^3)}\big\|\nabla^2U_0\big\|_{L^2(\R^3)}\|\nabla V\|_{L^2(\R^3)}.
\end{align*}
Collecting estimates for $K_1$-$K_{10}$, we finally obtain
\begin{align*}
 \big\| g_\varepsilon \Lambda^\alpha \vartriangle_k^{h}V  \big\|_{L^2(\R^3)}^2+ \big\|g_\varepsilon \vartriangle_k^{h}  V\big\|^2_{L^2(\mathbb{R}^3)}\leq C\big(U_0\big).
\end{align*}
Taking $\varepsilon\to 0$ and $h\to0$, we readily have
\begin{align*}
\big\|V\big\|_{H^{1+\alpha}(\R^3)}\leq C\big(U_0\big).
\end{align*}
This completes the proof of Proposition \ref{prop3.3}.
\end{proof}
\subsection{ $\big\|V\big\|_{H_{\omega}^{1+\alpha}(\R^3)}$-esitmate}

\vskip0.1cm
Based on estimates in Step 1-Step 3, the fourth step is devoted to
developing the high regularity  for $ V$  in the weighted Hilbert space by choosing the suitable weak solution.

\begin{proposition}\label{key}
	Let  $\alpha\in(5/6,1)$, and $V\in H_{\sigma}^{\alpha}(\R^{3})$ be the weak solution which was established in Theorem~\ref{thm-1}. Then $V\in H_{\omega}^{1+\alpha}(\R^3)$ satisfies
	\begin{equation*}
	\big\|(1+|\cdot|^{2})^{\frac{1}{4}}\,V\big\|_{H^{1+\alpha}(\R^3)}<C\left(U_0\right).
	\end{equation*}
\end{proposition}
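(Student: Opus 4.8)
The plan is to run exactly the difference–quotient scheme that proved Proposition~\ref{prop3.3}, but with the \emph{growing} weight $h_\varepsilon=\sqrt{(1+|x|)/(1+\varepsilon|x|^2)}$ of Proposition~\ref{prop3.1} inserted in place of $g_\varepsilon$. Since Propositions~\ref{prop3.1}--\ref{prop3.3} already give $V\in H^{1+\alpha}(\R^3)\cap H^\alpha_\omega(\R^3)\cap B^{2\alpha}_{2,\infty}(\R^3)$ with all norms $\le C(U_0)$, I can work directly in the weak formulation \eqref{eq.weak-IIII} (no viscosity is needed, just as in the proof of Proposition~\ref{prop3.3}) and test it against
\[
\varphi=-\vartriangle_k^{-h}\big(h_\varepsilon^2\,\vartriangle_k^{h}V\big),\qquad k=1,2,3 .
\]
For each fixed $\varepsilon\in(0,1]$ the factor $\langle x\rangle h_\varepsilon^2$ is bounded, so $\varphi\in H^1_\omega(\R^3)$ is admissible by the regularity of Proposition~\ref{prop3.3}; the sole purpose of $\varepsilon$ is to restore decay at infinity, and it will be removed only at the very end. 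The goal is a bound, uniform in $\varepsilon$ and $h$, of $\big\|h_\varepsilon\vartriangle_k^{h}V\big\|_{\dot H^\alpha(\R^3)}^2$ together with a coercive lower-order term $\gtrsim\big\|h_\varepsilon\vartriangle_k^{h}V\big\|_{L^2(\R^3)}^2$.

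The leading terms are produced as in Steps~1 and~3. For the fractional term I would use the \emph{symmetric} splitting $h_\varepsilon^2=h_\varepsilon\cdot h_\varepsilon$, writing $\Lambda^\alpha(h_\varepsilon^2\vartriangle_k^h V)=h_\varepsilon\Lambda^\alpha(h_\varepsilon\vartriangle_k^h V)+[\Lambda^\alpha,h_\varepsilon](h_\varepsilon\vartriangle_k^h V)$ and symmetrising once more; this yields the coercive $\big\|h_\varepsilon\vartriangle_k^{h}V\big\|_{\dot H^\alpha}^2$ while all commutators involve only the \emph{single} power $h_\varepsilon$. This is the key structural point: $h_\varepsilon\in\dot C^{3/4}(\R^3)\cap\dot W^{1,\infty}(\R^3)$ uniformly in $\varepsilon$ by \eqref{estimate-h}, so Lemma~\ref{lem-Comm} controls each such commutator by $C\|\cdot\|_{L^2}$, and one \emph{avoids} the ill-behaved commutator $[\Lambda^\alpha,h_\varepsilon^2]$ (note $h_\varepsilon^2\sim1+|x|$ is not uniformly H\"older). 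The only new commutator integral, $\int\Lambda^\alpha\vartriangle_k^h V:[\Lambda^\alpha,h_\varepsilon](h_\varepsilon\vartriangle_k^h V)\,\mathrm{d}x$, carries an unweighted factor $\Lambda^\alpha\vartriangle_k^h V$ that I bound by $\|V\|_{\dot H^{1+\alpha}}\le C(U_0)$, leaving a $\|h_\varepsilon\vartriangle_k^h V\|_{L^2}$ to be absorbed into the lower-order coercive term. The scaling part $\tfrac1{2\alpha}x\cdot\nabla V-\tfrac{2\alpha-1}{2\alpha}V$ is integrated by parts exactly as in \eqref{eq-W-1} and in the computation preceding \eqref{eq-w-h}: the divergence $\mathrm{div}(x\,h_\varepsilon^2)$ reproduces the positive constant $\tfrac{5-4\alpha}{4\alpha}>0$ of \eqref{eq-w-h}, while the far-field part of $x\cdot\nabla h_\varepsilon^2$ reorganises, through the identity $|x|g_\varepsilon^2=h_\varepsilon^2-g_\varepsilon^2$, into a $g_\varepsilon^2$-weighted remainder bounded by $C\|\vartriangle_k^h V\|_{L^2}^2\le C(U_0)$.

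The forcing and pressure terms only need to be bounded by $C(U_0)$. Since $\nabla h_\varepsilon^2=\tfrac{x/|x|}{1+\varepsilon|x|^2}-\tfrac{2\varepsilon x(1+|x|)}{(1+\varepsilon|x|^2)^2}$ is bounded uniformly in $\varepsilon$, the pressure contribution is $\int\vartriangle_k^h P\,\big(\nabla h_\varepsilon^2\cdot\vartriangle_k^h V\big)\,\mathrm{d}x\le C\|\nabla P\|_{L^2}\|\nabla V\|_{L^2}\le C(U_0)$ after the pressure representation of Proposition~\ref{prop3.3}. The $U_0$-terms are controlled with the pointwise decay of Lemma~\ref{Lfreeterm}: $\sqrt{1+|\cdot|}\,\vartriangle_k^h(U_0\cdot\nabla U_0)$ and $\sqrt{1+|\cdot|}\,\vartriangle_k^h(V\cdot\nabla U_0)$ lie in $L^2$ because $\nabla(U_0\cdot\nabla U_0)\lesssim(1+|x|)^{-4\alpha}$ and $\alpha>\tfrac56$. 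The hard part will be the convection term $\int V\cdot\nabla\varphi\cdot V\,\mathrm{d}x$: with the growing weight it \emph{cannot} be treated as a pure source, since after the divergence-free reduction it contains integrals of type $\int h_\varepsilon^2\,|\nabla V|\,|\vartriangle_k^h V|^2\,\mathrm{d}x$ whose weighted factor is precisely the quantity being estimated. Here I would mimic the treatment of $K_{7_1}$ in Proposition~\ref{prop3.3}: split into $\mathbb{B}_R(0)$ and $\mathbb{B}^c_R(0)$, bound the inner part crudely by $CR\|\nabla V\|_{L^3}^3$, and on the outer part use $V\in B^{2\alpha}_{2,\infty}\hookrightarrow W^{1,3}$ so that $\|\nabla V\|_{L^3(\mathbb{B}^c_R)}\ll1$ for $R$ large, interpolating $\|h_\varepsilon\vartriangle_k^h V\|_{L^3}$ between $L^2$ and $\dot H^\alpha$ to absorb the remainder into the coercive $\|h_\varepsilon\vartriangle_k^h V\|_{\dot H^\alpha}^2$. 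This far-field absorption, which has to beat the growth of the weight, is the delicate point of the whole argument.

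Finally I would collect everything to obtain, uniformly in $\varepsilon$ and $h$, $\|h_\varepsilon\vartriangle_k^h V\|_{\dot H^\alpha}^2+\tfrac{5-4\alpha}{4\alpha}\|h_\varepsilon\vartriangle_k^h V\|_{L^2}^2\le C(U_0)$. Letting $\varepsilon\to0$ (so $h_\varepsilon\uparrow\sqrt{1+|x|}$, by Fatou) and then $h\to0$ (so $\vartriangle_k^h V\to\partial_k V$), and summing over $k$, gives $\|\sqrt{1+|\cdot|}\,\partial_k V\|_{\dot H^\alpha}\le C(U_0)$. To convert this into the stated norm I would write $\sqrt{1+|x|}\,\partial_kV=\partial_k\big(\sqrt{1+|x|}\,V\big)-\big(\partial_k\sqrt{1+|x|}\big)V$, observe that $\partial_k\sqrt{1+|x|}$ is bounded so the second term is controlled by $\|V\|_{H^\alpha_\omega}\le C(U_0)$ (Proposition~\ref{prop3.1}) via Lemma~\ref{lem-Comm}, and recombine the first terms through $\Lambda^{1+\alpha}=\sum_k R_k\Lambda^\alpha\partial_k$ using the weighted $L^2$-boundedness of the Riesz transforms, legitimate since $\langle x\rangle\in A_2$ (as already exploited in Lemma~\ref{Weighted}). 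Together with the $L^2_\omega$ bound of Proposition~\ref{prop3.1}, this yields $\big\|(1+|\cdot|^2)^{1/4}V\big\|_{H^{1+\alpha}(\R^3)}\le C(U_0)$, which is the assertion.
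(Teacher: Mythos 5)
Your proposal follows essentially the same route as the paper's proof: the same test function $-\vartriangle_k^{-h}\big(h_\varepsilon^2\vartriangle_k^h V\big)$ in the weak formulation, the same symmetric splitting of $h_\varepsilon^2$ so that every commutator involves only the uniformly Lipschitz/H\"older factor $h_\varepsilon$ and is controlled by Lemma~\ref{lem-Comm}, the same coercive constant $\tfrac{5-4\alpha}{4\alpha}$ from the scaling term, and the same near-field/far-field splitting of the convection term using the smallness of $\|\nabla V\|_{L^3(\mathbb{B}_R^c(0))}$ before passing $\varepsilon\to0$ and $h\to0$. The only cosmetic difference is that the paper converts the resulting bound on $\sqrt{1+|\cdot|}\,\Lambda^\alpha\nabla V$ into the stated norm by invoking Lemma~\ref{Weighted} directly rather than via Riesz transforms, which amounts to the same weighted commutator estimate.
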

\begin{proof}
According to Proposition \ref{prop3.3}, we easily find that $\vartriangle_k^{-h}\big(h^2_\varepsilon \vartriangle_k^{h}V\big)$ belongs to $H^1_{\omega}(\R^3)$.
By taking $\varphi=-\vartriangle_k^{-h}\big(h^2_\varepsilon \vartriangle_k^{h}V\big)$
in \eqref{eq.weak-IIII},  we obtain
\begin{align}\label{add-eq-0015}
\begin{split}
&-\int_{\mathbb{R}^3}\Lambda^\alpha V:\Lambda^\alpha\vartriangle_k^{-h}\big(h^2_\varepsilon \vartriangle_k^{h}V\big)\,\mathrm{d}x+\frac{1}{2\alpha}\int_{\mathbb{R}^3}x\cdot \nabla V\cdot\vartriangle_k^{-h}\big(h^2_\varepsilon \vartriangle_k^{h}V\big)\,\mathrm{d}x\\
&+\frac{2\alpha-1}{2\alpha}\int_{\mathbb{R}^3} V\cdot\vartriangle_k^{-h}\big(h^2_\varepsilon \vartriangle_k^{h}V\big)\,\mathrm{d}x\\
=&-\int_{\mathbb{R}^3}P\, \mathrm{div}\,\vartriangle_k^{-h}\big(h^2_\varepsilon \vartriangle_k^{h}V\big)\,\mathrm{d}x-\int_{\mathbb{R}^3}V\cdot \nabla\vartriangle_k^{-h}\big(h^2_\varepsilon \vartriangle_k^{h}V\big)\cdot V\,\mathrm{d}x\\
&+\int_{\mathbb{R}^3}U_0\cdot \nabla  V\cdot\vartriangle_k^{-h}\big(h^2_\varepsilon \vartriangle_k^{h}V\big)\,\mathrm{d}x+\int_{\mathbb{R}^3}(V+U_0)\cdot \nabla  U_0\cdot\vartriangle_k^{-h}\big(h^2_\varepsilon \vartriangle_k^{h}V\big)\,\mathrm{d}x.
\end{split}
\end{align}
We compute the term including the fractional operator as follows
\begin{align*}
&-\int_{\R^3}\Lambda^\alpha V\cdot\Lambda^\alpha\,\vartriangle_k^{-h}\Big(h^2_\varepsilon \vartriangle_k^{h}V\Big)\,\mathrm{d}x\\
=&\int_{\R^3}h_\varepsilon \Lambda^\alpha\vartriangle_k^{h}V\cdot\, \Lambda^\alpha \big(h_\varepsilon \vartriangle_k^{h}V \big)\,\mathrm{d}x-\int_{\R^3} \Lambda^\alpha\vartriangle_k^{h}V\cdot\, \big([h_\varepsilon,\Lambda^\alpha] h_\varepsilon \vartriangle_k^{h}V \big)\,\mathrm{d}x\\
=&\big\| h_\varepsilon \Lambda^\alpha \vartriangle_k^{h}V  \big\|_{L^2(\R^3)}^2-\int_{\R^3}h_\varepsilon \Lambda^\alpha\vartriangle_k^{h}V\cdot\, \big([h_\varepsilon,\Lambda^\alpha]  \vartriangle_k^{h}V  \big)\,\mathrm{d}x\\&-\int_{\R^3} \Lambda^\alpha\vartriangle_k^{h}V\cdot\, \big([h_\varepsilon,\Lambda^\alpha] h_\varepsilon \vartriangle_k^{h}V \big)\,\mathrm{d}x.
\end{align*}
The reminding terms in left side of \eqref{add-eq-0015} become
\begin{align*}
&\frac{1}{2\alpha}\int_{\mathbb{R}^3}x\cdot \nabla V\cdot\vartriangle_k^{-h}\Big(h_\varepsilon^2\vartriangle_k^{h}V\Big)\,\mathrm{d}x+\frac{2\alpha-1}{2\alpha}\int_{\mathbb{R}^3} V\cdot\vartriangle_k^{-h}\Big(\big(h_\varepsilon^2\vartriangle_k^{h}V\big)\vartriangle_k^{h}V\Big)\,\mathrm{d}x\\
=&-	\frac{1}{2\alpha}\int_{\mathbb{R}^3}\vartriangle_k^{h}\big(x\cdot \nabla V\big)\cdot (h^2_\varepsilon \vartriangle_k^{h}V)\,\mathrm{d}x-\frac{2\alpha-1}{2\alpha}\int_{\mathbb{R}^3}h^2_\varepsilon \vartriangle_k^{h}V\cdot \vartriangle_k^{h}V\,\mathrm{d}x\\
=&-	\frac{1}{2\alpha}\int_{\mathbb{R}^3}h_{\varepsilon}^2(x+h\mathbf{e}_k)\cdot \nabla\vartriangle_k^{h} V\cdot \vartriangle_k^{h}V\,\mathrm{d}x-	 \frac{1}{2\alpha}\int_{\mathbb{R}^3}h_\varepsilon^2\big(\vartriangle_k^{h}x\big)\cdot \nabla V\cdot \vartriangle_k^{h}V\,\mathrm{d}x\\
&-\frac{2\alpha-1}{2\alpha}\int_{\mathbb{R}^3}h^2_\varepsilon \vartriangle_k^{h}V\cdot \vartriangle_k^{h}V\,\mathrm{d}x\\
=&\frac{5-4\alpha}{4\alpha}\big\|h_\varepsilon \vartriangle_k^{h}  V\big\|^2_{L^2(\mathbb{R}^3)}+\frac{1}{4\alpha}\int_{\R^3}x\cdot\nabla  g_\varepsilon^2\,\vartriangle_k^{h} V\cdot \vartriangle_k^{h}V\,\mathrm{d}x
-	\frac{h}{2\alpha}\int_{\mathbb{R}^3}h_\varepsilon^2 \partial_{x_k}\vartriangle_k^{h} V\cdot \vartriangle_k^{h}V\,\mathrm{d}x\\&-	 \frac{1}{2\alpha}\int_{\mathbb{R}^3}h_\varepsilon^2 \partial_{x_k}V\cdot \vartriangle_k^{h}V\,\mathrm{d}x.
\end{align*}
This equality implies that
\begin{align*}&\frac{1}{2\alpha}\int_{\mathbb{R}^3}x\cdot \nabla V\cdot\vartriangle_k^{-h}\Big(h_\varepsilon^2\vartriangle_k^{h}V\Big)\,\mathrm{d}x+\frac{2\alpha-1}{2\alpha}\int_{\mathbb{R}^3} V\cdot\vartriangle_k^{-h}\Big(\big(h_\varepsilon^2\vartriangle_k^{h}V\big)\vartriangle_k^{h}V\Big)\,\mathrm{d}x\\
=&\frac{5-4\alpha}{4\alpha}\big\|h_\varepsilon \vartriangle_k^{h}  V\big\|^2_{L^2(\mathbb{R}^3)}+\frac{1}{4\alpha}\int_{\R^3} \frac{|x|}{1+\varepsilon|x|^2}\,\vartriangle_k^{h} V\cdot \vartriangle_k^{h}V\,\mathrm{d}x
\\&-\frac{1}{2\alpha}\int_{\R^3} \frac{\varepsilon |x|^2}{1+\varepsilon|x|^2}\,\vartriangle_k^{h} V\cdot \vartriangle_k^{h}V\,\mathrm{d}x-\frac{1}{2\alpha}\int_{\R^3} \frac{\varepsilon |x|^2|x|}{1+\varepsilon|x|^2}\,\vartriangle_k^{h} V\cdot \vartriangle_k^{h}V\,\mathrm{d}x\\
&+\frac{h}{4\alpha}\int_{\mathbb{R}^3}\partial_{x_k}h_\varepsilon^2\vartriangle_k^{h} V\cdot \vartriangle_k^{h}V\,\mathrm{d}x+\frac{1}{2\alpha}\int_{\mathbb{R}^3}h_\varepsilon^2 \partial_{x_k}V\cdot \vartriangle_k^{h}V\,\mathrm{d}x\\
\geq &\frac{1-\alpha}{\alpha}\big\|h_\varepsilon \vartriangle_k^{h}  V\big\|^2_{L^2(\mathbb{R}^3)}
-\frac{1}{2\alpha}\int_{\R^3} \frac{\varepsilon |x|^2}{1+\varepsilon|x|^2}\,\vartriangle_k^{h} V\cdot \vartriangle_k^{h}V\,\mathrm{d}x\\
&+\frac{h}{4\alpha}\int_{\mathbb{R}^3}\partial_{x_k}h_\varepsilon^2\vartriangle_k^{h} V\cdot \vartriangle_k^{h}V\,\mathrm{d}x+\frac{1}{2\alpha}\int_{\mathbb{R}^3}h_\varepsilon^2 \partial_{x_k}V\cdot \vartriangle_k^{h}V\,\mathrm{d}x.
\end{align*}
So, we have from \eqref{add-eq-0015} that
\begin{align*}
& \big\| h_\varepsilon \Lambda^\alpha \vartriangle_k^{h}V  \big\|_{L^2(\R^3)}^2+\frac{1-\alpha}{\alpha}\big\|h_\varepsilon \vartriangle_k^{h}  V\big\|^2_{L^2(\mathbb{R}^3)}\\
\leq & \int_{\R^3}h_\varepsilon \Lambda^\alpha\vartriangle_k^{h}V\cdot\, \big([h_\varepsilon,\Lambda^\alpha]  \vartriangle_k^{h}V  \big)\,\mathrm{d}x-\int_{\R^3} \Lambda^\alpha\vartriangle_k^{h}V\cdot\, \big([h_\varepsilon,\Lambda^\alpha] h_\varepsilon \vartriangle_k^{h}V \big)\,\mathrm{d}x\\&+\frac{1}{2\alpha}\int_{\R^3} \frac{\varepsilon |x|^2}{1+\varepsilon|x|^2}\,\vartriangle_k^{h} V\cdot \vartriangle_k^{h}V\,\mathrm{d}x+\frac{h}{4\alpha}\int_{\mathbb{R}^3}\partial_{x_k}h_\varepsilon^2\vartriangle_k^{h} V\cdot \vartriangle_k^{h}V\,\mathrm{d}x\\
&-\frac{1}{2\alpha}\int_{\mathbb{R}^3}h_\varepsilon^2 \partial_{x_k}V\cdot \vartriangle_k^{h}V\,\mathrm{d}x+\int_{\R^3}\nabla P\,\vartriangle_k^{-h}\big(h^2_\varepsilon \vartriangle_k^{h}V\big)\,\mathrm{d}x\\
&+\int_{\R^3}
V\cdot\nabla V\,\vartriangle_k^{-h}\big(h^2_\varepsilon \vartriangle_k^{h}V\big)\,\mathrm{d}x+\int_{\R^3}
U_0\cdot\nabla V\,\vartriangle_k^{-h}\big(h^2_\varepsilon \vartriangle_k^{h}V\big)\,\mathrm{d}x\\
&+\int_{\R^3}
V\cdot\nabla U_0\,\vartriangle_k^{-h}\big(h^2_\varepsilon \vartriangle_k^{h}V\big)\,\mathrm{d}x+\int_{\R^3}
U_0\cdot\nabla U_0\,\vartriangle_k^{-h}\big(h^2_\varepsilon \vartriangle_k^{h}V\big)\,\mathrm{d}x
\triangleq\sum_{i=1}^{10}L_i.
\end{align*}
By Lemma \ref{lem-Comm} and the H\"older inequality, we have
\begin{align*}
L_1
\leq \big\|h_\varepsilon \Lambda^\alpha\vartriangle_k^{h}V\big\|_{L^2(\R^3)}\big\|[h_\varepsilon,\Lambda^\alpha]  \vartriangle_k^{h}V\big\|_{L^2(\R^3)}
\leq &\big\|h_\varepsilon \Lambda^\alpha\vartriangle_k^{h}V\big\|_{L^2(\R^3)}\big\|\vartriangle_k^{h}V\big\|_{L^2(\R^3)}\\
\leq&C\|V\|_{\dot{H}^1(\R^3)}+\frac{1}{16}\big\|h_\varepsilon \Lambda^\alpha\vartriangle_k^{h}V\big\|^2_{L^2(\R^3)}.
\end{align*}
By the H\"older inequality, Lemma \ref{lem-Comm} and the Young inequality, we see that
\begin{align*}
L_2
\leq \big\|\Lambda^\alpha\vartriangle_k^{h}V\big\|_{L^2(\mathbb{R}^3)}\big\|[h_\varepsilon,\Lambda^\alpha] h_\varepsilon \vartriangle_k^{h}V\big\|_{L^(\R^3)}
\leq&C\|V\|_{\dot{H}^{1+\alpha}(\R^3) }\big\| h_\varepsilon \vartriangle_k^{h}V\big\|_{L^2(\R^3)}\\
\leq&C\|V\|^2_{\dot{H}^{1+\alpha}(\R^3) }+\frac{1-\alpha}{16\alpha}\big\| h_\varepsilon \vartriangle_k^{h}V\big\|^2_{L^2(\R^3)}.
\end{align*}
It is clear from $\nabla h^{2}_{\varepsilon}\in L^{\infty}(\R^{3})$ that
\begin{align*}
L_3+L_{4}\leq C\|V\|^{2}_{H^{1}(\R^{3})}
\end{align*}
Also, we have
$$
L_{5}\leq \frac{1}{2\alpha}
\big\|h_\varepsilon \vartriangle_k^{h}  V\big\|^2_{L^2(\mathbb{R}^3)}.
$$
Note that
\begin{align*}
\int_{\R^3}\nabla P\,\vartriangle_k^{-h}\big(h^2_\varepsilon \vartriangle_k^{h}V\big)\,\mathrm{d}x=&\int_{\R^3}\vartriangle_k^{h}P\,\big(\nabla h^2_\varepsilon\cdot \vartriangle_k^{h}V\big)\,\mathrm{d}x,
\end{align*}
we can deduce  by  using the equality $\nabla h^2_\varepsilon=\frac{x}{|x|(1+\varepsilon|x|^2)}-\frac{2\varepsilon x(1+|x|)}{(1+\varepsilon|x|^2)^2}$ that
\begin{align*}
L_6
\leq&2\big\|\vartriangle_k^{h}P\big\|_{L^2(\R^3)} \|\nabla h^2_\varepsilon\|_{L^\infty(\R^3)}\big\|\vartriangle_k^{h}V\big\|_{L^2(\R^3)}
\leq C  \big\|P\|_{\dot{H}^1(\R^3)}\|V\|_{\dot{H}^1(\R^3)}\\
\leq &C\big(\|V\|_{L^\infty(\R^3)}\|\nabla V\|_{L^2(\R^3)}+\|U_0\|_{L^\infty(\R^3)}\|\nabla V\|_{L^2(\R^3)}\\
&+\|V\|_{L^\infty(\R^3)}\|\nabla U_0\|_{L^2(\R^3)}+\|U_0\|_{L^\infty(\R^3)}\|\nabla U_0\|_{L^2(\R^3)}\big)\|V\|_{\dot{H}^1(\R^3)}.
\end{align*}
Since $\text{div}\,V=0,$ we observe that
\begin{align*}
L_7
=&-\int_{\R^3}
\vartriangle_k^{h}V(x+h\mathbf{e}_k)\cdot\nabla V\,\big(h^2_\varepsilon \vartriangle_k^{h}V\big)\,\mathrm{d}x
-\int_{\R^3}
V \cdot \nabla\vartriangle_k^{h} V\,\big(h^2_\varepsilon \vartriangle_k^{h}V\big)\,\mathrm{d}x\\
=&-\int_{\R^3}
\vartriangle_k^{h}V(x+h\mathbf{e}_k)\cdot\nabla V\,\big(h^2_\varepsilon \vartriangle_k^{h}V\big)\,\mathrm{d}x
-\int_{\R^3}
V \cdot \nabla h_\varepsilon \,\vartriangle_k^{h} V\,\big(h_\varepsilon \vartriangle_k^{h}V\big)\,\mathrm{d}x\\
=&L_{7_1}+L_{7_2}.
\end{align*}
By the H\"older inequality and the interpolation inequality, we get
\begin{align*}
L_{7_2}\leq&\|V\|_{L^\infty(\R^3)}\|\nabla h_\varepsilon\|_{L^\infty(\R^3)}\big\|\vartriangle_k^{h} V\big\|_{L^2(\R^3)}\big\|h_\varepsilon \vartriangle_k^{h}V\big\|_{L^2(\R^3)}\\
\leq&C \|V\|^2_{L^\infty(\R^3)}\|V\|^2_{\dot{H}^1(\R^3)}+\frac{1-\alpha}{16\alpha}\big\|h_\varepsilon \vartriangle_k^{h}V\big\|_{L^2(\R^3)}^2
\end{align*}
and
\begin{align*}
L_{7_2}=&-\int_{\R^3}\frac{h_\varepsilon(x)}{h_\varepsilon(x+h\mathbf{e}_k)}
\left(h_\varepsilon (x+h\mathbf{e}_k)\vartriangle_k^{h}V(x+h\mathbf{e}_k)\right)\cdot\nabla V\,\big(h_\varepsilon \vartriangle_k^{h}V\big)\,\mathrm{d}x\\
\leq&CR\|\nabla V\|_{L^3(\mathbb{B}_R(0))}\big\|\vartriangle_k^{h}V\big\|^2_{L^3(\R^3)}+C\|\nabla V\|_{L^3(\mathbb{B}^c_R(0))}\big\|h_\varepsilon \vartriangle_k^{h}V\big\|^2_{L^3(\R^3)}\\
\leq&CR\|\nabla V\|^3_{L^3(\R^3)}+C\|\nabla V\|_{L^3( \mathbb{B}^c_R(0))}\Big(\big\|h_\varepsilon \vartriangle_k^{h}V\big\|^2_{L^2(\R^3)}+\big\|h_\varepsilon \Lambda^\alpha \vartriangle_k^{h}V\big\|^2_{L^2(\R^3)}\Big).
\end{align*}
In the same way as leading to the estimate of $K_7$, we choose $R$ sufficient large so that
\begin{align*}
L_{7}\le& CR\| V\|^3_{\dot{W}^{1,3}(\R^3)}+\|V\|^2_{L^\infty(\R^3)}\|V\|^2_{\dot{H}^1(\R^3)}+\frac{1-\alpha}{8\alpha}\big\|h_\varepsilon \vartriangle_k^{h}V\big\|_{L^2(\R^3)}^2
+\frac{1}{16}\big\|h_\varepsilon \Lambda^\alpha\vartriangle_k^{h}V\big\|^2_{L^2(\R^3)}.
\end{align*}
From the incompressible condition $\text{div}\,V=0$, we have
\begin{align*}
L_8
=&-\int_{\R^3}
\vartriangle_k^{h}U_0(x+h\mathbf{e}_k)\cdot\nabla V\,\big(h^2_\varepsilon \vartriangle_k^{h}V\big)\,\mathrm{d}x-\int_{\R^3}
U_0 \cdot \nabla h_\varepsilon \,\vartriangle_k^{h} V\,\big(h_\varepsilon \vartriangle_k^{h}V\big)\,\mathrm{d}x\\
:=&L_{8_1}+L_{8_2}.
\end{align*}
By the H\"older inequality, we obtain that
\begin{align*}
L_{8_2}\leq&\|U_0\|_{L^\infty(\R^3)}\big\|\nabla h_\varepsilon\big\|_{L^\infty(\R^3)}\|\vartriangle_k^{h}V\|_{L^2(\R^3)}\big\|h_\varepsilon \vartriangle_k^{h}V\big\|_{L^2(\R^3)}\\
\leq &C\|U_0\|_{L^\infty(\R^3)}\|\nabla U_0\|_{L^2(\R^3)}\big\|h_\varepsilon \vartriangle_k^{h}V\big\|_{L^2(\R^3)}\\
\leq &C\|U_0\|^2_{L^\infty(\R^3)}\|\nabla U_0\|^2_{L^2(\R^3)}+\frac{1-\alpha}{16\alpha}\big\|h_\varepsilon \vartriangle_k^{h}V\big\|^2_{L^2(\R^3)}
\end{align*}
and
\begin{align*}
L_{8_1}=&-\int_{\R^3}\frac{h_\varepsilon(x)}{h_\varepsilon(x+h\mathbf{e}_k)}
\big(h_\varepsilon(x+h\mathbf{e}_k)\vartriangle_k^{h}U_0(x+h\mathbf{e}_k)\big)\cdot\nabla V\,\big(h_\varepsilon \vartriangle_k^{h}V\big)\,\mathrm{d}x\\
\leq&\big\|\nabla V\big\|_{L^3(\R^3)}\big\|h_\varepsilon \vartriangle_k^{h}U_0\big\|_{L^3(\R^3)}\big\|h_\varepsilon \vartriangle_k^{h}V\big\|_{L^3(\R^3)}\\
\leq&\big\|\nabla V\big\|_{L^3(\R^3)}\big\|h_\varepsilon \vartriangle_k^{h}U_0\big\|_{L^3(\R^3)}\big\|h_\varepsilon \vartriangle_k^{h}V\big\|^{\frac{2\alpha-1}{2\alpha}}_{L^2(\R^3)}\big\|h_\varepsilon \vartriangle_k^{h}V\big\|^{\frac{1}{2\alpha}}_{\dot{H}^\alpha(\R^3)}.
\end{align*}
These estimates help us to get
\begin{align*}
L_8\le &C\|U_0\|^2_{L^\infty(\R^3)}\|\nabla U_0\|^2_{L^2(\R^3)}+\big\|\nabla V\big\|^{2}_{L^3(\R^3)}\big\|h_\varepsilon \vartriangle_k^{h}U_0\big\|^{2}_{L^3(\R^3)}
\\&+\frac{1-\alpha}{8\alpha}\big\|h_\varepsilon \vartriangle_k^{h}V\big\|^2_{L^2(\R^3)}+\frac{1}{16}\big\| h_\varepsilon \Lambda^\alpha \vartriangle_k^{h}V  \big\|_{L^2(\R^3)}^2.
\end{align*}
Since
\begin{align*}
L_9=&-\int_{\R^3}
\vartriangle_k^{h}V(x+h\mathbf{e}_k)\cdot\nabla U_0\,\big(h^2_\varepsilon \vartriangle_k^{h}V\big)\,\mathrm{d}x-\int_{\R^3}
V(x+h\mathbf{e}_k)\cdot\nabla\vartriangle_k^{h}U_0\,\big(h^2_\varepsilon \vartriangle_k^{h}V\big)\,\mathrm{d}x,
\end{align*}
we have by the H\"older inequality that
\begin{align*}
L_9\leq&\big\|\vartriangle_k^{h}V\big\|^2_{L^2(\R^3)}\big\|h_\varepsilon^2\nabla U_0\big\|_{L^\infty(\R^3)}\|\vartriangle_k^{h}V\|_{L^2(\R^3)}\\&+\|h_\varepsilon V\|_{L^2(\R^3)}\big\|\nabla\vartriangle_k^{h}U_0\big\|_{L^2(\R^3)}\big\|h_\varepsilon \vartriangle_k^{h}V\big\|_{L^2(\R^3)}\\
\leq&\big\|V\big\|^2_{L^2_{\langle x\rangle }(\R^3)}\big\|h_\varepsilon^2\nabla U_0\big\|_{L^\infty(\R^3)}\|\nabla V\|_{L^2(\R^3)}+\big\| V\big\|_{L^2_{\langle x\rangle}(\R^3)}\big\|\nabla^2U_0\big\|_{L^2(\R^3)}\big\|h_\varepsilon \vartriangle_k^{h}V\big\|_{L^2(\R^3)}.
\end{align*}
For $L_{10}$, we see that
\begin{align*}
L_{10}=&-\int_{\R^3}
\vartriangle_k^{h}U_0(x+h\mathbf{e}_k)\cdot\nabla U_0\,\big(h^2_\varepsilon \vartriangle_k^{h}V\big)\,\mathrm{d}x
-\int_{\R^3}
U_0\cdot\nabla\vartriangle_k^{h} U_0\,\big(h^2_\varepsilon \vartriangle_k^{h}V\big)\,\mathrm{d}x.
\end{align*}
By the H\"older inequality, we obtain
\begin{align*}
L_{10}
\leq&\|\nabla U_0\|_{L^2(\R^3)}\big\|h_\varepsilon^2\nabla U_0\big\|_{L^\infty(\R^3)}\|\nabla V\|_{L^2(\R^3)}+\big\|h^2_\varepsilon U_0\big\|_{L^\infty(\R^3)}\big\|\nabla^2U_0\big\|_{L^2(\R^3)}\|\nabla V\|_{L^2(\R^3)}.
\end{align*}
Collecting all estimates for  $L_1$-$L_{10}$, we eventually obtain that
\begin{align*}
\big\| h_\varepsilon \Lambda^\alpha \vartriangle_k^{h}V  \big\|_{L^2(\R^3)}^2+\frac{1-\alpha}{\alpha}\big\|h_\varepsilon \vartriangle_k^{h}  V\big\|^2_{L^2(\mathbb{R}^3)}\leq C\left(U_0\right).
\end{align*}
Taking $h\to0$ and $\varepsilon\to0$, we immediately have
\begin{equation*}
\big\| \sqrt{1+|\cdot|} \Lambda^\alpha \nabla V  \big\|_{L^2(\R^3)}^2+\frac{1-\alpha}{\alpha}\big\| \sqrt{1+|\cdot|} \nabla  V\big\|^2_{L^2(\mathbb{R}^3)}\leq C\left(U_0\right).
\end{equation*}
Since $\sqrt{1+|x|} \sim (1+|x|^{2})^{\frac{1}{4}}, $  the above inequality allows us to conclude  that
\begin{equation}\label{E3.42}
\big\|(1+|\cdot|^{2})^{\frac{1}{4}}\Lambda^\alpha \nabla V  \big\|_{L^2(\R^3)}^2+\big\|(1+|\cdot|^{2})^{\frac{1}{4}}\nabla  V\big\|^2_{L^2(\mathbb{R}^3)}\leq C\left(U_0\right)
\end{equation}
Finally, by Lemma \ref{Weighted} we have
\begin{equation*}
	\big\|(1+|\cdot|^{2})^{\frac{1}{4}}\,V\big\|_{H^{1+\alpha}(\R^3)}<C\left(U_0\right).
	\end{equation*}
	We complete the proof of Proposition \ref{key}.
\end{proof}
\begin{remark}
Let us point out that Lemma \ref{lem-Comm} allows us to  conclude that for each $\beta\in[0,\alpha[$, $$\int_{\R^{3}}\langle x\rangle^{\frac{\beta}{2}}|V|^2(x)\,\mathrm{d}x+\int_{\R^{3}}\langle x\rangle^{\frac{\beta}{2}}|\Lambda^{1+\alpha}V|^2(x)\,\mathrm{d}x<+\infty$$
	by modifying the test function in \eqref{eq.weak} and \eqref{eq.weak-IIII}. For the sake of simplicity, we choose $\beta=\frac12$ in Proposition \ref{prop3.1} and  Proposition \ref{key}.
\end{remark}

\setcounter{equation}{0}
 \setcounter{equation}{0}
\section{Decay estimates for $V$ near infinity}

In this section, we are going to show the decay estimate  for solution $V$ under the hypotheses of Theorem \ref{T1.3}, which satisfies
\begin{align*}
\left\{
\begin{aligned}
&(-\Delta)^{\alpha}V- \frac{2\alpha-1}{2\alpha}V(x)-\frac{1}{2\alpha}x\cdot \nabla V+\nabla P=f  \\
&\textnormal{div}\,V=0,
\end{aligned}\ \right.,
\end{align*}
where
\begin{equation*} f=-U_{0}\cdot\nabla U_{0}-(U_{0}+V)\cdot\nabla V-V\cdot\nabla U_{0}.
 \end{equation*}
Based on the  regularity in the weighted Sobolev spaces  developed in Section \ref{sec-3}, we know that
	$$\big\|(1+|\cdot|^{2})^{\frac{1}{4}}\,V\big\|_{H^{1+\alpha}(\R^3)}<C\left(U_0\right).$$ Moreover, by the H\"older inequality and the embedding relation that $H^{1+\alpha}(\R^3)\subset C_{\textnormal b}(\R^3)$ for $\alpha>5/6$, we see that $f \in L^{2}(\R^{3})\cap L^{\infty}(\R^{3})$ and
 $$
 V(x)\leq C(1+|x|)^{-\frac{1}{2}}.
 $$
 But, this decay rate  is weaker than the one for the force term $U_0\cdot\nabla U_0$, whose order is $2-4\alpha$. To fill this gap,
 we need to explore  the structure of self-similar solution $V(x)=u(x,1)-e^{\Delta}u_0$.  This  structure induces us to carry it out by  studying
 the following   nonlocal Stokes system with linear singularity force
  \begin{align}\label{Elinear}
 \left\{
 \begin{aligned}
 &\partial_{t}w+(-\Delta)^{\alpha} w+\nabla p=t^{\frac{1}{\alpha}-2}\mbox{div}_{x}f(x/t^{\frac{1}{2\alpha}}) \qquad\; \mbox{in}\; \; \R^{3}\times (0,+\infty),\\
 &\mbox{div}\, w=0\ \quad\qquad\qquad\qquad\qquad\qquad\qquad\qquad \,\,\,\, \ \mbox{in}\ \ \R^{3}.
 \end{aligned} \right.
 \end{align}

\begin{proposition}\label{L4.2}
Let $\alpha>\frac{1}{2}, f\in L^{q}(\R^{3})$ with $1<q<\frac{3}{2\alpha-1}$.
Then problem \eqref{Elinear} admits a solution $w\in L_{t}^{\infty}\big(0,T;\,L_{x}^{q}(\R^{3})\big)$ for any $T<\infty$ which has the form
$$
w(x,t)=\int_{0}^{t}e^{-(-\Delta)^{\alpha}(t-s)}\mathbb{P}{\rm div}_{x}s^{\frac{1}{\alpha}-2}f(\cdot/s^{\frac{1}{2\alpha}})\,{\rm d}s.
$$
If $v(x,t)\in L_{t}^{\infty}\big(0,T;L_{x}^{q_{1}}(\R^{3})\big)$ with any $q_{1}\geq 1$ is another solution of \eqref{Elinear} such that
$$\lim_{t\to0}\|v(\cdot,t)\|_{L^{q_{1}}(\R^{3})}=0,$$
 then $v\equiv w$ in $\R^{3}\times(0,+\infty)$.
\end{proposition}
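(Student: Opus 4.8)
The plan is to read \eqref{Elinear} through Duhamel's principle, so that the candidate solution is exactly the $w$ displayed in the statement, and to reduce both existence and uniqueness to a single clean mapping estimate for the non-local Stokes semigroup $e^{-(-\Delta)^{\alpha}\tau}\mathbb{P}\,\mathrm{div}$. First I would record the bound
\begin{equation*}
\big\|e^{-(-\Delta)^{\alpha}\tau}\mathbb{P}\,\mathrm{div}\,g\big\|_{L^{q}(\R^{3})}\leq C\,\tau^{-\frac{1}{2\alpha}}\|g\|_{L^{q}(\R^{3})},\qquad 1<q<\infty,\ \tau>0.
\end{equation*}
Using the self-similarity $\Upsilon(x,\tau)=\tau^{-\frac{2}{\alpha}}\Upsilon(x/\tau^{\frac{1}{2\alpha}},1)$ from Lemma \ref{C4.1}, a change of variables reduces this to boundedness on $L^{q}$ at $\tau=1$, the factor $\tau^{-\frac{1}{2\alpha}}$ accounting for the one derivative carried by $\mathrm{div}$. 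At $\tau=1$ I would split the kernel as $\Upsilon(\cdot,1)=\mathcal{F}+\mathcal{R}$, where $\mathcal{F}$ is the homogeneous degree $-4$ tensor and $\mathcal{R}=\partial_{h}\Psi$ is the smooth remainder; by the decay in Lemma \ref{L4.1} one has $\mathcal{R}\in L^{1}(\R^{3})$, so $\mathcal{R}\ast$ is bounded on every $L^{q}$ by Young, while $\mathcal{F}\ast$ is a Calderón–Zygmund operator whose boundedness for $1<q<\infty$ is guaranteed by the cancellation \eqref{cancel}. \emph{This is the step I expect to be the main obstacle}: because $\Upsilon(\cdot,1)$ is nonintegrable near the origin (it behaves like $|x|^{-4}$), a naive Young estimate fails, and one genuinely needs the singular-integral theory built on \eqref{cancel}.

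With this estimate in hand, existence is a direct computation. Writing $g_{s}:=s^{\frac{1}{\alpha}-2}f(\cdot/s^{\frac{1}{2\alpha}})$, the scaling of the norm gives $\|g_{s}\|_{L^{q}(\R^{3})}=s^{\frac{1}{\alpha}-2+\frac{3}{2\alpha q}}\|f\|_{L^{q}(\R^{3})}$, whence
\begin{equation*}
\|w(\cdot,t)\|_{L^{q}(\R^{3})}\leq C\|f\|_{L^{q}(\R^{3})}\int_{0}^{t}(t-s)^{-\frac{1}{2\alpha}}\,s^{\frac{1}{\alpha}-2+\frac{3}{2\alpha q}}\,\dd s=C\|f\|_{L^{q}(\R^{3})}\,B\!\Big(1-\tfrac{1}{2\alpha},\,\tfrac{1}{\alpha}-1+\tfrac{3}{2\alpha q}\Big)\,t^{\,-1+\frac{1}{2\alpha}+\frac{3}{2\alpha q}}.
\end{equation*}
The Beta integral converges because $\alpha>\tfrac12$ forces $1-\tfrac{1}{2\alpha}>0$ at the endpoint $s=t$, while the exponent on $s$ exceeds $-1$ for every admissible $q$ (as $\tfrac1\alpha\geq1$). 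The resulting power of $t$, namely $-1+\tfrac{1}{2\alpha}+\tfrac{3}{2\alpha q}$, is nonnegative precisely when $q\leq\tfrac{3}{2\alpha-1}$; hence under the hypothesis $q<\tfrac{3}{2\alpha-1}$ the right-hand side is bounded on $(0,T)$ and in fact vanishes as $t\to0^{+}$, giving $w\in L^{\infty}_{t}(0,T;L^{q}_{x})$. That $w$ solves \eqref{Elinear} in the distributional sense is then routine from Duhamel's principle, and $\mathrm{div}\,w=0$ is built in through $\mathbb{P}$.

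For uniqueness I would pass to the difference $d:=v-w\in L^{\infty}_{t}\big(0,T;\,L^{q}_{x}+L^{q_{1}}_{x}\big)$, which solves the homogeneous system $\partial_{t}d+(-\Delta)^{\alpha}d+\nabla\pi=0$, $\mathrm{div}\,d=0$; applying $\mathbb{P}$ annihilates the pressure and, since $\mathbb{P}d=d$, shows that $d$ solves the fractional heat equation $\partial_{t}d+(-\Delta)^{\alpha}d=0$. For $0<\varepsilon<t_{0}<T$ both $v$ and $w$ satisfy the Duhamel identity based at time $\varepsilon$ (legitimate since the source has no singularity on $[\varepsilon,t_{0}]$), and subtracting yields $d(\cdot,t_{0})=e^{-(-\Delta)^{\alpha}(t_{0}-\varepsilon)}d(\cdot,\varepsilon)$. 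Testing against $\phi\in\mathcal S(\R^{3})$ and using the contractivity of $e^{-(-\Delta)^{\alpha}\tau}$ on each $L^{r}$ (its kernel $G^{(\alpha)}(\cdot,\tau)\geq0$ integrates to $1$) gives
\begin{equation*}
\big|\langle d(\cdot,t_{0}),\phi\rangle\big|\leq\|v(\cdot,\varepsilon)\|_{L^{q_{1}}(\R^{3})}\|\phi\|_{L^{q_{1}'}(\R^{3})}+\|w(\cdot,\varepsilon)\|_{L^{q}(\R^{3})}\|\phi\|_{L^{q'}(\R^{3})}.
\end{equation*}
Since $\|v(\cdot,\varepsilon)\|_{L^{q_{1}}}\to0$ by hypothesis and $\|w(\cdot,\varepsilon)\|_{L^{q}}\to0$ by the strict inequality $q<\tfrac{3}{2\alpha-1}$ established above, letting $\varepsilon\to0^{+}$ forces $\langle d(\cdot,t_{0}),\phi\rangle=0$ for all $\phi$, hence $d(\cdot,t_{0})\equiv0$ and $v\equiv w$. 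The only delicate point here is justifying the base-$\varepsilon$ Duhamel representation for an arbitrary admissible solution $v$ (equivalently, identifying its pressure), which is where the precise notion of solution must be invoked.
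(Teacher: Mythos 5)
Your existence argument has the right skeleton (Duhamel plus a semigroup mapping estimate plus the Beta--function computation, which correctly locates where $q<\tfrac{3}{2\alpha-1}$ enters), but the step you single out as the main obstacle is justified incorrectly. The kernel $\Upsilon(\cdot,1)=\partial_h\mathcal{O}(\cdot,1)$ is smooth on \emph{all} of $\R^3$: Lemma \ref{L4.1} gives $|D^{\ell}_x\mathcal{O}(x,1)|\leq C(1+|x|)^{-3-\ell}$, so $|\Upsilon(x,1)|\leq C(1+|x|)^{-4}$ is a decay bound at infinity, not a singularity at the origin. Hence $\Upsilon(\cdot,1)\in L^{1}(\R^3)$, and the ``naive'' Young estimate you dismiss is exactly what works; this is what the paper does, using $\|\Upsilon(\cdot,\tau)\|_{L^{1}}=\tau^{-\frac{1}{2\alpha}}\|\Upsilon(\cdot,1)\|_{L^{1}}$ by self-similarity. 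Your proposed substitute fails on both halves: $\mathcal{R}=\partial_h\Psi$ behaves like $|x|^{-4}$ \emph{near the origin} (since $\Psi=\mathcal{O}(\cdot,1)-D^2\mathbb{E}$ inherits the $|x|^{-3}$ singularity of $D^2\mathbb{E}$ there --- recall $\Psi$ is only smooth on $\R^3\setminus\{0\}$), so $\mathcal{R}\notin L^{1}(\R^3)$; and $\mathcal{F}=\partial_h\partial_k\partial_j\mathbb{E}$ is homogeneous of degree $-4$ in $\R^3$, one order more singular than a Calder\'on--Zygmund kernel, so the cancellation \eqref{cancel} does not make $\mathcal{F}\ast$ bounded on $L^{q}$ (it is a genuine derivative of a Riesz-type operator). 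The estimate you want is true, but for the simple reason, not for yours.

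The uniqueness half is where you genuinely diverge from the paper, and your version contains the gap you yourself flag. The identity $d(\cdot,t_0)=e^{-(-\Delta)^{\alpha}(t_0-\varepsilon)}d(\cdot,\varepsilon)$ is not ``legitimate because the source is regular on $[\varepsilon,t_0]$'': it is itself a uniqueness statement for the projected equation in the class $L^{\infty}_t(L^{q}_x+L^{q_1}_x)$, i.e.\ essentially what is to be proved; moreover applying $\mathbb{P}$ is problematic when $q_1=1$ (the statement allows any $q_1\geq 1$, and Riesz transforms are unbounded on $L^{1}$), and the pressure is only known to be a distribution. The paper closes exactly this gap by a different route: mollify $d$, take the curl to eliminate $\nabla p$ without ever invoking $\mathbb{P}$, observe that $\nabla\times(\eta_{\epsilon}\ast d)$ is a bounded classical solution of the fractional heat equation vanishing at $t=0$ and at spatial infinity, conclude it vanishes by the maximum principle, then use $\Delta(\eta_{\epsilon}\ast d)=\nabla(\mathrm{div}\,\cdot)-\nabla\times\nabla\times(\cdot)=0$ together with the Liouville theorem for bounded harmonic functions to get $\eta_{\epsilon}\ast d\equiv 0$, and finally let $\epsilon\to 0$. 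You would need either to adopt that argument or to supply an independent proof of the semigroup representation for an arbitrary admissible $v$.
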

\begin{proof}
First, we claim $w(x,t)\in L_{t}^{\infty}L_{x}^{q}(\R^{3}\times(0,T))$ for any $T<\infty$ and
$$\lim_{t\to0}\|w(x,t)\|_{L^{q}_{x}(\R^{3})}=0.$$
Indeed,
\begin{align*}
\|w(\cdot,t)\|_{L^{q}_{x}(\R^{3})}&\leq C\int_{0}^{t}(t-s)^{-\frac{2}{\alpha}}\|\Upsilon(\cdot/(t-s)^{\frac{1}{2\alpha}})\|_{L^{1}(\R^{3})}s^{\frac{1}{\alpha}-2}\|f(\cdot/s^{\frac{1}{2\alpha}})\|_{L^{q}(\R^{3})}\,{\rm d}s\\&
\leq C\int_{0}^{t}(t-s)^{-\frac{1}{2\alpha}}s^{\frac{1}{\alpha}-2+\frac{3}{2\alpha q}}\,{\rm d}s\\&
\leq C t^{\frac{1}{\alpha}+\frac{3}{2\alpha q}-1}\to0 \ \ \ \mbox{as}\ \  t\to0.
\end{align*}
This inequality shows that the claim is true.

Next, it is necessary to prove $w(x,t)$ is a solution of \eqref{Elinear} in the sense of distribution, {\rm i.e.},
\begin{align}\label{Elinear-1}
\langle\partial_{t}w,\theta\varphi\rangle=\langle-(-\Delta)^{\alpha} w,\theta\varphi\rangle+\langle\mathbb{P}{\rm div}_{x}s^{\frac{1}{\alpha}-2}f(\cdot/s^{\frac{1}{2\alpha}}),\theta\varphi\rangle,
\end{align}
where $\theta(t)\in C_{0}^{\infty}\big([0,T)\big)$ and $\varphi(x)\in C_{0}^{\infty}(\R^{3})$.

 We adopt the argument in \cite{Du}. Let
$w(\cdot,t)=\mathbb{P}{\rm div}_{x}t^{\frac{1}{\alpha}-2}f(\cdot/t^{\frac{1}{2\alpha}})$,  one easily verifies  that
 \begin{align*}
 |\langle w(\cdot,t),\varphi\rangle|=\left|\int_{\R^{3}}t^{\frac{1}{\alpha}-2}\mathbb{P}f(x/t^{\frac{1}{2\alpha}})\nabla \varphi\,{\rm d}x\right|
 \leq t^{\frac{1}{\alpha}-2+\frac{3}{2\alpha q}} \|f\|_{L^{q}(\R^{3})}\|\nabla\varphi\|_{L^{\frac{q}{q-1}}(\R^{3})}.
\end{align*}
This inequality implies that $w\in L_{{\rm loc}}^{\infty}(0,T; W^{-1,q}(\R^{3}))$. Besides, a simple calculation shows that
\begin{align*}
\Big\langle\partial_{t}\int_{0}^{t}e^{-(-\Delta)^{\alpha}(t-s)}w(s)\,{\rm d}s,\theta\varphi\Big\rangle
=&-\int_{0}^{T}\int_{0}^{t}\langle e^{-(-\Delta)^{\alpha}(t-s)}w(s),\partial_{t}\theta\varphi\rangle \,{\rm d}s{\rm d}t\\
=&-\int_{0}^{T}\int_{0}^{t}\langle w(s), \partial_{t}\theta e^{-(-\Delta)^{\alpha}(t-s)}\varphi\rangle \,{\rm d}s{\rm d}t\\
=&-\int_{0}^{T}\int_{0}^{t}\big\langle w(s), \partial_{t}(\theta e^{-(-\Delta)^{\alpha}(t-s)}\varphi)\\
&\qquad\qquad\quad -\theta(-\Delta)^{\alpha}e^{-(-\Delta)^{\alpha}(t-s)}\varphi\big\rangle \,{\rm d}s{\rm d}t\\
\triangleq&\;{\rm I+II.}
\end{align*}
We directly calculus to show
\begin{align*}
{\rm I}&=-\int_{0}^{T}\int_{0}^{t}\big\langle w(s), \partial_{t}(\theta e^{-(-\Delta)^{\alpha}(t-s)}\varphi)\big\rangle \,{\rm d}s{\rm d}t\\&
=-\lim_{\epsilon\to0}\int_{0}^{T}\int_{s+\epsilon}^{T}\big\langle w(s), \partial_{t}(\theta e^{-(-\Delta)^{\alpha}(t-s)}\varphi)\big\rangle \,{\rm d}t{\rm d}s\\&
=\lim_{\epsilon\to0}\int_{0}^{T}\langle w(s),\theta(s+\epsilon)e^{-(-\Delta)^{\alpha}\epsilon}\varphi\big\rangle\,{\rm d}s\\&
=\int_{0}^{T}\langle w(s),\theta(s)\varphi\rangle\,{\rm d}s=\langle w, \theta\varphi\rangle,
\end{align*}
and
\begin{align*}
{\rm II}&=-\int_{0}^{T}\int_{0}^{t}\big\langle w(s),\theta(-\Delta)^{\alpha}e^{-(-\Delta)^{\alpha}(t-s)}\varphi\big\rangle \,{\rm d}s{\rm d}t\\&
=-\int_{0}^{T}\big\langle\int_{0}^{t}e^{-(-\Delta)^{\alpha}(t-s)}w(s)\,{\rm d}s, (-\Delta)^{\alpha}\varphi\big\rangle\theta\,{\rm d}t\\&
=-\big\langle(-\Delta)^{\alpha}\int_{0}^{t}e^{-(-\Delta)^{\alpha}(t-s)}w(s)\,{\rm d}s, \varphi\theta\big\rangle.
\end{align*}
From the above discussion, we obtain \eqref{Elinear-1}.

Finally, we prove the uniqueness.   Let $\theta =w-v$, then
\begin{align}\label{Elinear-2}
\theta \in L_{t}^{\infty}(L_{x}^{q}+L_{x}^{q_{1}})(\R^{3}\times(0,T))
\end{align}
for any $T<\infty$, fulfills
 \begin{align*}
\left\{
\begin{aligned}
&\partial_{t}\theta+(-\Delta)^{\alpha} \theta+\nabla p=0,\;\; \ \ \mbox{in}\ \ \R^{3}\times (0,T), \\
 &\mbox{div}\, \theta=0\\
\end{aligned} \right.
\end{align*}
for some distribution $p$, and $\lim_{t\to0}\|\theta(\cdot,t)\|_{(L_{x}^{q_{1}}+L_{x}^{q})(\R^{3})}=0$. Let $\eta_{\epsilon}(x)$ be the standard mollifier, then
\begin{align*}
\left\{
\begin{aligned}
&\partial_{t}\eta_{\epsilon}\ast \theta+(-\Delta)^{\alpha}\eta_{\epsilon}\ast \theta+\nabla \eta_{\epsilon}\ast p=0, \;\; \mbox{in}\ \ \R^{3}\times (0,T), \\
 &\mbox{div}\, \eta_{\epsilon}\ast \theta=0\\
\end{aligned} \right.
\end{align*}
with any $T<\infty$. Let  $\theta_{\epsilon}(x,t)=\eta_{\epsilon}\ast w(x,t)$, then $$\theta_{\epsilon}(x,t)\in C^{\infty}(\R^{3})\;\;\text{and}\;\;\lim_{|x|\to\infty} \theta_{\epsilon}(x,t)=0$$
for all $t\geq0$. Denote by
$\tilde{\theta}_{\epsilon}=\nabla\times \theta_{\epsilon}$, then $\tilde{\theta}_{\epsilon}$ is solution of
\begin{align*}
\left\{\begin{aligned}
&\partial_{t}\tilde{\theta}_{\epsilon}+(-\Delta)^{\alpha}\tilde{\theta}_{\epsilon}=0, \;\ \ \mbox{in}\ \ \R^{3}\times (0,T),\\
 &\tilde{\theta}_{\epsilon}(x,0)=0\\
\end{aligned} \right.
\end{align*}
with any $T<\infty$. From the fact $\lim_{|x|\to\infty}\tilde{\theta}_{\epsilon}=0$ and maximum principle of the fractional heat operator, we conclude $\tilde{\theta}_{\epsilon}\equiv0$ for $(x,t)\in\R^{3}\times(0,+\infty)$. From the  identity
$$
\nabla\times(\nabla\times \theta)=\nabla({\rm div} \theta)-\Delta \theta, \ \ \mbox{for all}\ \theta\in C^{\infty}(\R^{3}),
$$
we immediately derive $\Delta \theta_{\epsilon}(x,t)=0$ in $\R^{3}\times(0,+\infty)$. The Liouville theorem of harmonic functions, together  with $\|\theta_{\epsilon}(\cdot,t)\|_{L^{\infty}(\R^{3})}< +\infty$ for
any $t\in (0,+\infty)$, implies $\theta_{\epsilon}=0$ in $\R^{3}\times(0,+\infty)$. Since for a.e. $t\in (0,+\infty)$
$$
\|\theta_{\epsilon}(\cdot,t)-\theta\|_{L^{1}_{{\rm loc}}(\R^{3})}\to0\ \ \ \mbox{as}\ \ \epsilon\to0
$$
we deduce that $\theta(x,t)\equiv0$ for $(x,t)\in \R^{3}\times(0,+\infty)$.
\end{proof}
Borrowing this proposition and $V\in H^{1+\alpha}_{\langle x\rangle}(\R^3)$, we can write $V(x)$ in the following form
\begin{equation}\label{eq.fundamental}
V(x)=v(x,1)=\int_{0}^{1}e^{-(-\Delta)^{\alpha}(1-s)}\mathbb{P}{\rm div}_{x}s^{\frac{1}{\alpha}-2}f(\cdot/s^{\frac{1}{2\alpha}})\,{\rm d}s,
\end{equation}
where \[f(x)=-\Big(V\otimes V+U_{0}\otimes V+V\otimes U_{0}+U_{0}\otimes U_{0}\Big)(x)\in L^{2}(\R^{3})\cap L^{\infty}(\R^{3}),\]
satisfying
$$
|f(x)|\leq C(1+|x|)^{-1}\ \  \mbox{for}\ \quad  x\in\R^{3}.
$$
With expressions of solution \eqref{eq.fundamental} in hand, we will improve the order of decay estimate  step by step in terms of the decay properties of Oseen kernel.

 \begin{proposition}\label{L4.3}
Let $(V,P)$ be the solution of problem \eqref{E} satisfying
$V\in H^\alpha(\mathbb{R}^3)$
with $5/6<\alpha\leq1$. Then we have for $x\in \R^{3}$
$$
|V(x)|\leq C(1+|x|)^{2-4\alpha}.
$$
\end{proposition}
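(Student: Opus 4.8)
The plan is to run a bootstrap on the pointwise decay of $V$, feeding the improved decay of $V$ back into the quadratic force $f$ and re-estimating through the representation formula \eqref{eq.fundamental}. We already know from Theorem~\ref{thm1.2} that $|V(x)|\le C(1+|x|)^{-1/2}$, and this is the starting point of the iteration; the target exponent $4\alpha-2$ is exactly the decay of the dominant self-interaction term $U_0\otimes U_0$, since $|U_0|\le C(1+|x|)^{1-2\alpha}$ by Lemma~\ref{Lfreeterm}.

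The core of the argument is a single convolution estimate for $e^{-(-\Delta)^\alpha(1-s)}\mathbb{P}\,\mathrm{div}_x$, whose kernel is $\Upsilon(\cdot,1-s)$. By the self-similar structure of $\Upsilon$ and Lemma~\ref{C4.1} I would use
$$|\Upsilon(x-y,1-s)|\le C\big((1-s)^{\frac1{2\alpha}}+|x-y|\big)^{-4}.$$
I claim that if $|f(z)|\le C(1+|z|)^{-\gamma}$ for some $0<\gamma<3$, then \eqref{eq.fundamental} yields $|V(x)|\le C(1+|x|)^{-\gamma}$. To see this, rescale so that $|f(y/s^{\frac1{2\alpha}})|\le C s^{\frac{\gamma}{2\alpha}}(s^{\frac1{2\alpha}}+|y|)^{-\gamma}$, collect the resulting power $s^{\frac1\alpha-2+\frac{\gamma}{2\alpha}}$, and split the $y$-integral into the zones $|y|<|x|/2$, $|x|/2<|y|<2|x|$, $|y|>2|x|$. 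In the near-origin and far zones one exploits $|x-y|\sim|x|$ or $|y|\sim|x|$ to obtain contributions $\lesssim(1+|x|)^{-1-\gamma}$; the decisive contribution is the middle zone, where $(s^{\frac1{2\alpha}}+|y|)^{-\gamma}\sim(1+|x|)^{-\gamma}$ and $\int_{\R^3}\big((1-s)^{\frac1{2\alpha}}+|x-y|\big)^{-4}\,\mathrm{d}y\le C(1-s)^{-\frac1{2\alpha}}$, producing $(1+|x|)^{-\gamma}(1-s)^{-\frac1{2\alpha}}$ after the space integration. Integrating in $s$ then gives $|V(x)|\le C(1+|x|)^{-\gamma}$ provided the two endpoint integrabilities hold.

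With this estimate in hand the bootstrap is purely arithmetic. Writing $|V|\le C(1+|x|)^{-\delta}$, the four quadratic terms of $f=-(V\otimes V+U_0\otimes V+V\otimes U_0+U_0\otimes U_0)$ satisfy $|f|\le C(1+|x|)^{-\gamma}$ with $\gamma=\min\{2\delta,\,(2\alpha-1)+\delta,\,4\alpha-2\}$, so the convolution estimate upgrades the exponent $\delta$ to this same $\gamma$. Starting from $\delta_0=1/2$ gives $\delta_1=\min\{1,\,2\alpha-\tfrac12,\,4\alpha-2\}=1$, where $\alpha>5/6$ guarantees $2\alpha-\tfrac12>1$ and $4\alpha-2>1$; a second step gives $\delta_2=\min\{2,\,2\alpha,\,4\alpha-2\}=4\alpha-2$, since $4\alpha-2\le2\alpha\le2$ for $\alpha\le1$. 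This fixed point $\gamma=4\alpha-2$ yields the asserted bound $|V(x)|\le C(1+|x|)^{2-4\alpha}$.

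The main technical obstacle is the middle-zone estimate combined with the time integration: one must absorb the apparently non-integrable singularity of the kernel at $y=x$ into the regularizing factor $(1-s)^{\frac1{2\alpha}}$, and then verify that $\int_0^1 s^{\frac1\alpha-2+\frac{\gamma}{2\alpha}}(1-s)^{-\frac1{2\alpha}}\,\mathrm{d}s$ converges at both endpoints for all $\alpha\in(5/6,1]$ and every $\gamma$ produced by the iteration. Convergence at $s=1$ needs $\frac1{2\alpha}<1$, i.e. $\alpha>1/2$, while convergence at $s=0$ needs $\gamma>2\alpha-2$, which is automatic; this is where the hypotheses on $\alpha$ are genuinely used (the lower bound $\alpha>5/6$ entering already through the earlier regularity theory ensuring $f\in L^2\cap L^\infty$ and the starting decay). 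The remaining near- and far-zone terms and the bookkeeping of the self-similar powers of $s$ are routine.
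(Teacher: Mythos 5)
Your proposal is correct and follows essentially the same route as the paper: both use the representation formula \eqref{eq.fundamental}, the kernel bound $|\Upsilon(x,t)|\le C(t^{\frac{1}{2\alpha}}+|x|)^{-4}$, the three-zone splitting in $y$, and a two-step bootstrap on the pointwise decay exponent ($-\tfrac12\to-1\to 2-4\alpha$). The only cosmetic difference is that you package the two steps into one general convolution lemma, whereas the paper carries out the two iterations explicitly.
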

\begin{proof}  Step 1. We claim $|V(x)\leq C(1+|x|)^{-1}.$

By a direct calculation,
 we get  from Proposition \ref{L4.2} that
\begin{align*}
|V(x)|=&\left|\int_{0}^{1}e^{-(-\Delta)^{\alpha}(1-s)}\mathbb{P}{\rm div}_{x}s^{\frac{1}{\alpha}-2}f(\cdot/s^{\frac{1}{2\alpha}})\,{\rm d}s\right|\\
=&\left|\int_{0}^{1}\int_{\R^{3}}\Upsilon(1-s,x-y)s^{\frac{1}{\alpha}-2}f(y/s^{\frac{1}{2\alpha}})\,{\rm d}y\,{\rm d}s\right|\\
\leq& \int_{0}^{1}\int_{\R^{3}}\Big\{(1-s)^{\frac{1}{2\alpha}}+|x-y|\Big\}^{-4}(s^{\frac{1}{2\alpha}}+|y|)^{-1}s^{\frac{3}{2\alpha}-2}\,{\rm d}y{\rm d}s\\
=&\int_{0}^{1}\int_{|y|\geq 2|x|}\cdots\,{\rm d}y{\rm d}s+\int_{0}^{1}\int_{|x|/2<|y|<2|x|}\cdots\,{\rm d}y{\rm d}s+\int_{0}^{1}\int_{|y|\leq|x|/2}\cdots\,{\rm d}y{\rm d}s\\
=&{\rm I+II+III}.
\end{align*}
In the following, we always suppose $|x|\gg 1$.
For I, we have
$$
{\rm I}\leq C |x|^{-1}\int_{0}^{1}\int_{|x|}^{\infty}r^{-2}s^{\frac{3}{2\alpha}-2}\,{\rm d} r{\rm d}s\leq C |x|^{-2}.
$$
Here we have used the fact $\alpha>5/6$. For II,
\begin{align*}
{\rm II}&\leq C |x|^{-1}\int_{0}^{1}\int_{|y|\leq 3|x|}((1-s)^{\frac{1}{2\alpha}}+|y|)^{-4}s^{\frac{3}{2\alpha}-2}\,{\rm d}y{\rm d}s\\&
= C |x|^{-1}\int_{0}^{1/2}\int_{|y|\leq 3|x|}\cdots\,{\rm d}y{\rm d}s+ C |x|^{-1}\int_{1/2}^{1}\int_{|y|\leq 3|x|}\cdots\,{\rm d}y{\rm d}s\\&
={\rm II_{1}+II_{2}}
\end{align*}
Obviously,
$$
 {\rm II_{1}}\leq C|x|^{-1}\int_{0}^{1/2}\int_{|y|\leq 3|x|}(1+|y|)^{-4}s^{\frac{3}{2\alpha}-2}\,{\rm d}y{\rm d}s\leq  C|x|^{-1},
$$
and
\begin{align*}
{\rm II_{2}}&\leq C|x|^{-1}\int_{0}^{1/2}\int_{|y|\leq 3|x|}(s^{\frac{1}{2\alpha}}+|y|)^{-4}\,{\rm d}y{\rm d}s\\&
\leq C|x|^{-1}\int_{0}^{1/2}\int_{0}^{3|x|}(s^{\frac{1}{2\alpha}}+r)^{-2}\,{\rm d}r{\rm d}s\leq C|x|^{-1}.\qquad\quad
\end{align*}
Now we consider III,
\begin{align*}
{\rm III}\leq C|x|^{-4}\int_{0}^{1}\int_{0}^{|x|}rs^{\frac{3}{2\alpha}-2}\,{\rm d}r{\rm d}s\leq C|x|^{-2}.
\end{align*}
Collecting the estimates of I-III, we obtain $V(x)\in L^{\infty}(\R^{3})$.
This helps  us to get $V(x)\leq C(1+|x|)^{-1}$.

Step 2. Due to the fact $2\alpha-1\leq1$ and $|U_{0}(x)|\leq C(1+|x|)^{1-2\alpha}$, we derive
$$
|f(x)|=\Big|\Big(V\otimes V+U_{0}\otimes V+V\otimes U_{0}+U_{0}\otimes U_{0}\Big)(x)\Big|\leq C (1+|x|)^{2-4\alpha}
$$
and furthermore,
$$
|s^{\frac{1}{\alpha}-2}f(y/s^{\frac{1}{2\alpha}})|\leq C(s^{\frac{1}{2\alpha}}+|x|)^{2-4\alpha}.
$$
This inequality helps us to obtain
\begin{align*}
|V(x)|&\leq C\int_{0}^{1}\int_{\R^{3}}\Big\{(1-s)^{\frac{1}{2\alpha}}+|x-y|\Big\}^{-4}(s^{\frac{1}{2\alpha}}+|x|)^{2-4\alpha}\,{\rm d}y\,{\rm d}s\\&
=\int_{0}^{1}\int_{|y|\geq 2|x|}\cdots\,{\rm d}y{\rm d}s+\int_{0}^{1}\int_{|x|/2<|y|<2|x|}\cdots\,{\rm d}y{\rm d}s+\int_{0}^{1}\int_{|y|\leq|x|/2}\cdots\,{\rm d}y{\rm d}s\\&
={\rm J_{1}+J_{2}+J_{3}}.
\end{align*}
As the similar way as in Step 1, we obtain
$$|{\rm J_{1}}|+|{J_{3}}|\leq C(1+|x|)^{1-4\alpha},\;\;\text{and}\;\; \;|{\rm J_{2}}|\leq C(1+|x|)^{2-4\alpha}.$$
Therefore, we finally obtain
$$
|V(x)|\leq C(1+|x|)^{2-4\alpha},
$$
and then we finish the proof of Proposition \ref{L4.3}.
\end{proof}

In next two Propositions, we will prove that $V$ achieves its optimal decay for $\alpha=1$. To do this, we have to prove $D^{2}V$ possesses some decay condition at infinity which plays a key role in our proof.

\begin{proposition}\label{prop-4.2}
Let $\alpha=1$, then for small $0<\delta\ll1$
$$
|\nabla V(x)|\leq C(1+|x|)^{-3}\log (2+|x|),\ \ |\nabla^{2} V(x)|\leq C(1+|x|)^{-3+\delta}.
$$
\end{proposition}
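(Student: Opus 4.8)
The plan is to extract both derivative bounds from the fundamental representation \eqref{eq.fundamental}, specialized to $\alpha=1$, combined with the fine structure of the non-local Oseen kernel recorded in Lemmas \ref{L4.1} and \ref{C4.1}. I would work in divergence form, so that the density
\[
f=-\big(V\otimes V+U_{0}\otimes V+V\otimes U_{0}+U_{0}\otimes U_{0}\big)
\]
contains no derivative of $V$: by Proposition \ref{L4.3} and Lemma \ref{Lfreeterm} it satisfies $|f(x)|\le C(1+|x|)^{-2}$, and it is locally H\"older continuous since $V\in H^{1+\alpha}\hookrightarrow C^{0,1/2}$ for $\alpha=1$ while $U_{0}\in C^{\infty}$. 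First I would differentiate \eqref{eq.fundamental} under the integral sign, placing the gradient on the kernel $\Upsilon=\nabla\mathcal{O}$ and using $|D_{x}^{\ell}\Upsilon(z,\tau)|\le C(\sqrt{\tau}+|z|)^{-4-\ell}$ from Lemma \ref{C4.1}, together with the splitting $\Upsilon=\mathcal{F}+(\text{Gaussian remainder})$.

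For $\nabla V$ I would split the $y$-integral into $\{|y|<|x|/2\}$, $\{|x|/2<|y|<2|x|\}$ and $\{|y|>2|x|\}$, as in Proposition \ref{L4.3}. On the two outer regions $|x-y|\gtrsim|x|$, the kernel is non-singular and a direct estimate using its decay and $|f|\le C(1+|x|)^{-2}$ gives a rate no worse than $(1+|x|)^{-3}$ (in fact $(1+|x|)^{-4}$). The difficulty lives in the diagonal region $\{|x|/2<|y|<2|x|\}$, where the leading part $\nabla\mathcal{F}$ is homogeneous of degree $-5$, hence non-integrable near $z=0$, and the time integral concentrates as $s\to1$. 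To absorb this I would invoke the cancellation identities \eqref{cancel}, which let me subtract a Taylor polynomial of the density at $y=x$, trading the singularity against the decaying local H\"older modulus of $f$ supplied by the weighted regularity of Section \ref{sec-3}; the residual time integral, essentially $\int_{0}^{1}(1-s)^{-1}\,\mathrm{d}s$ truncated at a scale set by $|x|$, is precisely what generates the factor $\log(2+|x|)$.

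For $\nabla^{2}V$ the same representation produces the kernel $\nabla^{2}\mathcal{F}$ of degree $-6$, which is too singular for the pointwise argument to close at the borderline rate; I would instead obtain the bound from weighted Sobolev regularity, as indicated in the introduction. Concretely I would advance the bootstrap of Section \ref{sec-3} one order: differentiating \eqref{E} and testing \eqref{eq.weak-IIII} against a weighted second difference quotient of the kind used in Proposition \ref{key} and its Remark, now feeding in the decay of $V$, the bound on $\nabla V$ just established, and $|\nabla^{2}U_{0}|\le C(1+|x|)^{-3}$ from Lemma \ref{Lfreeterm} (valid since $\sigma\in C^{1,1}(\mathbb{S}^{2})$). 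This should give $\big\|\langle\cdot\rangle^{\theta}\nabla^{2}V\big\|_{H^{s}}\le C(U_{0})$ for some $s>3/2$ and $\theta$ as close to $3$ as desired; localizing and applying the embedding $H^{s}(\R^{3})\hookrightarrow L^{\infty}(\R^{3})$ on unit balls then converts this $L^{2}$ weight into the pointwise rate $|\nabla^{2}V(x)|\le C(1+|x|)^{-3+\delta}$, the loss $\delta>0$ accounting for the endpoint of the embedding and the variation of the weight across a unit ball.

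The hard part will be the non-integrable diagonal singularity of $\nabla\mathcal{F}$ and $\nabla^{2}\mathcal{F}$, aggravated by the short-time ($s\to1$) concentration of the parabolic kernel. Taming it requires genuine use of the Oseen-tensor cancellation \eqref{cancel} and of the quantitative weighted regularity developed in Section \ref{sec-3}, and it is exactly the borderline nature of these estimates that forces the logarithm in the bound for $\nabla V$ and the unavoidable $\delta$-loss in the bound for $\nabla^{2}V$.
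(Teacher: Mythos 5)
Your strategy departs from the paper's in both halves, and in both places there is a genuine gap. For the gradient bound, the paper does not use the divergence-form density $f$ at all: it writes $\nabla V=\int_0^1\int_{\R^3}\nabla\mathcal{O}(x-y,1-s)\,s^{-3/2}\hat f(y/\sqrt s)\,{\rm d}y\,{\rm d}s$ with the non-divergence density $\hat f=-(V\otimes\nabla V+\nabla U_0\otimes V+\nabla V\otimes U_0+\nabla U_0\otimes U_0)$, which already satisfies $|\hat f|\le C(1+|x|)^{-3}$ because the prior bound $|\nabla V|\le C(1+|x|)^{-2}$ is imported from \cite{LXZ}; the kernel is then only of order $-4$, and the same elementary potential estimate as in Proposition \ref{L4.3} produces the logarithm with no cancellation at all. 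Your route, with the kernel $\nabla\Upsilon$ of order $-5$ and a Taylor subtraction on the diagonal, needs the local H\"older seminorm of $f$ on balls around $x$ to decay essentially like $(1+|x|)^{-3}$. The regularity of Section \ref{sec-3} supplies only a $\langle x\rangle^{1/2}$-weighted $L^2$ bound and a global $C^{1/2}$ bound on $V$; combined with $|V|\lesssim(1+|x|)^{-2}$ and $|U_0|\lesssim(1+|x|)^{-1}$ this gives a local seminorm for $f$ decaying no better than $(1+|x|)^{-1}$ (already the $U_0\otimes V$ term), so the diagonal region does not close at the claimed rate. Reaching $-3$ requires pointwise decay of $\nabla V$, the very quantity being estimated, so the argument as written is circular; it can be repaired by a further bootstrap or by citing the bound from \cite{LXZ}, but then the cancellation machinery is superfluous for this half --- the paper reserves \eqref{cancel} for removing the logarithm from $V$ itself in Proposition \ref{prop-4.3}.

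For $\nabla^2V$ the gap is the unsupported claim that the Section \ref{sec-3} estimates can be pushed to a weight $\langle x\rangle^{\theta}$ with $\theta$ arbitrarily close to $3$. A single weighted energy estimate at that strength cannot close: the forcing $U_0\cdot\nabla U_0\sim(1+|x|)^{-3}$ fails to belong to $L^2(\langle x\rangle^{6}\,{\rm d}x)$, so the right-hand side of such an estimate is already infinite. The paper instead uses the energy method only to gain a single power of the weight --- setting $E_k=|x|\partial_{kk}V$, testing against $|x|^2(1+\epsilon|x|^2)^{-3/2}E_k$ to show $x\cdot\nabla E_k\in L^2(\R^3)$ and hence $E_k\in H^2(\R^3)\hookrightarrow L^\infty(\R^3)$, i.e.\ $|\nabla^2V|\le C(1+|x|)^{-1}$ --- and then climbs from $-1$ to $-2$ to $-3+\delta$ by a pointwise bootstrap through the Oseen representation applied to the differentiated equation for $V_k=\partial_kV$, whose forcing $g=\partial_k\hat f+V_k$ improves its decay at each pass (the $\delta$ is exactly the logarithmic loss in the bound for $V_k$). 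Some such pointwise iteration appears unavoidable; a purely weighted-Sobolev route would have to confront the divergence just noted.
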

\begin{proof}
From \cite{LXZ}, we know
\begin{equation}\label{E4.6}
|x|\nabla V\in H^{2}(\R^{3}),\ \ |x|\nabla P\in H^{1}(\R^{3}),\ \ \mbox{and}\  |\nabla V|\leq C(1+|x|)^{-2}.
\end{equation}
 This, together $|V(x)|\leq C(1+|x|)^{-2}$, implies
$$
|\hat{f}(x)|\leq C(1+|x|)^{-3},
$$
where
\begin{equation}\label{E4.7}
\hat{f}(x)=-(V\otimes\nabla V+\nabla U_{0}\otimes V+\nabla V\otimes U_{0}+\nabla U_{0}\otimes U_{0}).
\end{equation}

On the other hand,  from Proposition  \ref{L4.2} one derives for $|x|>1$
\begin{align*}
|\nabla V|&=\left|\int_{0}^{1}\int_{\R^{3}}\nabla\mathcal{O}(x-y,1-s)s^{-\frac{3}{2}}\hat{f}\left(\frac{y}{\sqrt{s}}\right)\,{\rm d}y\,{\rm d}s\right|\\&
\leq C\left|\int_{0}^{1}\int_{\R^{3}}(|x-y|+\sqrt{1-s})^{-4}(|y|+\sqrt{s})^{-3}\,{\rm d}y\,{\rm d}s\right|.
\end{align*}
Using the same argument as in Proposition \ref{L4.3}, we have $|\nabla V|\leq C(1+|x|)^{-3}\log(2+|x|)$.

Secondly, we prove
 $$
 |\nabla^{2} V(x)|\leq C(1+|x|)^{-3+\delta}.
 $$
For this purpose, we  decompose our proof into three steps.

Step 1. $|\nabla^{2} V(x)|\leq C(1+|x|)^{-1}$. Now let
$$
E_{k}=|x|\partial_{kk}V,\quad\; P_{k}=|x|\partial_{kk}P.
$$
From \eqref{E4.6}, we obtain that
$$
E_{k}\in H^{1}(\R^{3}), \quad\;P_{k}\in L^{2}(\R^{3}).
$$
A simple calculation gives
\begin{align*}
-|x|\Delta\partial_{kk}V&=\frac{1}{2}x\cdot\nabla E_{k}+\nabla(|x|\partial_{k}V_{k})-\frac{x}{|x|}\cdot\partial_{k}V-\nabla P_{k}+\frac{x}{|x|}\partial_{kk}P\\&
-E_{k}\nabla V-V\nabla E_{k}+V\frac{x}{|x|}\cdot \partial_{kk}V.
\end{align*}
 Besides, by the Hardy inequality, $\frac{\partial_{kk}V}{|x|}\in L^{2}(\R^{3})$. Thus, $E_{k}$ fulfills the following equation in weak sense
\begin{align*}
-\Delta E_{k}-\frac{x}{2}\cdot\nabla E_{k}+\nabla P_{k}=&\nabla(|x|\partial_{k}V)-\frac{x}{|x|}\cdot\partial_{k}V+\frac{x}{|x|}\partial_{kk}P
-E_{k}\nabla V\\&-V\nabla E_{k}+V\frac{x}{|x|}\cdot \partial_{kk}V-2\frac{x}{|x|}\cdot\nabla\partial_{k}V-2\frac{\partial_{kk}V}{|x|}\\
\triangleq&\nabla(|x|\partial_{k}V)+\tilde{f}(x),
\end{align*}
i.e., for any $\varphi\in H^{1}(\R^{3})$ with $\||\cdot|\varphi\|_{L^{2}(\R^{3})}<\infty$
\begin{align}\label{E-W}
\int_{\R^{3}}\nabla E_{k}\nabla\varphi\,{\rm d}x-\frac{1}{2}\int_{\R^{3}}x\cdot\nabla E_{k}\varphi\,{\rm d}x-
\int_{\R^{3}}P_{k}{\rm div} \varphi \,{\rm d}x
=\int_{\R^{3}}(\nabla(|x|\partial_{k}V)\varphi+\tilde{f})\varphi\,{\rm d}x.
\end{align}
Here, we have used the fact
$$
\nabla(|x|\partial_{k}V),\ \  \tilde{f},\ \  |\cdot|\tilde{f}\in L^{2}(\R^{3}).
$$
Now, we need to prove $x\cdot\nabla E_{k}\in L^{2}(\R^{3})$. To this end, choosing
$$
\varphi(x)=\frac{|x|^{2}}{(1+\epsilon|x|^{2})^{\frac{3}{2}}}E_{k}(x)\triangleq h^{2}_{\epsilon}(x)E_{k}(x)
$$
in \eqref{E-W}, and letting $W_{\epsilon}(x)=h_{\epsilon}(x)E_{k}$, we have
\begin{align*}
&\int_{\R^{3}}\nabla E_{k}\nabla(h_{\epsilon}W_{\epsilon})\,{\rm d}x-\frac{1}{2}\int_{\R^{3}}x\cdot(h_{\epsilon}W_{\epsilon})\cdot\nabla E_{k}\,{\rm d}x\\
=&\int_{\R^{3}}P_{k}{\rm div}(h_{\epsilon}W_{\epsilon})\,{\rm d}x+\int_{\R^{3}}|x|\partial_{k}V_{k}\cdot{\rm div}(h_{\epsilon}W_{\epsilon})\,{\rm d}x
+\int_{\R^{3}}\hat{f}h_{\epsilon}W_{\epsilon}\,{\rm d}x.
\end{align*}
A simple calculation gives
\begin{align*}
\|\nabla W_{\epsilon}\|^{2}_{L^{2}(\R^{3})}+\frac{5}{4}\| W_{\epsilon}\|^{2}_{L^{2}(\R^{3})}
=&\int_{\R^{3}}P_{k}{\rm div}(h_{\epsilon}W_{\epsilon})\,{\rm d}x+\int_{\R^{3}}\hat{f}h_{\epsilon}W_{\epsilon}\,{\rm d}x
\\&-\int_{\R^{3}}\nabla E_{k}(\nabla h_{\epsilon}\otimes W_{\epsilon})\,{\rm d}x
+\int_{\R^{3}}\nabla W_{\epsilon}(\nabla h_{\epsilon}\otimes E_{k})\,{\rm d}x\\&+
\int_{\R^{3}}|x|\partial_{k}V_{k}{\rm div} (\nabla h_{\epsilon}W_{\epsilon})\,{\rm d}x.
\end{align*}
By a routine  calculation, we have
\begin{align*}
&\Big|\int_{\R^{3}}P_{k}{\rm div}(h_{\epsilon}W_{\epsilon})\,{\rm d}x\Big|\leq C \|P_{k}\|^{2}_{L^{2}(\R^{3})}+\frac{1}{4}\| W_{\epsilon}\|^{2}_{L^{2}(\R^{3})},\\
&\Big|\int_{\R^{3}}\nabla E_{k}(\nabla h_{\epsilon}\otimes W_{\epsilon})\,{\rm d}x\Big|\leq C \|\nabla E_{k}\|^{2}_{L^{2}(\R^{3})}+\frac{1}{4}\| W_{\epsilon}\|^{2}_{L^{2}(\R^{3})},\\
&\Big|\int_{\R^{3}}\nabla W_{\epsilon}(\nabla h_{\epsilon}\otimes V)\,{\rm d}x\Big|\leq C \| E_{k}\|^{2}_{L^{2}(\R^{3})}+\frac{1}{4}\|\nabla W_{\epsilon}\|^{2}_{L^{2}(\R^{3})},
\end{align*}
and
$$
\Big|\int_{\R^{3}}\tilde{f}h_{\epsilon}W_{\epsilon}\,{\rm d}x\Big|\leq C \|\tilde{f}h_{\epsilon}\|^{2}_{L^{2}(\R^{3})}+\frac{1}{4}\| W_{\epsilon}\|^{2}_{L^{2}(\R^{3})}.
$$
Thus, we have
\begin{align*}
\|\nabla W_{\epsilon}\|^{2}_{L^{2}(\R^{3})}+\| W_{\epsilon}\|^{2}_{L^{2}(\R^{3})}\leq C\left(\|P_{k}\|^{2}_{L^{2}(\R^{3})}+\| E_{k}\|^{2}_{H^{1}(\R^{3})}
+\||\cdot|\tilde{f}\|^{2}_{L^{2}(\R^{3})}\right).
\end{align*}
From this inequality  we immediately derives as $\epsilon\to0$
$$
 W_{\epsilon}\to \overline{W}\ \ \mbox{in}\ \ L^{1}_{{\rm loc}}(\R^{3}),\ \ \mbox{and}\ \ W_{\epsilon}\rightharpoonup \overline{W}\ \ \mbox{in}\ \ H^{1}(\R^{3}).
 $$
Besides,
 $$
 W_{\epsilon}\rightarrow W\triangleq |x|E_{k}\ \  \mbox{a.e}\  x\ \ \mbox{as}\ \  \epsilon\to0,
$$
thus, $W=\overline{W}$ by the uniqueness.
This further implies that  $x\cdot\nabla E_{k}\in L^{2}(\R^{3})$ by weak lower continuous. Thus,
$$
-\Delta E_{k}+\nabla P_{k}=x\cdot\nabla E_{k}+\nabla(|x|\partial_{k}V)+\tilde{f}(x)\in L^{2}(\R^{3}).
$$
This, together the classical elliptic regularity estimates, shows $E_{k}\in H^{2}(\R^{3})\hookrightarrow L^{\infty}(\R^{3})$,
and in turn implies $|\nabla^{2}V|\leq C(1+|x|)^{-1}$.

Step 2.  Our goal is prove the following Claim:
$$|\nabla^{2}V| \leq C(1+|x|)^{-2}.$$
Now we rewrite the equation \eqref{E} as
\begin{align*}
-\Delta V-\frac{1}{2}(x\cdot \nabla V+V)+\nabla P=\hat{f}(x)
\end{align*}
with   $\hat{f}(x)$ defined  in \eqref{E4.7}.
Then $V_{k}\triangleq \partial_{k}V \ (k=1,2,3)$ fulfills
\begin{align*}
-\Delta V_{k}-\frac{1}{2}(x\cdot \nabla V_{k}+V_{k})+\nabla P=\partial_{k}\hat{f}(x)+V_{k}(x)\triangleq g(x).
\end{align*}
From step 1, one see $|\nabla^{2}V|\leq C(1+|x|)^{-1}$, which, together $|U_{0}|\leq C(1+|x|)^{-1}$, implies
$$
|g(x)|=|\partial_{k}\hat{f}(x)+V_{k}(x)|\leq C|\nabla V_{k}||U_{0}|\leq C(1+|x|)^{-2}.
$$
Again using Proposition \ref{L4.2}
\begin{align*}
|\nabla V_{k}|&=\Big|\int_{0}^{1}\int_{\R^{3}}\nabla\mathcal{O}(x-y,1-s)s^{-\frac{3}{2}}g\Big(\frac{y}{\sqrt{s}}\Big)\,{\rm d}y{\rm d}s\Big|\\&
\leq C\Big|\int_{0}^{1}\int_{\R^{3}}(|x-y|+\sqrt{1-s})^{-4}s^{-\frac{1}{2}}(|y|+\sqrt{s})^{-2}\,{\rm d}y{\rm d}s\Big|
\end{align*}
Using the argument as in Proposition \ref{L4.3}, we obtain
$$
|\nabla V_{k}|\leq C(1+|x|)^{-2},\ \  \mbox{i.e.} \ \ |\nabla^{2}V| \leq C(1+|x|)^{-2}.
$$

Step 3. We  finally prove the decay estimate
 $$|\nabla^{2}V| \leq C(1+|x|)^{-3+\delta}, \qquad\; \text{for}\;\;0<\delta\ll 1.$$
By the step 2, we have $|\nabla^{2}V| \leq C(1+|x|)^{-2}$. This, together $|\nabla^{2}U_{0}|\leq C(1+|x|)^{-3}$, implies
 $$
 |g(x)|\leq C(1+|x|)^{-3+\delta}, \qquad\; \text{for small} \;\delta>0,
 $$
  and then for $|x|>1$, one, by the same calculus as in Proposition \ref{L4.3}, instantly  deduces
\begin{align*}
|\nabla V_{k}|&=\left|\int_{0}^{1}\int_{\R^{3}}\nabla\mathcal{O}(x-y,1-s)s^{-\frac{3}{2}}g\left(\frac{y}{\sqrt{s}}\right)\,{\rm d}y{\rm d}s\right|\\&
\leq C\left|\int_{0}^{1}\int_{\R^{3}}(|x-y|+\sqrt{1-s})^{-3+\delta}(|y|+\sqrt{s})^{-4}\,{\rm d}y{\rm d}s\right|\\&
\leq C|x|^{-3+\delta},\quad\; \text{for}\;\; \;0<\delta\ll 1.
\end{align*}
This completes the proof of step 3.
\end{proof}
 Once the decay estimate of $D^{2}V$ is proved, we use  the special structure of the Oseen tensor to remove the  logarithmic  loss and obtain the optimal decay estimate of $V$. The idea of using the special structure of the Oseen tensor is already in the work of Brandolese et.al. \cite{Br, Br1}. However, our arguments heavily rely on the decay estimates of  second derivatives of solution. This is a major difference between our approach and the one used in \cite{Br,Br1}.

\begin{proposition}\label{prop-4.3}
Let $\alpha=1$, then
$$
|V(x)|\leq C(1+|x|)^{-3}.
$$
\end{proposition}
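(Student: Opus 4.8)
The plan is to run the argument sketched after \eqref{V-Represent}, starting from the representation \eqref{eq.fundamental}, which for $\alpha=1$ takes the form
$$V(x)=\int_{0}^{1}\int_{\R^{3}}\mathcal{O}(x-y,1-s)\,s^{-\frac{3}{2}}\{F_{0}+F_{1}(V)\}(y/\sqrt{s})\,\mathrm{d}y\,\mathrm{d}s .$$
The first observation is that among the source terms $\{F_{0}+F_{1}(V)\}=-U_{0}\cdot\nabla U_{0}-(U_{0}+V)\cdot\nabla V-V\cdot\nabla U_{0}$, only $U_{0}\cdot\nabla U_{0}$ decays as slowly as $(1+|\cdot|)^{-3}$: by Lemma \ref{Lfreeterm} and the bound $|\nabla V|\leq C(1+|x|)^{-3}\log(2+|x|)$ of Proposition \ref{prop-4.2} (together with $|V|\leq C(1+|x|)^{-2}$ from Proposition \ref{L4.3}), the remaining terms are $O\big((1+|x|)^{-4}\log(2+|x|)\big)$. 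Hence it suffices to obtain the optimal bound for the contribution of $U_{0}\cdot\nabla U_{0}$, the faster-decaying terms being harmless in every zone below.

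Next I would split the inner integral into the three zones $|y|>2|x|$, $|x|/2<|y|<2|x|$ and $|y|<|x|/2$, using the pointwise estimate $|\mathcal{O}(z,t)|\leq C(t^{\frac{1}{2}}+|z|)^{-3}$ from Lemma \ref{L4.1}. In the far zone $|y|>2|x|$ one has $|x-y|\sim|y|$ and the product of two $(1+|y|)^{-3}$ factors is integrable at infinity, giving $\lesssim(1+|x|)^{-3}$. In the annulus $|x|/2<|y|<2|x|$ the source is already $\lesssim|x|^{-3}$; here I would separate the part $|x-y|>|x|/4$, where the homogeneous kernel $\partial_{k}\partial_{j}\mathbb{E}$ integrates to $O(1)$ over the dyadic shell, from the near-diagonal part $|x-y|<|x|/4$, where one cannot take absolute values and must instead use the Calder\'on--Zygmund cancellation of $\partial_{k}\partial_{j}\mathbb{E}$ (its spherical mean vanishes) together with the local H\"older regularity of the source; this is exactly where the decay $|\nabla^{2}V|\leq C(1+|x|)^{-3+\delta}$ of Proposition \ref{prop-4.2} (and $|\nabla^{2}U_{0}|\leq C(1+|x|)^{-3}$ from Lemma \ref{Lfreeterm}) enters. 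Both contributions are $\lesssim(1+|x|)^{-3}$.

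The heart of the matter is the near zone $|y|<|x|/2$, where $|x-y|\sim|x|$ and a crude estimate produces the spurious factor $\int_{|y|<|x|/2}(\sqrt{s}+|y|)^{-3}\,\mathrm{d}y\sim\log(|x|/\sqrt{s})$ responsible for the logarithmic loss in \eqref{add-02}. To remove it I would exploit the structure $U_{0}\cdot\nabla U_{0}=\mathrm{div}(U_{0}\otimes U_{0})$ (recall $\mathrm{div}\,U_{0}=0$) and transfer the derivative onto the kernel, so that $\mathcal{O}$ is replaced by the Oseen tensor $\Upsilon=\partial\mathcal{O}$ of Lemma \ref{C4.1}, whose homogeneous part $\mathcal{F}=\partial^{3}\mathbb{E}$ has degree $-4$ and obeys the cancellation identities \eqref{cancel}. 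With the sharp bound $|\Upsilon(z,t)|\leq C(t^{\frac{1}{2}}+|z|)^{-4}$ the inner integral improves to $\int_{|y|<|x|/2}(\sqrt{s}+|y|)^{-2}\,\mathrm{d}y\sim|x|$, only linear in $|x|$, and after integrating the kernel factor $(\sqrt{1-s}+|x|)^{-4}$ in $s$ one gets $|x|\cdot|x|^{-4}=|x|^{-3}$ with no logarithm; the boundary term generated on $\{|y|=|x|/2\}$ is directly of size $(1+|x|)^{-3}$. The moment conditions \eqref{cancel} are what guarantee that the transferred kernel still produces the optimal power, while the second-order decay of $V$ from Proposition \ref{prop-4.2} is what makes the source regular enough to justify the integration by parts and to bound the remainder. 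Collecting the three zones then yields $|V(x)|\leq C(1+|x|)^{-3}$.

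The step I expect to be the main obstacle is precisely this near-zone/near-diagonal analysis: controlling the singular (Calder\'on--Zygmund) part of the Oseen kernel pointwise, which cannot be done by absolute values and forces one to combine the cancellation \eqref{cancel} of the Oseen tensor with the decay of $\nabla^{2}V$. Everything else reduces to the elementary zone estimates above and to the already-established facts $|V|\leq C(1+|x|)^{-2}$, $|\nabla V|\leq C(1+|x|)^{-3}\log(2+|x|)$ and $|\nabla^{2}V|\leq C(1+|x|)^{-3+\delta}$.
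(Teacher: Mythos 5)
Your strategy is sound and would produce the optimal bound, but it is a genuinely different route from the paper's. The paper never isolates $U_{0}\cdot\nabla U_{0}$: it keeps the full tensor $f=V\otimes V+U_{0}\otimes V+V\otimes U_{0}+U_{0}\otimes U_{0}$ in divergence form throughout, so the kernel is $\Upsilon=\partial\mathcal{O}$ from the outset and the three zones are defined by $|y-x|$ rather than by $|y|$; the hard region is then the near-diagonal ball $|y-x|\le |x|/2$, where the source only decays like $|x|^{-2}$. The cancellation deployed there is also different from yours: the paper performs a second-order Taylor expansion of the source about $x$, kills the zeroth-order term using $\int_{\R^{3}}\Upsilon(\cdot,1-s)\,\mathrm{d}y=0$, kills the first-order term using the vanishing first spherical moments \eqref{cancel} of the homogeneous part $\mathcal{F}=\partial^{3}\mathbb{E}$ of $\Upsilon$ (Lemma \ref{C4.1}), and controls the Taylor remainder by $|\nabla^{2}f|\le C(1+|x|)^{-4}$ --- which is exactly where the bound $|\nabla^{2}V|\le C(1+|x|)^{-3+\delta}$ of Proposition \ref{prop-4.2} is indispensable, since $\nabla^{2}f$ contains $V\otimes\nabla^{2}V$. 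Your route instead uses the vanishing spherical mean of the second-order kernel $\partial_{k}\partial_{j}\mathbb{E}$ against the increment of the source near the diagonal, which after your preliminary reduction only requires $|\nabla^{2}U_{0}|\le C(1+|x|)^{-3}$ from Lemma \ref{Lfreeterm}; in fact, once you have reduced to $U_{0}\cdot\nabla U_{0}$ you never truly need the decay of $\nabla^{2}V$, so your two invocations of it (to ``justify the integration by parts'' and in the near zone) are superfluous --- $U_{0}$ is smooth, and after transferring the divergence onto the kernel in $\{|y|<|x|/2\}$ plain absolute values already give $|x|^{-4}\cdot|x|=|x|^{-3}$ there, with no need for the moment conditions \eqref{cancel} at that point either. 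What your approach buys is a lighter input (essentially only $|\nabla V|\le C(1+|x|)^{-3}\log(2+|x|)$ and the pointwise theory of $U_{0}$); what the paper's approach buys is uniformity, since a single Taylor-plus-moment mechanism handles the whole tensor $f$ at once without a case-by-case reduction, at the price of first establishing the second-derivative decay of $V$.
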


\begin{proof}
Due to the fact  $|U_{0}(x)|\leq C(1+|x|)^{-1}$, we derive
$$
|f(x)|=\Big|\Big(V\otimes V+U_{0}\otimes V+V\otimes U_{0}+U_{0}\otimes U_{0}\Big)(x)\Big|\leq C (1+|x|)^{-2}.
$$
This inequality implies that
$$
\bar{f}(x,s)\triangleq |s^{-1}f(x/s^{\frac{1}{2}})|\leq C(s^{\frac{1}{2}}+|x|)^{-2}.
$$
Now we decompose $V(x)$ as following:
\begin{align*}
V(x)=&\int_{0}^{1}\int_{|y-x|\geq2|x|}\Upsilon(x-y,1-s)\bar{f}(y,s)\,{\rm d}y{\rm d}s\\&
+\int_{0}^{1}\int_{\frac{|x|}{2}\leq|y-x|<2|x|}\Upsilon(x-y,1-s)\bar{f}(y,s){\rm d}y{\rm d}s\\&
+\int_{0}^{1}\int_{|y-x|\leq\frac{|x|}{2}}\Upsilon(x-y,1-s)\bar{f}(y,s)\,{\rm d}y{\rm d}s\\
\triangleq& {\rm I+II+III}.
\end{align*}
In the following, we always suppose $|x|>1$.  The first and the second term are easy to handle. Indeed,
note that
$$
|\bar{f}(y,s)|\leq C |y|^{-2}\leq C|x|^{-2}, \ \ \ y\in \{y: |y-x|\geq 2|x|\}
$$
and
$$
\quad |\Upsilon(x-y,1-s)|\leq C|x-y|^{-4}\leq C|x|^{-4},\ \ y\in \Big\{y:\frac{|x|}{2}\leq|y-x|<2|x|\Big\}.
$$
 From the above inequalities, it is easy to see that
\begin{align*}
{\rm I}\leq C|x|^{-2}\int_{0}^{1}\int_{2|x|}^{\infty}r^{-2}{\rm d}r\leq C|x|^{-3},
\end{align*}
and
\begin{align*}
\quad \quad \quad \quad \quad\quad {\rm II}\leq C|x|^{-4}\int_{0}^{1}\int_{\frac{|x|}{2}\leq|y-x|<{2|x|}}|y|^{-2}\,{\rm d}y\,{\rm d}s\leq C|x|^{-3}.
\end{align*}
 In order to prevent the logarithmic  loss,  we need to carefully deal with the last term ${\rm III}$. For this purpose,
we split it into
\begin{align*}
{\rm III}=&\int_{0}^{1}\int_{|y|<\frac{|x|}{2}}\Upsilon(y,1-s)\Big\{\bar{f}(x-y,s)-\tilde{f}(x,s)+y\cdot\nabla\bar{f}(x,s)\Big\}\,{\rm d}y{\rm d}s\\&
+\int_{0}^{1}\int_{|y|<\frac{|x|}{2}}\Upsilon(y,1-s)\bar{f}(x,s)\,{\rm d}y{\rm d}s-\int_{0}^{1}\int_{|y|<\frac{|x|}{2}}\Upsilon(y,1-s)y\cdot\nabla\bar{f}(x,s)\,{\rm d}y{\rm d}s\\
\triangleq& {\rm III_{1}+III_{2}+III_{3}}.
\end{align*}
From Gauss-Green formula,
$$
\int_{\mathbb{B}_{R}}\partial_{h}\mathcal{O}(y,1-s)\,{\rm d}y=\int_{\partial\mathbb{B}_{R}}\mathcal{O}(y,1-s)\frac{y_{h}}{R}\,{\rm d}\mathbb{S}=O(R^{-1}), \ \ \mbox{for}\ \ s\in [0,1)
$$
Thus, for $s\in[0,1)$
$$
\int_{{\R^{3}}}\Upsilon(y,1-s)\,{\rm d}y=\lim_{R\to\infty}\int_{\mathbb{B}_{R}}\partial_{h}\mathcal{O}(y,1-s)\,{\rm d}y=0,
$$
 from which, one instantly  obtains
\begin{align*}
|{\rm III_{2}}|&=\Big|{\lim_{\epsilon\to0^{+}}}\int_{0}^{1-\epsilon}\int_{|y|\leq\frac{|x|}{2}}\Upsilon(y,1-s)\bar{f}(x,s)\,{\rm d}y{\rm d}s\Big|\\&=
\Big|{\lim_{\epsilon\to0^{+}}}\int_{0}^{1-\epsilon}\int_{|y|\geq\frac{|x|}{2}}\Upsilon(y,1-s)\bar{f}(x,s)\,{\rm d}y{\rm d}s\Big|\\
&\leq C|x|^{-2}\int_{0}^{1}\int_{\frac{|x|}{2}}^{\infty}|y|^{-4}\,{\rm d}y{\rm d}s\leq C|x|^{-3}.
\end{align*}
Now, we estimate ${\rm III_{1}}$.  It is easy to see that there exists  $0<\theta<1$ such that
\begin{align*}
{\rm III_{1}}\leq\int_{0}^{1}\int_{|y|\leq \frac{|x|}{2}}|y|^{2}|\nabla^{2}\bar{f}(\theta(x-y)+(1-\theta)x,s)||\Upsilon(y,1-s)|\,{\rm d}y{\rm d}s.
\end{align*}
 On the other hand, we have from Proposition \ref{prop-4.2}
$$
|\nabla^{2}f(x)|\leq C(|V||\nabla^{2}V|+|U_{0}||\nabla^{2}U_{0}|)\leq C(1+|x|)^{-4},
$$
 this implies
$$
|\nabla^{2}\bar{f}(\theta(x-y)+(1-\theta)x,s)|\leq C(1+|x|)^{-4}, \ \ \ \mbox{for}\ \ y\in \big\{y:|y|\leq |x|/2\big\}.
$$
Thus,
$$
{\rm III_{1}}\leq C|x|^{-4}\int_{0}^{1}\int_{|y|\leq \frac{|x|}{2}}|y|^{-2}\,{\rm d}y{\rm d}s\leq C|x|^{-3}.
$$

To obtain the optimal estimate of ${\rm III_{3}}$, we have to  borrow   the special structure
of the Oseen kernel.
Now we decompose the ${\rm III_{3}}$ as
\begin{align*}
{\rm III_{3}}=&-\int_{0}^{1}\int_{|y|\leq (1-s)^{\frac{1}{2}}}y\cdot\nabla\bar{f}(x,s)\Upsilon(y,1-s)\,{\rm d}y{\rm d}s\\&
-\int_{0}^{1}\int_{(1-s)^{\frac{1}{2}}\leq|y|\leq\frac{|x|}{2}}y\cdot\nabla\bar{f}(x,s)\mathcal{F}(y)\,{\rm d}y{\rm d}s\\&
-\int_{0}^{1}\int_{(1-s)^{\frac{1}{2}}\leq|y|\leq\frac{|x|}{2}}y\cdot\nabla\bar{f}(x,s)(1-s)^{-\frac{3}{2}}\nabla_{y}\Psi(y/(1-s)^{\frac{1}{2}})\,{\rm d}y{\rm d}s\\&
\triangleq {\rm III_{31}+III_{32}+III_{33}}.
\end{align*}
By the cancelation condition \eqref{cancel}, we immediately have
\begin{align*}
{\rm III_{32}}=-\lim_{\epsilon\to0^{+}}\int_{0}^{1-\epsilon}\int_{(1-s)^{\frac{1}{2}}\leq|y|\leq\frac{|x|}{2}}y\cdot\nabla\bar{f}(x,s)\mathcal{F}(y)\,{\rm d}y{\rm d}s=0.
\end{align*}
On the other hand, from
$$
|\Upsilon(y,1-s)|\leq C |1-s|^{-\frac{3}{2}}
$$
and
\begin{align*}
|\nabla\bar{f}(x,s)|&=|s^{-1}\nabla_{x}f(x/s^{\frac{1}{2}})|=|s^{-\frac{3}{2}}\nabla f(x/s^{\frac{1}{2}})|
\\&\leq Cs^{-3/2}|U_{0}(x/s^{1/2})||\nabla U_{0}(x/s^{1/2})|\\&
\leq C(s^{1/2}+|x|)^{-3},
\end{align*}
 one instantly derives
$$
|{\rm III_{31}}|\leq C|x|^{-3}\int_{0}^{1}\int_{|y|\leq (1-s)^{\frac{1}{2}}}(1-s)^{-1}\,{\rm d}y{\rm d}s\leq C|x|^{-3}.
$$
 Finally, by  Lemma \ref{L4.1}, we see that
$$
|\nabla\Psi_{kj}(x)|\leq C e^{-|x|^{2}}\leq C(1+|x|)^{-m}\ \ \ \mbox{for any integer}\ m.
$$
This implies that
$$
|{\rm III_{33}}|\leq C|x|^{-3}\int_{0}^{1}\int_{0}^{\infty}(1+|z|)^{-4}\,{\rm d}z{\rm d}s\leq C|x|^{-3}
$$
with $z=\frac{y}{s^{1/2}}$.
Thus, combining the above estimates, we obtain
$$
{\rm III}\leq  C|x|^{-3}.
$$
This, together estimates of I, II and the fact $V\in L^{\infty}(\R^{3})$, implies
$$
|V(x)|\leq C(1+|x|)^{-3}.
$$
\end{proof}
At last,  collecting  Proposition \ref{L4.3} and Proposition \ref{prop-4.3} yields to Theorem \ref{T1.3}.

\section*{Acknowledgments} We  thank the anonymous referee and the associated editor  for their
invaluable comments   which helped to improve the paper.
This work is supported in part by the  National Key research and development program of China  (No.2020YFAO712903) and  NNSF of China
 under grant  No.11871087, No. 11971148 and  No.11831004.

\end{document}